\renewcommand{\MR}[1]{\href{http://www.ams.org/mathscinet-getitem?mr=#1}{MR#1}}
\numberwithin{equation}{section}
\newcommand{\Span}{\mathrm{Span}}
\newcommand{\Nor}{\mathrm{Nor}}
\newcommand{\cA}{\mathcal{A}}
\newcommand{\cN}{\mathcal{N}}
\newcommand{\cT}{\mathcal{T}}
\newcommand{\cO}{\mathcal{O}}
\newcommand{\cU}{\mathcal{U}}
\newcommand{\cR}{\mathcal{R}}
\newcommand{\Lcal}{{\mathcal{L}}}
\newcommand{\Rcal}{{\mathcal{R}}}
\newcommand{\asX}{\langle X \rangle}
\newcommand{\asoX}{\langle X \rangle}
\newcommand{\asY}{\langle Y_N \rangle}
\newcommand{\la}{{\triangleright}}
\newcommand{\ra}{{\triangleleft}}
\newcommand{\N}{\mathbb{N}}
\newcommand{\Fcal}{{\mathcal{F}}}
\newcommand{\Dcal}{{\mathcal{D}}}
\newcommand{\Ocal}{{\mathcal{O}}}
\newcommand{\Sym}{\mathop{{\rm Sym}}\nolimits}
\def\id{\text{id}}
\theoremstyle{plain}
\newtheorem{thm}{Theorem}[section]
\newtheorem{pro}[thm]{Proposition}
\newtheorem{lem}[thm]{Lemma}
\newtheorem{cor}[thm]{Corollary}
\theoremstyle{definition}
\newtheorem{ex}[thm]{Example}
\newtheorem{fact}[thm]{Fact}
\newtheorem{defnotation}[thm]{Definition-Notation}
\newtheorem{convention}[thm]{Convention}
\newtheorem{dfn}[thm]{Definition}
\newtheorem{rmk}[thm]{Remark}
\newtheorem{problem}[thm]{Problems}
\newcommand{\LM}{\mathbf{LM}}
\title[Veronese subalgebras of Yang-Baxter algebras]{Veronese subalgebras and
Veronese morphisms for a class of  Yang-Baxter algebras}
\keywords{Quadratic algebras, PBW algebras, Veronese subalgebras, Veronese maps,
Yang-Baxter equation, Braided monoids}
\subjclass{Primary 16T25, 16S37, 16S38,  14A22, 16S38, 16S15, 16T20, 81R50}
\author{Tatiana Gateva-Ivanova}
\address{Max Planck Institute for Mathematics, Vivatsgasse 7, 53111 Bonn, Germany,
and American University in
Bulgaria, 2700 Blagoevgrad, Bulgaria} \email{tatyana@aubg.edu}
\thanks{Partially supported by the Max Planck Institute for Mathematics, (MPIM),
Bonn,
by the Abdus Salam International Centre for Theoretical Physics (ICTP), Trieste, and
by Grant KP-06 N 32/1, 07.12.2019 of the Bulgarian National Science Fund.
}
\begin{document}
\date{\today}

\begin{abstract}
We study $d$-Veronese subalgebras $\cA^{(d)}$ of Yang-Baxter algebras $\cA_X=
\cA(\textbf{k}, X, r)$ related to finite nondegenerate involutive set-theoretic
solutions $(X, r)$ of the Yang-Baxter equation, where $\textbf{k}$ is a
field and $d\geq 2$ is an integer. We find an explicit presentation of the $d$-Veronese $\cA^{(d)}$ in
terms of one-generators and quadratic relations. We introduce the notion of
\emph{a $d$-Veronese solution $(Y, r_Y)$}, canonically associated to $(X,r)$
and use its Yang-Baxter algebra $\cA_Y= \cA(\textbf{k}, Y, r_Y)$  to define a
Veronese morphism $v_{n,d}:\cA_Y  \rightarrow \cA_X $. We prove that the image
of $v_{n,d}$  is  the $d$-Veronese subalgebra $\cA^{(d)}$, and find explicitly a
minimal set of generators for its kernel.
The results agree with their classical analogues in the commutative case.
We show that the Yang-Baxter algebra $\cA(\textbf{k}, X, r)$ is a PBW algebra if and only if $(X,r)$ is a square-free solution. In this case the $d$-Veronese $A^{(d)}$ is also a PBW algebra.
   \end{abstract}
\maketitle

\section{Introduction}
\label{Intro}

 It was established in the last three decades that
solutions of the linear braid or Yang-Baxter equation (YBE)
\[r^{12}r^{23}r^{12}=r^{23}r^{12} r^{23} \]
on
a vector space of the form $V^{\otimes 3}$ lead to remarkable
algebraic structures. Here $r : V\otimes V \longrightarrow
V\otimes V,$  $r^{12}= r\otimes \id$, $r^{23} = id\otimes r$ is a
linear authomorphism and structures include coquasitriangular bialgebras
$A(r)$, their quantum group (Hopf algebra) quotients, quantum
planes and associated objects, at least in the case of specific
standard solutions, see \cite{MajidQG, RTF}. On the other hand,
the variety of all solutions on vector spaces of a given dimension
has remained rather elusive in any degree of generality. It was
proposed by V.G. Drinfeld \cite{D}, to consider the same equations
in the category of sets, and in this setting numerous  results
were found. It is clear that a set-theoretic solution extends to a
linear one, but more important than this is that set-theoretic
solutions lead to their own remarkable algebraic and combinatoric
structures, only somewhat analogous to quantum group
constructions. In the present paper we continue our systematic
study of set-theoretic solutions based on the associated quadratic
algebras and  monoids that they generate.

The study of non-commutative algebras defined by quadratic relations as examples of quantum non-commutative spaces has received considerable impetus from the seminal work of Faddev, Reshetikhin and Takhtadjan,\cite{FRT}, where the authors considered general deformations of quantum groups and spaces arising from an R-matrix, and from Manin's programme for non-commutative geometry \cite{Ma91}.  The quadratic algebras related to set-theoretic solutions of the Yang-Baxter equation studied here can be considered as special quantum non-commutative spaces important for both noncommutative algebra and non-commutative algebraic geometry, as they provide a rich source of examples of interesting associative algebras and non-commutative spaces some of which are Artin-Schelter regular algebras.
Our work is motivated by the relevance of those algebras for non-commutative geometry,  especially in relation to the theory of quantum groups, and inspired by the interpretation of morphisms between non-commutative algebras as "maps between non-commutative spaces". In
\cite{GI23} and the present paper we consider non-commutative analogues of the Veronese and Segre embeddings, two fundamental maps that play pivotal roles not only in classical algebraic geometry but also in applications to other fields of mathematics.


In this paper "a solution of YBE" , or shortly, "a solution" means "a
nondegenerate involutive set-theoretic solution of YBE", see Definition
\ref{def:quadraticsets_All}.

The  Yang-Baxter algebras $\cA_X= \cA(\textbf{k}, X, r)$ related to solutions $(X,
r)$ of finite order $n$
 will play a central role in the paper.
 It was proven in \cite{GIVB} and \cite{GI12}
that the  quadratic algebra $\cA_X$ of every finite solution $(X,r)$ of YBE has remarkable algebraic, homological and
combinatorial properties. In general, the algebra $\cA_X$ is noncommutative and in most cases it is not even a PBW algebra, but it preserves various good
properties of the commutative polynomial ring $\textbf{k} [x_1, \cdots , x_n]$:
$\cA_X$ has finite global dimension and polynomial growth, it is Cohen-Macaulay,
Koszul, and a Noetherian domain.


There are close relations between various combinatorial properties of the solution $(X,r)$ and the properties of the corresponding Yang-Baxter algebra $\cA_X$, see for example \cite{GI04, GI04s, rump, GIM11, GI12, GI21, vendramin}.
In the special case when $(X,r)$ is \emph{a finite nondegenerate involutive square-free} \emph{quadratic set}
whose quadratic algebra
$\cA_X= \cA(\textbf{k}, X, r)$ has a $\textbf{k}$-basis of Poincar\'{e}-Birkhoff-Witt type,  the conditions "\emph{$\cA$ is an Artin-Schelter regular algebra"} and "\emph{$(X,r)$ is a solution of YBE}"
\emph{are equivalent}, see details in Section 2.
The study of Artin-Schelter regular algebras is a central problem for noncommutative algebraic geometry.


A first stage of noncommutative geometry on quadratic algebras $\cA_X= \cA(\textbf{k}, X, r)$  was
proposed in  \cite{GIM11}, Section 6, where the quantum spaces under investigation are
Yang-Baxter algebras $\cA(\textbf{k}, X, r)$  associated to  multipermutation
(square-free) solutions of
level two.
In \cite{AGG} a class of quadratic PBW
algebras called "noncommutative projective spaces" were investigated
and analogues of  Veronese and Segre morphisms between noncommutative projective spaces were
introduced and studied. It is natural to formulate similar problems for the class of Yang-Baxter algebras $\cA= \cA(\textbf{k}, X, r)$ related to finite solutions $(X,r)$, but to find reasonable solutions of these problems is a nontrivial task. In contrast with \cite{AGG}, where the "noncommutative projective spaces" under investigation have almost commutative quadratic relations which form Gr\"{o}bner bases, and the main results follow naturally from the theory of Noncommutative Gr\"{o}bner bases, the Yang-Baxter algebras $\cA= \cA(\textbf{k}, X, r)$ have complicated quadratic relations, which in most cases do not form Gr\"{o}bner bases. These relations remain complicated even when $\cA$ is a PBW algebra, so we need more sophisticated arguments and techniques, see for example \cite{GI23}.

In the present paper we consider the following problems.
\begin{problem}
\label{problem}
\begin{enumerate}
Suppose $(X,r)$ is a finite solution of YBE with $|X|= n$, and
$\cA=\cA(\textbf{k}, X, r)$ is its Yang-Baxter algebra.
\item  Find necessary and sufficient conditions on $(X,r)$ such that there exists an enumeration $X=\{x_1, \cdots, x_n\}$, so that $\cA$ is a
PBW algebra with a set of PBW generators  $x_1, \cdots, x_n$.
\item  Let $d\geq 2$ be an integer.
Find a presentation of the $d$-Veronese subalgebra $\cA^{(d)}$ of its Yang-Baxter algebra
$\cA$ in terms of one-generators and quadratic relations.
\item Introduce analogues of Veronese maps for the class
of Yang-Baxter algebras of finite solutions of YBE.
\item Anser questions (2) and (3) in
 the special case when $(X,r)$ is a square-free solution.
\end{enumerate}
\end{problem}
Our main results are Theorem \ref{thm:PBWmain},
Theorem  \ref{thm:d-Veronese_relations}, Theorem
\ref{thm:Veronese_ker}.
which solve completely problems (1), (2), and (3). We give a complete answer to (4) in Section \ref{sec:square-free}.

The paper is organized as follows. In Section 2 we recall basic definitions and
facts used throughout the paper.
In Section \ref{sec:YBalgebras} we consider the Yang-Baxter algebra
$\cA_X= \cA(\textbf{k},  X, r)$ of a finite nondegenerate
solution $(X,r)$.
We fix the main settings and conventions and recall some of the most important
properties of the Yang-Baxter algebras
$\cA_X$  used throughout the paper.
The main result of the section is Theorem \ref{thm:PBWmain}  which shows that the Yang-Baxter algebra $\cA(\textbf{k}, X, r)$ is PBW with respect to some proper enumeration of $X$ \emph{iff}
the solution $(X,r)$ is square-free. Proposition  \ref{thm:PBW} gives more information on a special case of PBW quadratic algebras.
In Section \ref{sec:dVeronese} we study the $d$-Veronese subalgebra $\cA^{(d)}$ of $\cA=
\cA(\textbf{k}, X, r)$.
We use the fact that the algebra $\cA$ and its Veronese subalgebras are intimately
connected with the braided monoid $S(X,r)$. To solve the main  problem we
introduce successively  three finite isomorphic solutions associated naturally to
$(X,r)$,  and
involved in the proof of our results.
The first and the most natural of the three is \emph{the monomial
$d$-Veronese solution $(S_d, r_d)$ associated with} $(X,r)$. It is a finite solution induced from
the graded braided monoid $(S, r_S)$ which depends only on the map $r$ and on $d$.
The monomial $d$-Veronese solution  is intimately connected with the $d$-Veronese subalgebra
$\cA^{(d)}$ and its quadratic relations, but it is not convenient for an explicit
description of the relations. This solution is needed to
 define  \emph{the normalised $d$-Veronese solution $(\cN_d, \rho_d)$} isomorphic to $(S_d, r_d)$,
see Definition \ref{def:normalizedSol}. The solution $(\cN_d, \rho_d)$ is central for the proof of the main result
Theorem  \ref{thm:d-Veronese_relations}.
In Section \ref{sec:Veronesemap}
we introduce and study analogues of Veronese maps between Yang-Baxter algebras of
finite solutions and prove Theorem
\ref{thm:Veronese_ker}.
In Section \ref{sec:square-free} we consider two special cases of solutions. We pay special attention to
Yang-Baxter algebras $\cA= \cA(\textbf{k}, X, r)$
of square-free solutions $(X,r)$ and their Veronese subalgebras. In this case
$\cA$ is a binomial skew polynomial ring and  the set of ordered monomials (terms) in $n$ variables forms an explicit $\textbf{k}$- basis of $\cA$.
Theorem \ref{thm:d-Veronese_relations} implies a more precise result in this case:
Corollary \ref{cor:d-Veronese_relations_square-free}
shows that the $d$-Veronese $A^{(d)}$ is a PBW algebra, where the terms of length $d$ ordered lexicographically are its PBW generators and its  relations given explicitly form a quadratic Gr\"{o}bner basis.
An important result in this section is
Theorem \ref{thm:Sd_square-free} which shows that if $(X,r)$ is a finite
square-free solution and $d \geq 2$ is an integer, then the monomial $d$-Veronese
solution
$(S_d, r_d)$  is
square-free if and only if $(X,r)$ is a trivial solution.
This implies that the notion of Veronese morphisms introduced for the class of
Yang-Baxter algebras of finite solutions
can not be restricted to the subclass of Yang-Baxter algebras associated to finite square-free
solutions.
In Section \ref{sec:examples} we present two examples which illustrate the
results of the paper.
\section{Preliminaries}
\label{seq:preliminaries}
Let $X$ be a non-empty set, and let $\textbf{k}$ be a field.
We denote by $\asX$
the free monoid
generated by $X,$ where the unit is the empty word denoted by $1$, and by
$\textbf{k}\asX$-the unital free associative $\textbf{k}$-algebra
generated by $X$. For a non-empty set $F
\subseteq \textbf{k}\asX$, $(F)$ denotes the two sided ideal
of $\textbf{k}\asX$ generated by $F$.
When the set $X$ is finite, with $|X|=n$,  and ordered, we write $X= \{x_1,
\dots, x_n\},$ and fix the degree-lexicographic order $<$ on $\asX$, where $x_1< \dots
<x_n$.
As usual, $\N$ denotes the set of all positive integers,
and $\N_0$ is the set of all non-negative integers.

We shall consider associative graded $\textbf{k}$-algebras.
Suppose $A= \bigoplus_{m\in\N_0}  A_m$ is a graded $\textbf{k}$-algebra
such that $A_0 =\textbf{k}$, $A_pA_q \subseteq A_{p+q}, p, q \in \N_0$, and such
that $A$ is finitely generated by elements of positive degree. Recall that its
Hilbert function is $h_A(m)=\dim A_m$
and its Hilbert series is the formal series $H_A(t) =\sum_{m\in\N_0}h_{A}(m)t^m$.
In particular, the algebra $\textbf{k} [X]$ of commutative polynomials satisfies
\begin{equation}
\label{eq:hilbert}
h_{\textbf{k} [X]}(d)= \binom{n+d-1}{d}= \binom{n+d-1}{n-1} \quad\mbox{and}\quad
H_{\textbf{k} [X]}= \frac{1}{(1 -t)^{n}}.
\end{equation}
We shall use the \emph{natural grading by length} on the free associative algebra
$\textbf{k}\asX$.
For $m \geq 1$,  $X^m$ will denote the set of all words of length $m$ in $\asX$,
where the length of $u = x_{i_1}\cdots x_{i_m} \in X^m$
will be denoted by $|u|= m$.
Then
\[\asX = \bigsqcup_{m\in\N_0}  X^{m},\;
X^0 = \{1\},\;  \mbox{and} \ \   X^{k}X^{m} \subseteq X^{k+m},\]
so the free monoid $\asX$ is naturally \emph{graded by length}.

Similarly, the free associative algebra $\textbf{k}\asX$ is also graded by
length:
\[\textbf{k}\asX
 = \bigoplus_{m\in\N_0} \textbf{k}\asX_m,\quad \mbox{ where}\ \
 \textbf{k}\asX_m=\textbf{k} X^{m}. \]

A polynomial $f\in  \textbf{k}\asX$ is \emph{homogeneous of degree $m$} if $f \in
\textbf{k} X^{m}$.
We denote by
\begin{equation}
\label{eq:terms}
\cT =\cT(X) :=\left\lbrace x_1^{\alpha_1}\cdots x_n^{\alpha_n}\in\asX \ \vert
\ \alpha_i\in\N_0, i\in\{0,\dots,n\}\right\rbrace
\end{equation}
the set of ordered monomials (terms) in $\asX$   and by
\[
\cT_d = \cT(X)_d:=\left\lbrace x_1^{\alpha_1}\cdots x_n^{\alpha_n}\in\cT \mid\
\sum_{i=1}^n \alpha_i = d \right\rbrace
\]
the set of ordered monomials of length $d$.

\subsection{Gr\"obner bases for ideals in the free associative algebra}
\label{sec:grobner}
We shall remind some basics of noncommutative Gr\"obner bases theory which we use throughout in the paper. In this subsection $X=
\{x_1,\dotsc,x_n\}$, we fix the degree lexigographic order $<$ on the free monoid $\asX$ extending $x_1 < x_2< \cdots <x_n$ (we refer to it as "deg-lex ordering").
Suppose $f \in \textbf{ k}\asX$ is a nonzero polynomial. Its leading
monomial with respect to the deg-lex order $<$ will be denoted by
$\LM(f)$.
One has $\LM(f)= u$ if
$f = cu + \sum_{1 \leq i\leq m} c_i u_i$, where
$ c, c_i \in \textbf{k}$, $c \neq 0 $ and $u > u_i$ in $\asX$, for all
$i\in\{1,\dots,m\}$.
Given a set $F \subseteq \textbf{k} \asX$ of
non-commutative polynomials, we consider the set of leading monomials
 $\LM(F) = \{\LM(f) \mid f \in F\}.
 $
A monomial $u\in \asX$ is \emph{normal modulo $F$} if it does not contain any of
the monomials $\LM(f)$ as a subword.
 The set of all normal monomials modulo $F$ is denoted by $N(F)$.

Let  $I$ be a two sided graded ideal in $K \asX$ and let $I_m = I\cap
\textbf{k}X^m$.
We shall
assume that
$I$ \emph{is generated by homogeneous polynomials of degree $\geq 2$}
and $I = \bigoplus_{m\ge 2}I_m$. Then the quotient
algebra $A = \textbf{k} \asX/ I$ is finitely generated and inherits its
grading $A=\bigoplus_{m\in\N_0}A_m$ from $ \textbf{k} \asX$. We shall work with
the so-called \emph{normal} $\textbf{k}$-\emph{basis of} $A$.
We say that a monomial $u \in \asX$ is  \emph{normal modulo $I$} if it is normal
modulo $\LM(I)$. We set
$N(I):=N(\LM(I))$.
In particular, the free
monoid $\asX$ splits as a disjoint union
\begin{equation}
\label{eq:X1eq2a}
\asX=  N(I)\sqcup \LM(I).
\end{equation}
The free associative algebra $\textbf{k} \asX$ splits as a direct sum of
$\textbf{k}$-vector
  subspaces
  \[\textbf{k} \asX \simeq  \Span_{\textbf{k}} N(I)\oplus I,\]
and there is an isomorphism of vector spaces
$A \simeq \Span_{\textbf{k}} N(I).$

It follows that every $f \in \textbf{k}\asX$ can be written uniquely as $f =
h+f_0,$ where $h \in I$ and $f_0\in {\textbf{k}} N(I)$.
The element $f_0$ is called \emph{the normal form of $f$ (modulo $I$)} and denoted
by
$\Nor(f)$
We define
\[N(I)_{m}=\{u\in N(I)\mid u\mbox{ has length } m\}.\]
In particular, $N(I)_1 =X, N(I)_0=1$. Then
$A_m \simeq \Span_{\textbf{k}} N(I)_{m}$ for every $m\in\N_0$.

A subset
$G \subseteq I$
of monic polynomials is a \emph{Gr\"{o}bner
basis} of $I$ (with respect to the order $<$) if
\begin{enumerate}
\item $G$ generates $I$ as a
two-sided ideal, and
\item for every $f \in I$ there exists $g \in G$ such that $\LM(g)$ is a
    subword of $\LM(f)$, that is
$\LM(f) = a\LM(g)b$,  for some $a, b \in \asX$.
\end{enumerate}
A  Gr\"{o}bner basis $G$ of
$I$ is \emph{reduced} if (i)  the set $G\setminus\{f\}$ is not a Gr\"{o}bner
basis of $I$, whenever $f \in G$; (ii) each  $f \in G$  is a linear combination
of normal monomials modulo $G\setminus\{f\}$.

It is well-known that every ideal $I$ of $\textbf{k} \asX$ has a unique reduced
Gr\"{o}bner basis $G_0= G_0(I)$ with respect to $<$. However, $G_0$ may be
infinite.
 For more details, we refer the reader to \cite{Latyshev,Mo88, Mo94}.

The set of leading monomials  of the reduced Gr\"{o}bner basis $G_0= G_0(I)$
\begin{equation}
\label{eq:obstructions}
W =\{LM(f) \mid  f \in G_0(I)\}
\end{equation}
is \emph{the set of obstructions} for $A= \textbf{k} \asX/ I$, in the sense of Anick, \cite{Anickmon}.
There are equalities of sets  $N(I) = N(G_0) = N(W).$ We shall use the set of obstructions for the proof of Theorem \ref{thm:PBWmain}.

Bergman's Diamond lemma  \cite[Theorem 1.2]{Bergman} implies the following.
\begin{rmk}
\label{rmk:diamondlemma}
Let $G  \subset \textbf{k}\asX$  be a set  of noncommutative polynomials. Let $I =
(G)$ and let $A = \textbf{k}\asX/I.$ Then the following
conditions are equivalent.
\begin{enumerate}
\item
The set
$G$  is a Gr\"{o}bner basis of $I$.

\item Every element $f\in \textbf{k}\asX$ has a unique normal form modulo $G$,
    denoted by $\Nor_G (f)$.
\item
There is an equality $N(G) = N(I)$, so there is an isomorphism of vector spaces
\[\textbf{k}\asX \simeq I \oplus \textbf{k}N(G).\]
\item The image of $N(G)$ in $A$ is a $\textbf{k}$-basis of $A$.
In this case
$A$ can be identified with the $\textbf{k}$-vector space $\textbf{k}N(G)$, made
a $\textbf{k}$-algebra by the multiplication
$a\bullet b: = \Nor(ab).$
\end{enumerate}
\end{rmk}

 In this paper, we focus on a class of quadratic finitely presented algebras $A$
 associated with set-theoretic nondegenerate involutive solutions $(X,r)$ of
 finite order $n$. Following Yuri Manin, \cite{Ma88}, we call them \emph{Yang-Baxter algebras}.

\subsection{Quadratic algebras}
\label{sec:Quadraticalgebras}
A quadratic  algebra is an associative graded algebra
 $A=\bigoplus_{i\ge 0}A_i$ over a ground field
 $\textbf{k}$  determined by a vector space of generators $V = A_1$ and a
subspace of homogeneous quadratic relations $R= R(A) \subset V
\otimes V.$ We assume that $A$ is finitely generated, so $\dim A_1 <
\infty$. Thus $ A=T(V)/( R)$ inherits its grading from the tensor
algebra $T(V)$.

Following the classical tradition (and a recent trend), we take a
combinatorial approach to study $A$. The properties of $A$ will be
read off a presentation $A= \textbf{k} \langle X\rangle /(\Re)$,
where by convention $X$ is a fixed finite set of generators of
degree $1$, $|X|=n,$
and $(\Re)$ is the two-sided
ideal of relations, generated by a {\em finite} set $\Re$ of
homogeneous polynomials of degree two. In particular, $A_1 = V= \Span_{\textbf{k} } X$.
 \begin{dfn}
\label{def:PBW}
A quadratic algebra $A$ is
\emph{a  Poincar\`{e}–Birkhoff–Witt type algebra} or shortly
\emph{a PBW algebra} if there exists an enumeration $X=\{x_1,
\cdots, x_n\}$ of $X,$ such that the quadratic relations $\Re$ form a
(noncommutative) Gr\"{o}bner basis with respect to the
deg-lex order $<$ on $\asX$.
In this case the set of normal monomials
(mod $\Re$) forms a $\textbf{k}$-basis of $A$ called a \emph{PBW
basis}
 and $x_1,\cdots, x_n$ (taken exactly with this enumeration) are called \emph{
 PBW-generators of $A$}.
\end{dfn}
 \emph{PBW} algebras were introduced by Priddy, \cite{priddy}.
 The \emph{PBW basis}  is a generalization of the classical
 Poincar\'{e}-Birkhoff-Witt basis for the universal enveloping of a  finite
 dimensional Lie algebra.
 PBW algebras form an important class of Koszul algebras.
  The interested reader can find information on quadratic algebras and, in
  particular, on Koszul algebras and PBW algebras in
  \cite{PoPo}.
A special class of PBW algebras important for this paper, are the
\emph{binomial skew polynomial rings} introduced and studied by the author in \cite{GI94, GI96}.
  \begin{dfn}
\label{binomialringdef}
\cite{GI96, GI94} A {\em binomial skew polynomial ring} is a quadratic algebra
 $A=\textbf{k} \langle x_1, \cdots , x_n\rangle/(\Re_0)$ with
precisely $\binom{n}{2}$ defining relations
\begin{equation}
\label{eq:skewpol}
\Re_0=\{\varphi_{ji} = x_{j}x_{i} -
c_{ij}x_{i^\prime}x_{j^\prime} \mid 1\leq i<j\leq n\}
\end{equation}
 such that
(a) $c_{ij} \in \textbf{k}^{\times}$;
(b) For every pair $i, j, \; 1\leq
i<j\leq n$, the relation $x_{j}x_{i} - c_{ij}x_{i'}x_{j'}\in \Re_0,$
satisfies $j
> i^{\prime}$, $i^{\prime} < j^{\prime}$;
(c) Every ordered monomial $x_ix_j,$
with $1 \leq i < j \leq n$ occurs (as a second term) in some
relation in $\Re_0$;
(d) $\Re_0$ is the
{\it reduced Gr\"obner basis\/} of the two-sided ideal $(\Re_0)$,
with respect to the deg-lex order $<$ on $\asX$;   or
equivalently,
(d$^{\prime}$)  The set of terms $\cT=\left\lbrace x_1^{\alpha_1}\cdots x_n^{\alpha_n}\in\asoX \ \vert
\ \alpha_i\in\N_0, i\in\{0,\dots,n\}\right\rbrace$ projects to a $\textbf{k}$-basis of $A$.
\end{dfn}
The equivalence of (d) and (d$^{\prime}$)  follows from the Diamond Lemma, see Remark \ref{rmk:diamondlemma}.

It is clear that each binomial skew polynomial ring $A$ is a PBW algebra with a set of PBW generators $x_1, \cdots , x_n$.
It was proven  in \cite{GIVB} that $A$ defines via its relations a square-free
solution of the Yang-Baxter equation. Conversely, if $(X,r)$ is a finite square-free solution, then there exists an enumeration
$X=\{x_1, x_2, \cdots, x_n\}$ such that the Yang-Baxter algebra $\cA(\textbf{k}, X,r)$ is a binomial skew polynomial ring, this follows from results of Rump,  \cite{rump}, see also details in \cite{GI04}.

\begin{ex}
\label{example1}
Let $A = \textbf{k}\langle x_1,x_2,x_3,x_4 \rangle /(\Re_0)$, where
\[
 \begin{array}{l}
\Re_0
 = \{x_4x_2 -x_1x_3,\; x_4x_1 -x_2x_3,\;x_3x_2 -x_1x_4,\; x_3x_1 -x_2x_4, \;
 x_4x_3 -x_3x_4,\;  x_2x_1 -x_1x_2
\}.
\end{array}
\]
The algebra $A$ is a binomial skew polynomial ring. It is a PBW algebra with PBW
generators $X= \{x_1, x_2, x_3, x_4\}$.
The relations of $A$ define in a natural way a solution of YBE.
 \end{ex}

\subsection{Set-theoretic solutions of the Yang-Baxter equation and their Yang-Baxter algebras}
The notion of \emph{a quadratic set} was introduced in \cite{GI04}, see also \cite{GIM08},  as a
set-theoretic analogue of a quadratic algebra.
\begin{dfn}\cite{GI04}
\label{def:quadraticsets_All} Let $X$ be a nonempty set (possibly
infinite) and let $r: X\times X \longrightarrow X\times X$ be a
bijective map. In this case we use notation $(X, r)$ and refer to
it as \emph{a quadratic set}. The image of $(x,y)$ under $r$ is
written as
\[
r(x,y)=({}^xy,x^{y}).
\]
This formula defines a ``left action'' $\Lcal: X\times X
\longrightarrow X,$ and a ``right action'' $\Rcal: X\times X
\longrightarrow X,$ on $X$ as: $\Lcal_x(y)={}^xy$, $\Rcal_y(x)=
x^{y}$, for all $x, y \in X$. (i) $(X, r)$ is \emph{non-degenerate}, if
the maps $\Lcal_x$ and $\Rcal_x$ are bijective for each $x\in X$.
(ii) $(X, r)$ is \emph{involutive} if $r^2 = id_{X\times X}$. (iii)
$(X,r)$ is \emph{square-free} if $r(x,x)=(x,x)$ for all $x\in X.$
(iv) $(X, r)$ is \emph{a set-theoretic
solution of the Yang--Baxter equation} (YBE) if  the braid
relation
\[r^{12}r^{23}r^{12} = r^{23}r^{12}r^{23}\]
holds in $X\times X\times X,$  where  $r^{12} = r\times\id_X$, and
$r^{23}=\id_X\times r$. In this case we  refer to  $(X,r)$ also as
\emph{a braided set}. (v)  A braided set $(X,r)$ with $r$ involutive is
called \emph{a symmetric set}. (vi) A nondegenerate symmetric set is called simply \emph{a solution}.

$(X,r)$ is
\emph{the trivial solution} on $X$ if $r(x,y)=(y,x)$ for all $x, y \in X$.
\end{dfn}
\begin{rmk}
    \label{rmk:YBE1}
\cite{ESS}
Let $(X,r)$ be quadratic set.
Then $r$ obeys the YBE,
that is $(X,r)$ is a braided set {\em iff} the following three conditions
hold for all $x,y,z \in X$:
\[
\begin{array}{cccc}
 {\bf l1:}&{}^x{({}^yz)}={}^{{}^xy}{({}^{x^y}{z})},
 \quad\quad
 {\bf r1:}&
{(x^y)}^z=(x^{{}^yz})^{y^z},
 \quad \quad{\rm\bf lr3:}&
{({}^xy)}^{({}^{x^y}{z})} \ = \ {}^{(x^{{}^yz})}{(y^z)}.
\end{array}\]
The map $r$ is involutive {\em iff}
\[{\bf inv:}\quad {}^{{}^xy}{(x^y)} = x, \; \text{and}\; ({}^xy)^{x^y}=y.\]
\end{rmk}

\begin{convention}
\label{conv:convention1} In this paper
"\emph{A solution of YBE}", or simply "\emph{a solution}"
means "\emph{a non-degenerate symmetric set}" $(X,r)$, where $X$  is
a set of arbitrary cardinality.
\end{convention}
As a notational tool, we  shall  identify the sets $X^{\times
m}$ of ordered $m$-tuples, $m \geq 2,$  and $X^m,$ the set of all
monomials of length $m$ in the free monoid $\asX$. We shall use also notation $\cdot r(x,y):= xy$. Sometimes for simplicity we
 shall write $r(xy)$ instead of $r(x,y)$.

\begin{dfn} \cite{GI04, GIM08}
\label{def:algobjects} To each quadratic set $(X,r)$ we associate
canonically algebraic objects generated by $X$ and with quadratic
relations $\Re =\Re(r)$ naturally determined as
\[
xy=y^{\prime} x^{\prime}\in \Re(r)\; \text{iff}\;
 r(x,y) = (y^{\prime}, x^{\prime})\; \text{and} \;
 (x,y) \neq (y^{\prime}, x^{\prime})\;\text{hold in}\;X \times X.
\]
 The monoid
$S =S(X, r) = \langle X ; \; \Re(r) \rangle$
 with a set of generators $X$ and a set of defining relations $ \Re(r)$ is
called \emph{the monoid associated with $(X, r)$}.
 The \emph{group $G=G(X, r)= G_X$ associated with} $(X, r)$ is
defined analogously.
For an arbitrary fixed field $\textbf{k}$,
\emph{the} \textbf{k}-\emph{algebra associated with} $(X ,r)$ is
defined as
\[\begin{array}{c}
\cA = \cA(\textbf{k},X,r) = \textbf{k}\langle X  \rangle /(\Re_{\cA})
\simeq \textbf{k}\langle X ; \;\Re(r)
\rangle,\;\text{where}\;
\Re_{\cA} = \{xy-y^{\prime}x^{\prime}\mid xy=y^{\prime}x^{\prime}\in \Re
(r) \}.
\end{array}
\]
Clearly, $\cA$ is a quadratic algebra generated by $X$ and
 with defining relations  $\Re_{\cA}$, which is
isomorphic to the monoid algebra $\textbf{k}S(X, r)$.

When $(X,r)$ is a solution of YBE,  the
algebra $\cA$ is called \emph{an Yang-Baxter algebra}, \cite{Ma88},or shortly \emph{YB algebra}.
\end{dfn}

Suppose $(X,r)$ is a finite quadratic set. Then
$\cA=  \cA(\textbf{k},X,r)$ is \emph{a connected
graded} $\textbf{k}$-algebra (naturally graded by length),
 $\cA=\bigoplus_{i\ge0}\cA_i$, where
$\cA_0=\textbf{k},$
and each graded component $\cA_i$ is finite dimensional.
Moreover, the associated monoid $S= S(X,r)$ \emph{is naturally graded by
length}:
\begin{equation}
\label{eq:Sgraded}
S = \bigsqcup_{i \geq 0}  S_{i}, \;\; \text{where}\;\;
S_0 = 1,\; S_1 = X,\; S_i = \{u \in S \mid\;   |u|= i \}, \; S_i.S_j \subseteq
S_{i+j}.
\end{equation}
In the sequel, by "\emph{a graded monoid} $S$", we shall
mean that $S$ is generated  by $S_1=X$ and graded by length.
The grading of $S$ induces a canonical grading of its monoid algebra
$\textbf{k}S(X, r).$
The isomorphism $\cA\cong \textbf{k}S(X, r)$ agrees with the canonical gradings,
so there is an isomorphism of vector spaces $\cA_m \cong Span_{\textbf{k}}S_m$.

 If
$(X,r)$ is a nondegenerate  involutive quadratic set of finite order $|X| =n$
then, by \cite[Proposition 2.3.]{GI11},  the set $\Re$
consists of precisely $\binom{n}{2}$ quadratic relations.
In this  case the associated algebra $\cA= \cA(\textbf{k}, X, r)$ satisfies
$\dim \cA_2 = \binom{n+1}{2}.$

\begin{defnotation}\cite{GI21}
\label{def:fixedpts}
Suppose $(X,r)$ is an involutive quadratic set. Then the cyclic group $\langle r \rangle =\{1, r\}$  acts on the set $X^2$ and  splits it into disjoint $r$-orbits $\{ xy, r(xy)\}$, where $xy\in X^2$.
An $r$-orbit $\{ xy, r(xy)\}$
is \emph{non-trivial} if $xy\neq r(xy)$.
The element $xy\in X^2$ is \emph{an
$r$-fixed point} if $r(xy) =xy$. \emph{The set of $r$-fixed points} in $X^2$
will be denoted by $\Fcal (X,r)$:
\begin{equation}
\label{eq:fixedpts}
\Fcal (X,r) = \{xy\in X^2\mid r(xy) = xy\}.
\end{equation}
\end{defnotation}

The following useful corollary is a consequence from \cite[Lemma 3.7]{GI21}.
\begin{cor}
\label{cor:orbits_and_fixedpoints}
Let  $(X,r)$ be a solution of YBE of finite order $|X|= n$, and let $\cA= \cA(\textbf{k}, X, r)$ be its Yang-Baxter algebra.
(1) There are exactly $n$ fixed points
$\Fcal = \Fcal (X,r) =  \{x_1y_1, \cdots , x_ny_n\}\subset X^2$ ,
so
$|\Fcal(X,r)|= |X|= n.$
In the special case, when $(X,r)$ is a square-free solution, one has
$\Fcal(X,r) = \Delta_2=\{xx\mid x\in X\}$, the diagonal of $X^2$.
(2) The number of non-trivial $r$-orbits is exactly $\binom{n}{2}$. Each such an orbit has two distinct elements:
$xy$ and $r(xy)$, where $xy, r(xy) \in X^2$.
(3)
The set $X^2$  splits into $\binom{n+1}{2}$ $r$-orbits.
For $xy,zt\in X^2$ there is an equality $xy=zt$ in $\cA$ \emph{iff} $zt\in \{xy, r(xy)\}$. (4) In particular,  $\cA$ has exactly $\binom{n}{2}$ defining relations (each relation corresponds to a nontrivial $r$ orbit).
\end{cor}

\begin{rmk}
\label{orbitsinG}
 \cite{GI04s}
 Let $(X,r)$ be an involutive quadratic set, and let $S=S(X,r)$ be the
 associated monoid.

 (i) By definition, two monomials $w, w^{\prime} \in \langle X\rangle$ are equal
 in $S$
  \emph{iff} $w$ can be transformed to $w^{\prime}$ by a finite sequence of
  replacements each of the form
  \[axyb \longrightarrow ar(xy)b, \;
  \quad \text{where}\;x,y \in X,  a,b \in \langle X\rangle.\]

 Clearly, every such replacement preserves monomial length, which therefore
 descends to $S(X,r)$.
Furthermore, replacements coming from the defining relations are possible only
on monomials of length $\geq 2$, hence $X \subset S(X,r)$ is an inclusion. For
monomials of length $2$,
  $xy =zt$ holds in $S(X,r)$ \emph{iff} $zt = r(xy)$ is an equality of words in
  $X^2$.

(ii) It is  convenient for each $m \geq 2$ to refer to the subgroup $D_m=
D_m(r)$ of the symmetric group $\Sym (X^m)$
generated concretely by the maps
\begin{equation}
\label{eq:r{ii+1}}
r^{ii+1}: X^m \longrightarrow X^m, \; r^{ii+1} =\id_{X^{i-1}}\times r\times
\id_{X^{m-i-1}}, \; i = 1, \cdots, m-1.
\end{equation}
One can also consider the free groups
\[\Dcal_m (r)=  \: _{\rm{gr}} \langle r^{i i+1}\mid i = 1, \cdots, m-1
\rangle,\]
where the
$r^{ii+1}$ are treated as abstract symbols, as well as various quotients
depending on the further type of $r$ of interest.
These free groups and their quotients act on $X^m$ via the actual maps
$r^{ii+1}$, so that
 the image of $\Dcal_m(r)$ in $Sym (X^m)$ is $D_m(r).$ In particular, $D_2(r) =
 \langle r \rangle \subset Sym (X^2)$
 is the cyclic group generated by $r$.
 It follows straightforwardly from part (i) that $w, w^{\prime} \in \langle
 X\rangle$ are equal as words in $S(X,r)$
  \emph{iff} they have the same length, say $m$, and belong to the same orbit
  $\Ocal_{\Dcal_m}$ of $\Dcal_m(r)$ in $X^m$. In this case the equality
  $w=w^{\prime}$ holds in $S(X,r)$ and in the algebra $\cA(\textbf{k}, X, r)$.

An effective part of our combinatorial approach is the exploration of the
action of the group
$\Dcal_2(r) = \langle r\rangle$ on $X^2$,
and the properties of the corresponding orbits.  In the
literature a $\Dcal_2(r)$-orbit $\Ocal$ in $X^2$ is often called
"\emph{an $r$-orbit}" and we shall use this terminology.
\end{rmk}

 In notation and assumption as above, let $(X,r)$ be a finite quadratic set with
 $S=S(X,r)$ graded by length.
 Then the order of the graded component $S_m$ equals the number of $\Dcal_m(r)$-orbits in $X^m$.

\section{The Yang-Baxter algebra $\cA(\textbf{k}, X, r)$  of a finite
nondegenerate symmetric set $(X,r)$}
\label{sec:YBalgebras}

It was proven through the years that the Yang-Baxter algebras $\cA(\textbf{k},
X, r)$ coresponding to finite nondegenerate symmetric sets have remarkable algebraic and
homological properties. They are noncommutative, but have many of the "good"
properties of the commutative polynomial ring
$\textbf{k}[x_1, \cdots, x_n]$, see Remarks \ref{factAS},  \ref{fact1} and Theorem \ref{thm:PBWmain}. This
motivates us to look for more analogues
coming from commutative algebra and algebraic geometry.

\subsection{Basic facts about the YB algebras $\cA(\textbf{k}, X, r)$  of finite
solutions $(X,r)$}
\label{subsec:YBalgebras}

The following remark observes the importance of  finite square-free solutions and their close relations to Artin-Schelter regularity. The results are extracted from
\cite{GIVB, GI04s, GI12} and \cite{rump}.
\begin{rmk}
\label{factAS}
Suppose $(X,r)$ is a square-free nondegenerate and involutive \emph{quadratic set} of order $n$.
Let and  $\cA = \cA(\textbf{k}, X, r)$  be the associated quadratic algebra.
The following conditions are equivalent.
\begin{enumerate}
\item
\label{factAS1}
 $\cA$ is an Artin-Schelter regular PBW algebra.
\item
\label{factAS2}
 $(X,r)$ is a solution of YBE.
 \item
 \label{factAS3}
There exists an enumeration
$X=\{x_1, x_2, \cdots, x_n\}$ such that $\cA$ is a binomial skew polynomial algebra.
\end{enumerate}
\end{rmk}
The implication (\ref{factAS1})$\Longrightarrow$ (\ref{factAS3}) follows from \cite[Theorem 1.2]{GI12}. (\ref{factAS3})  $\Longrightarrow$ (\ref{factAS1}) is proven in \cite[Theorem B]{GI04s} (see also \cite{GIVB}). (\ref{factAS3})  $\Longrightarrow$ (\ref{factAS2}) is proven in \cite[Theorem 1.1]{GIVB}.
The implication (\ref{factAS2}) $\Longrightarrow$ (\ref{factAS3}) was conjectured by the author and proven by Rump, see \cite[Theorem 1]{rump}.

\begin{rmk}
\label{fact:PBW}
Note that among all Yang-Baxter algebras $\cA=\cA(K,  X, r)$ of finite solutions studied in this paper the only PBW algebras  are those corresponding to square-free solutions $(X,r)$. This follows from Theorem \ref{thm:PBWmain} which will be proven in the next subsection.
\end{rmk}

\begin{convention}
\label{rmk:conventionpreliminary1}  Let $(X,r)$ be a finite solution of YBE
of order $n$, and let $\cA = \cA(\textbf{k},X,r)$ be the associated
Yang-Baxter algebra.
(a) If $(X,r)$ is square-free we fix an
enumeration such that $X= \{x_1, \cdots, x_n\}$ is a set of PBW generators
of $\cA$. In this case $\cA$ is a binomial skew polynomial ring, see
 Definition \ref{binomialringdef}.
 (b)  If $(X,r)$ is not square-free we fix an arbitrary enumeration
$X=\{x_1, \cdots, x_n\}$ on $X$.

In each of the cases (a) and (b) we extend the fixed enumeration on $X$ to the
 deg-lex order $<$ on
$\langle X \rangle$.
By convention the Yang-Baxter algebra $\cA =\cA_X= \cA(\textbf{k},X, r)$
is presented as
\begin{equation}
\label{eq:Algebra}
\begin{array}{c}
\cA =\c A(\textbf{k},X,r) = \textbf{k}\langle X  \rangle /(\Re_{\cA})
\simeq \textbf{k}\langle X ; \;\Re(r)
\rangle ,\;\;\text{where}\\
\Re_{\cA}
 = \{xy-y^{\prime}x^{\prime}\mid xy> y^{\prime}x^{\prime},  \; \text{and}\; r(xy)=y^{\prime}x^{\prime}
 \}.
\end{array}
\end{equation}
Consider the two-sided ideal  $I=(\Re_{\cA})$ of $\textbf{k}\langle X  \rangle$,
let $G= G(I)$ be the  unique reduced
Gr\"{o}bner basis of $I$  with respect to $<$. Here we do not need an explicit description of the reduced Gr\"{o}bner basis
$G$  of $I$, but we need some details.
In the case (a) one has $G=\Re_{\cA}$.
It follows from Remark \ref{fact:PBW} that in the case (b)
the set of relations  $\Re_{\cA}$ is not a Gr\"{o}bner basis of $I$, but  $\Re_{\cA} \subsetneqq G$.
Moreover, the shape of the relations  $\Re_{\cA}$ and standard techniques from noncommutative Gr\"{o}bner bases theory imply
that the Gr\"{o}bner basis $G$ is finite, or countably
 infinite, and consists of homogeneous binomials $f_j= u_j-v_j,$ where
 $\LM(f_j)= u_j > v_j$, and $u_j, v_j \in X^m$, for some $m\geq 2$.
The set of all normal monomials modulo $I$ is denoted by $\cN$. As we mentioned
above $\cN= \cN(I) = \cN(G)$.
An element $f \in \textbf{k} \asoX $ is in normal form (modulo $I$), if $f \in
\Span_{\textbf{k}} \cN $.
The free
monoid $\asoX$ splits as a disjoint union
$\asoX=  \cN\sqcup \LM(I).$
The free associative algebra $\textbf{k} \asoX$ splits as a direct sum of
$\textbf{k}$-vector
  subspaces
  $\textbf{k} \asoX \simeq  \Span_{\textbf{k}} \cN \oplus I$,
and there is an isomorphism of vector spaces
$\cA \simeq  \Span_{\textbf{k}} \cN.$
As usual, we denote
\begin{equation}
\label{eq:Nm}
\cN_{d}=\{u\in \cN \mid u\mbox{ has length } d\}.
\end{equation}
 Then
$\cA_d \simeq \Span_{\textbf{k}} \cN_{d}$ for every $d\in\N_0$.
By Corollary \ref{cor:dimAd} $\dim \cA_d  = |\cN_{d}| = \binom{n+d-1}{d}, \; \forall d \geq 0$.
Note that since the set of relations $\Re_{\cA}$ is a finite set of homogeneous polynomials,
the elements
of  the reduced Gr\"{o}bner basis $G = G(I)$  of degree $\leq d$ can be found
effectively, (using the standard strategy for constructing a Gr\"{o}bner
basis) and therefore the set of normal monomials $\cN_{d}$  can be found
inductively for $d=1, 2, 3, \cdots .$
(Here we do not need an explicit description of the reduced Gr\"{o}bner basis
$G$  of $I$).
It follows from Bergman's Diamond lemma, \cite[Theorem 1.2]{Bergman}, that
if we consider the space $\textbf{k} \cN$
endowed with multiplication defined by
 \[f \bullet g := \Nor (fg), \quad \text{for every}\; f,g \in \textbf{k} \cN\]
then
$(\textbf{k} \cN, \bullet )$
has a well-defined structure of a graded algebra, and there is an isomorphism of
graded algebras
\[
\cA=\cA(\textbf{k}, X, r) \cong (\textbf{k} \cN, \bullet ),\;\; \text{so}\; \; \cA =\bigoplus_{d\in\N_0}  A_d \cong \bigoplus_{d\in\N_0}  \textbf{k}\cN_d.
\]
By convention we shall often identify the algebra  $\cA$ with $(\textbf{k}\cN,
\bullet )$.

In the case (a) when $(X,r)$ is square-free,  the set of normal monomials is exactly
$\cT$ (the set of ordered terms in $X$),  so $\cA$ is identified with $(\textbf{k} \cT, \bullet )$ and
$S(X,r)$ is identified with $(\cT, \bullet)$.
\end{convention}

We shall recall more properties of the Yang-Baxter algebras which
will be used in the sequel, but first we need the following  lemma which
is involved in our interpretation of  \cite[Theorem 1.3]{GIVB}) as Remark \ref{fact1}.

\begin{lem}
\label{lem:ore}
Every nondegenerate involutive quadratic set $(X, r)$ satisfies the following condition (\emph{Ore condition}).
\begin{equation}
\label{eq:condition}
\begin{array}{l}
\text{Given $a,b\in X$ there exist unique $c,d\in X$ such that $r(ca)= db$.}\\
\text{ Furthermore if $a=b$ then $c=d$}.
\end{array}
\end{equation}
In particular, $r$ is 2-cancellative.
\end{lem}

\begin{proof}
Let $(X,r)$ be a nondegenerate involutive quadratic set (not necessarily
finite).
Let $a, b \in X$. We have to find unique pair $c, d$, such that $r(c,a) = (d,
b)$. By the nondegeneracy there is unique $c\in X$, such that $c^a=b$.  Let
$d = {}^ca$,  then $r(c,a) = ({}^ca, c^a)= (d,b),$ as desired. It also follows
from the nondegeneracy that the pair $c, d$ with this propery is unique.
Assume now that $a=b$. The equality $r(c,a) =(d,a)$ implies $ ({}^ca, c^a)=
(d,a)$, so $c^a=a$. But $r$ is involutive, thus $(c,a) = r(d,a) =({}^da, d^a)$,
and therefore $d^a=a$. It follows that $c^a=d^a$, and, by the nondegeneracy, $c
= d$.
\end{proof}

The following facts are a compilation of results from \cite{GIVB} and are true for every finite solution of YBE.
\begin{rmk}
\label{fact1}
Suppose $(X,r)$ is a finite solution of YBE of order $n$, $X = \{x_1,
\cdots , x_n\}$. Let $S=S(X,r)$ be the associated Yang-Baxter monoid and let $\cA = \cA(\textbf{k},
X, r)$ be the associated Yang-Baxter algebra.
Then the following conditions hold.
\begin{enumerate}
\item  (A modified version of \cite[Theorem 1.3]{GIVB})

$S$ is a semigroup of $I$-type, that is there is a bijective map $v: \cU
\mapsto S$, where $\cU$ is the free $n$-generated abelian monoid
$\cU=[u_1, \cdots, u_n]$ such that $v(1) = 1,$ and such that
\[
\{v(u_1a), \cdots, v(u_na)\} = \{ x_1v(a), \cdots, x_nv(a) \}, \;\text{for
all}\; a \in \cU.
\]
\item The Hilbert series of $\cA$ is $H_{\cA}(t)= 1/(1-t)^n.$
\item \cite[Theorem 1.4]{GIVB}
(a) $\cA$ has finite global dimension and polynomial growth; (b) $\cA$ is Koszul;
 (c) $\cA$ is left and right Noetherian; (d) $\cA$ satisfies the Auslander condition and
 is Cohen-Macaulay.
(e) $A$ is finite over its center.
\item \cite[Corollary 1.5]{GIVB}
$\cA$ is a domain, and in particular the monoid $S$ is cancellative.
\end{enumerate}
\end{rmk}
For convenience of the reader we shall make a brief observation.
Note first that the hypothesis of Remark \ref{fact1} is satisfied by arbitrary finite solution of YBE $(X,r)$ which is not necessarily square free, and, in general, the algebra
$\cA = \cA(\textbf{k},
X, r)$ is not a binomial skew polynomial ring, or equivalently,  $\cA$ is not a  PBW algebra.

 Next observe that part (1) of Remark \ref{fact1} is a modification of the original second part of \cite[Theorem 1.3]{GIVB} which  states (in our terminology):
"Suppose that  $(X,r)$ is a finite solution of YBE of order $n$ which satisfies the condition (\ref{eq:condition}).
Then the monoid $S(X,r)$ is of $I$ type".
However, under the hypothesis of Remark \ref{fact1}, Lemma \ref{lem:ore} implies the necessary condition (\ref{eq:condition}).

The following corollary is straightforward from Remark \ref{fact1} (1) and will be used throughout the paper.
\begin{cor}
\label{cor:dimAd}
In notation and conventions as above. Let $(X,r)$ be a finite solution of YBE. Then for every integer $d \geq 1$ there are equalities
\begin{equation}
\label{eq:dimAd}
\dim \cA_d=\binom{n+d-1}{d} = \binom{n+d-1}{n-1}=|\cN_d|.
\end{equation}
\end{cor}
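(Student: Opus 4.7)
The plan is to read off both equalities directly from facts already assembled earlier in the excerpt. First I would invoke Facts \ref{fact1}(2), which asserts that the Hilbert series of $\cA$ is
\[
H_{\cA}(t) = \frac{1}{(1-t)^n}.
\]
By definition of Hilbert series, $\dim \cA_d$ is the coefficient of $t^d$ in this expansion. But formula \eqref{eq:hilbert} (the commutative polynomial ring computation) records precisely that the coefficient of $t^d$ in $1/(1-t)^n$ equals $\binom{n+d-1}{d}$. Combining these two gives the first equality $\dim\cA_d = \binom{n+d-1}{d}$.

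For the second equality I would appeal to the identification set up in Convention \ref{rmk:conventionpreliminary1}: via Bergman's Diamond lemma, $\cA \cong (\textbf{k}\cN,\bullet)$ as graded algebras, and this isomorphism respects the grading by length, hence restricts to an isomorphism of vector spaces $\cA_d \cong \textbf{k}\cN_d$. Consequently $\dim\cA_d = |\cN_d|$, which gives the second equality.

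There is no real obstacle here; the statement is a packaging of Facts \ref{fact1}(2) with the Hilbert coefficient formula for $\textbf{k}[X_n]$ and the Gröbner-basis description of $\cA$. The only thing worth remarking is that the proof implicitly uses that $\cA$ and the commutative polynomial ring $\textbf{k}[X_n]$ share the same Hilbert series, a hallmark property of Yang-Baxter algebras of finite nondegenerate symmetric sets; once this is in hand, the corollary follows in one line.
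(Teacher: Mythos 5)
Your proof is correct and matches the paper's (implicit) argument: the corollary is stated without proof precisely because it follows from Facts \ref{fact1}(2) (Hilbert series $1/(1-t)^n$) combined with the identity \eqref{eq:hilbert} and the already-established isomorphism $\cA_m \simeq \Span_{\textbf{k}}\cN_m$.
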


\subsection{Every finite solution $(X,r)$ whose Yang-Baxter algebra $\cA(\textbf{k}, X, r)$ is PBW is square-free}
\label{subsec:YBalgebras}
In this subsection we give an answer to Problems \ref{problem} (1).

Suppose $(X,r)$ is a finite solution of YBE whose Yang-Baxter
algebra
$A=A(\textbf{k},  X, r)$ is PBW, where
$X = \{x_1, x_2, \cdots, x_n\}$  is a set of PBW generators. Then
$\cA=\textbf{k}\asX/(\Re_{\cA})$, where
the set of (quadratic) defining relations $\Re_{\cA}$ of $\cA$ coincides with the reduced
 Gr\"{o}bner basis of the ideal $(\Re_{\cA})$ modulo the deg-lex order on $\asX$. The cardinality of $\Re_{\cA}$ is exactly $\binom{n}{2}$, see Corollary \ref{cor:orbits_and_fixedpoints}.
Recall that the set of leading monomials
\begin{equation}
\label{eq:lempbw0}
W =\{LM(f) \mid  f \in \Re_{\cA}\}
\end{equation}
is called \emph{the set of obstructions} for $\cA$, in the sense of Anick, \cite{Anickmon}.
\begin{lem}
\label{lem:AisPBW}
Suppose $(X,r)$ is a solution of YBE of order $n$ and that its
Yang-Baxter algebra $\cA=\cA(\textbf{k},  X, r)$ is PBW, where
$X=  \{x_1, x_2, \cdots , x_n\}$ is a set of PBW generators. Then there exists a
permutation
\[y_1=x_{s_1},\; y_2=x_{s_2},\; \cdots,\;  y_n=x_{s_n} \;\text{of} \; \; x_1, x_2, \cdots , x_n,   \]
such that the following conditions hold.
\begin{enumerate}
\item The set of obstructions
$W =\{LM(f) \mid f \in \Re_{\cA}\}$
consists of $\binom{n}{2}$ monomials given below
\begin{equation}
\label{eq:lempbw1}
W =\{y_jy_i \mid 1 \leq i < j \leq n\}.
\end{equation}
\item
 The normal $\textbf{k}$-basis of $\cA$ modulo $I =(\Re_{\cA})$ is the set
\begin{equation}
\label{eq:lempbw2}
\cN = \{y_1^{\alpha_1}y_2^{\alpha_2}\cdots y_n^{\alpha_n} \mid \alpha_i \geq 0,
\; \text{for}\; 1 \leq i \leq n\}.
\end{equation}
\end{enumerate}
\end{lem}
\begin{proof}
Let $W$ be the set of obstructions defined via (\ref{eq:lempbw0}) and let
 $A_W$ be \emph{the associated monomial algebra} defined as
\begin{equation}
\label{eq:lempbw3}
A_W := \textbf{k}\asX/(W)
\end{equation}
It is well known that a word $u\in \asX$ is normal modulo $I=(\Re_{\cA})$ \emph{iff} $u$ is normal modulo the set of obstructions $W$.
Therefore the two algebras $\cA$ and $A_W$ share the same normal $\textbf{k}$-basis   $\cN =\cN(I) =\cN(W)$ and their Hilbert series are equal.
By  Remark \ref{fact1} part (2), the Hilbert series of $\cA$ is $H_{\cA}(t) = 1/(1-t)^n,$  therefore
\begin{equation}
\label{eq:lempbw4}
 H_{A_W}(t)= H_{\cA}(t) = 1/(1-t)^n.
\end{equation}
Thus the Hilbert series of  $A_W$ satisfies condition (5) of \cite[Theorem 3.7]{GI12} (see page 2163), and  it follows from the theorem
that there exists a
permutation $y_1=x_{s_1}, \; y_2= x_{s_2}, \; \cdots,\;  y_n= x_{s_n}$ of the generators $x_1, x_2, \cdots , x_n$, such that
the set of obstructions $W$ satisfies (\ref{eq:lempbw1}).
The Diamond Lemma,  \cite{Bergman}, and the explicit description  of the obstruction set $W$ given in (\ref{eq:lempbw1}) imply that
the set of normal words $\cN=\cN(I) = \cN(W)$ is described in   (\ref{eq:lempbw2}).
\end{proof}
Observe that if the permutation given in the lemma is not trivial then there is an inversion, that is a  pair $i, j$ with $i<j$ and $y_j< y_i$.
\begin{thm}
    \label{thm:PBWmain}
Suppose $(X,r)$ is a solution of YBE of order $n$, and $\cA=\cA(\textbf{k},  X, r)$ is its Yang-Baxter algebra.
Then $\cA$ is a PBW algebra with a set of PBW generators $X = \{x_1, x_2, \cdots, x_n\}$  (enumerated properly)
  if and only if $(X,r)$ is a square-free solution.
\end{thm}
\begin{proof}

If $(X,r)$ is square-free then there exists an enumeration
$X=\{x_1, \cdots, x_n\}$, so that $\cA$ is a binomial skew-polynomial ring in the sense of \cite{GI96},  and therefore $\cA$  is PBW.
This was a conjecture of the author which was proven later by Rump, \cite[Theorem 1]{rump}.

Assume now that $(X,r)$ is a finite solution of order $n$ whose Yang-Baxter algebra $\cA=\cA(\textbf{k},  X, r)$ is PBW, where
$X = \{x_1, x_2, \cdots, x_n\}$  is a set of PBW generators.  We have to show that $(X,r)$ is square-free that is $r(x,x) = (x, x)$ , for all $x \in X.$

It follows from our assumptions that in the presentation $\cA=\textbf{k}\asX/(\Re_{\cA})$
the set of (quadratic) defining relations $\Re_{\cA}$ of $\cA$ is the reduced
 Gr\"{o}bner basis of the ideal $(\Re_{\cA})$ modulo the deg-lex order on $\asX$.
By Lemma \ref{lem:AisPBW} there exists a
permutation $y_1=x_{s_1}, y_2= x_{s_2}, \cdots , y_n= x_{s_n}$ of    $x_1, x_2, \cdots , x_n$ such that
the obstruction set $W=\{\LM (f)\mid f \in \Re_{\cA}\}$  satisfies (\ref{eq:lempbw1}) and the set of normal monomials
$\cN$ described in (\ref{eq:lempbw2})  is a PBW basis of $\cA$.

We use some properties of $(X,r)$ and the relations of $\cA$ listed below.

(i) $(X,r)$ is $2$-cancellative. This follows from Lemma \ref{lem:ore};
(ii) There are exactly $n$ fixed points $xy\in X^2$ with $r(x,y) = (x,y)$ and the set $\Re_{\cA}$ consists of exactly  $ \binom{n}{2}$ relations.  This follows from Corollary \ref{cor:orbits_and_fixedpoints}.
(iii)  Every monomial of the shape $y_jy_i,\;  1\leq i < j\leq n$ is the leading monomial of some polynomial $\varphi_{ji}\in \Re_{\cA}.$
(It is possible that $y_j < y_i$ for some $j> i$.)

 Therefore the algebra
$\cA$  has a presentation
 \[\cA=\textbf{k} \langle x_1, \cdots , x_n\rangle/(\Re_{\cA})\] with
precisely $\binom{n}{2}$ defining relations
\begin{equation}
\label{eq:pbw11}
\Re_{\cA}=\{\varphi_{ji} = y_jy_i - u_{ij}
 \mid 1\leq i<j\leq n\}
\end{equation}
 such that
\begin{enumerate}
\item For every pair $i, j, \; 1\leq
i<j\leq n$, the monomial $u_{ij}$ satisfies $u_{ij}= y_{i^{\prime}} y_{j^{\prime}},$
where $i^{\prime} \leq  j^{\prime}$, and $y_j > y_{i^{\prime}}$ (since $\LM(\varphi_{ji}) =y_jy_i > y_{i^{\prime}} y_{j^{\prime}} $, and since $(X,r)$ is $2$-cancellative);
\item Each  monomial $y_iy_j$  with $1 \leq i\leq j \leq n$  occurs at most once in $\Re_{\cA}$ (since $r$ is a bijective map).
\item $\Re_{\cA}$ is the
{\it reduced Gr\"obner basis} of the two-sided ideal $(\Re_{\cA})$,
with respect to the degree-lexicographic order $<$ on $\asX$.
\end{enumerate}
In terms of the relations $\Re_{\cA}$ our claim that $r(x,x) = (x,x)$,  for all $x\in X$,   is equivalent to
\begin{equation}
\label{eq:pbw21}
 u_{ij} \neq xx, \; \; \text{where}\;\; x \in X, \;\; \text{and} \; 1 \leq i < j \leq n.
\end{equation}
So far we know that $(X,r)$ has exactly $n$ fixed points, and each monomial $y_jy_i,  1\leq i < j\leq n$
  is not a fixed point.
Therefore it will be enough to show that a monomial $y_iy_j,$ with $1 \leq i <  j \leq  n$, can not be a fixed point.

Assume on the contrary, that $r(y_i, y_j) = (y_i, y_j)$, for some $1 \leq i < j \leq  n$. We claim that in this case $\Re_{\cA}$ contains two relations of the shape
\begin{equation}
\label{eq:pbw31}
 (a) \quad y_py_q -y_jy_j, \; \text{where}\;  p>q,  y_p > y_j,\; \; \text{and}\; \;  (b) \quad y_sy_t -y_iy_i,
\;  \text{where}\;  s>t ,  y_s > y_i.
\end{equation}
 Consider the increasing chain of left ideals of $\cA$
\[
I_1 \subseteq I_2\subseteq \cdots \subseteq I_k \subseteq \cdots,
\]
where for $k \geq 1,$  $I_k$ is the left ideal
\[
I_k= {}_{\cA}  (y_iy_j, \; y_iy_j^2, \; \cdots,\;  y_iy_j^k ).
\]
By \cite[Theorem 1.4]{GIVB}, see also Remark \ref{fact1} (3),  the algebra $\cA$ is left Noetherian
hence there exists $k > 1$, such that $I_{k-1}= I_{k}= I_{k+1}= \cdots$, and therefore $y_iy_j^k  \in I_{k-1}$.
This implies
\begin{equation}
\label{eq:pbw41}
w\bullet (y_iy_j^c) = y_iy_j^k  \in \cN, \;\text{for some}\; c,  \; 1 \leq c \leq k-1, \;\text{and some}\; w \in \cN,  |w|=k-c.
\end{equation}
It follows from (\ref{eq:pbw41}) that the monomial $v_0 = y_iy_j^k$
can be obtained from the monomial $w(y_iy_j^c)$ by applying a finite sequence of replacements (reductions) in $\asX$. More precisely,  there exists a sequence of monomials
\[
v_0 =  y_iy_j^k, \; v_1, \cdots, \; v_{t-1}, \; v_t =w (y_iy_j^c) \in \asX
\]
and replacements
\begin{equation}
\label{eq:pbw51}
 v_t\rightarrow v_{t-1}\rightarrow \cdots \rightarrow v_1 \rightarrow   v_0 =  y_iy_j^k\in \cN,
\end{equation}
where each replacement comes from some quadratic relation $f_{pq}= y_py_q-u_{qp}$ in (\ref{eq:pbw11}) and has the shape
 \[a[y_py_q]b \rightarrow  a(u_{qp})b, \; \; \text{where}\; \; n\geq p>q\geq 1, \; a, b \in  \asX .\]
We have assumed that $ y_iy_j$ is a fixed point, so it can not occur in a relation in (\ref{eq:pbw11}). Thus the  rightmost replacement in (\ref{eq:pbw51})
is of the form
\[u_1 = y_iy_j \cdots y_j [y_py_q]\cdots  y_j  \rightarrow y_iy_j \cdots y_j (u_{qp})\cdots  y_j=
y_iy_j\cdots y_j (y_jy_j)\cdots  y_j = v_0
 \]
where $p, q$ is a pair with, $1 \leq q < p \leq n,$ $u_{qp}=  y_jy_j $ and $y_p > y_j$. In other words the set
$\Re_{\cA}$ contains a relation of type (a)  $y_py_q - x_jx_j$ where $p>q,  y_q> y_j.$

Analogous argument proves the existence of a relation of the type (b) in (\ref{eq:pbw31}). This time we consider an increasing chain of right ideals $I_1 \subseteq I_2\subseteq \cdots \subseteq I_k \subseteq \cdots$, where $I_k$ is the right ideal
$I_k= (y_iy_j, y_i^2y_j, \cdots , y_i^k y_j )_{\cA}$ and apply the right Noetherian property of $\cA$.

Consider now the subset of fixed points
\[
\Fcal_0 (X,r) = \{y_iy_j  \in X^2  \;\;  \text{such that } \;\;  i< j \;  \text{and } \; r(y_i, y_j) = (y_i, y_j)\},
\]
which by our assumtion is not empty.
Then $\Fcal_0 (X,r) $ has cardinality $m\geq 1$ and  $\Re_{\cA}$ contains at least $m+1$ (distinct) relations
of the type
\begin{equation}
\label{eq:fixed12}
y_py_q  - xx, \; \text {where}\;x \in X,  p> q \; \text {and}\; y_p > x.
\end{equation}
The set $\cN_2$ of normal monomials of length $2$   contains $\binom{n}{2}$  elements of the shape
$y_sy_t,  1 \leq s < t \leq n$, and we have assummed that $m$ of them are fixed.
Then there are $\binom{n}{2}-m$ distinct monomials $y_iy_j \in \cN_2, 1 \leq i < j\leq n $  which are not fixed.
 Each of these monomials occurs in exactly one relation
\[y_sy_t -  y_iy_j
,\;\text{where}\;
 r(y_s, y_t) = (y_i, y_j), \;   s>t, \; y_s > y_i.\]
Thus $\Re_{\cA}$  contains $ \binom{n}{2}-m $ distinct square-free relations and at least $m+1$ relations which contain squares as in (\ref{eq:fixed12}) .
Therefore the set of relations has cardinality
\[|\Re_{\cA}|\geq \binom{n}{2}-m+ m+1 > \binom{n}{2}  \]
which is a contradiction.

We have shown that a monomial $y_iy_j$ with $1 \leq i < j \leq n$ can not be a fixed point, and therefore it occurs in a relation in $\Re_{\cA}.$
But $(X,r)$ has exactly $n$ fixed points, so these  are the elements of the diagonal of $X^2$ ,  $x_ix_i, 1 \leq i \leq n$. It follows that $(X,r)$ is square-free.
\end{proof}

\begin{pro}
    \label{thm:PBW}
Let $(X,r)$ be a finite non-degenerate involutive quadratic set, and let $\cA=\cA(\textbf{k},  X, r) = \textbf{k}\asX/ (\Re_{\cA})$  be its quadratic algebra. Assume that there is an enumeration $X = \{x_1, x_2, \cdots, x_n\}$  of $X$ such that the set
\[
\cN = \{x_1^{\alpha_1}x_2^{\alpha_2}\cdots x_n^{\alpha_n}\mid \alpha_i \geq 0
\; \text{for}\; 1 \leq i \leq n\}
\]
is a normal $\textbf{k}$-basis of $\cA$ modulo the ideal $I= (\Re_{\cA})$.
Then
$\cA=\cA(\textbf{k},  X, r)$ is a PBW algebra, where
$X = \{x_1, x_2, \cdots, x_n\}$  is a set of PBW generators of $\cA$
and the set of relations $\cR_0$ is a quadratic Gr\"obner basis of the two-sided ideal $(\Re_{\cA})$.
The following conditions are equivalent.
\begin{enumerate}
\item[(1)] The algebra $\cA$ is left and right Noetherian.
\item[(2)]  The quadratic set $(X,r)$ is square-free.
\item[(3)]  $(X,r)$ is a solution of YBE.
\item[(4)]  $\cA$ is a binomial skew polymomial ring in the sense of \cite{GI96}.
\end{enumerate}
\end{pro}
\begin{proof}
The quadratic set $(X,r)$ and the relations of $\cA$  satisfy conditions similar to those listed in the proof of Theorem \ref{thm:PBWmain}. More precisely:
(i) $(X,r)$ is $2$-cancellative. This follows from Lemma \ref{lem:ore};
 (ii) There are exactly $n$ fixed points $xy\in X^2$ with $r(x,y) = (x,y)$. This follows from Corollary \ref{cor:orbits_and_fixedpoints}.
(iii) It follows from the hypothesis that every monomial of the shape $x_jx_i, 1\leq i < j\leq n,$ is not in the normal  $\textbf{k}$ -basis  $\cN$, and therefore it is the highest monomial of some polynomial $\varphi_{ji}\in \Re_{\cA}.$
 \cite[Proposition 2.3.]{GI11} implies that if $(X,r)$ is a nondegenerate involutive quadratic set  of order $n$ then the set $\Re_{\cA}$ consists of exactly  $ \binom{n}{2}$ relations.
Therefore the algebra
$\cA$  has a presentation
 \[\cA=\textbf{k} \langle x_1, \cdots , x_n\rangle/(\Re_{\cA})\] with
precisely $\binom{n}{2}$ defining relations
\begin{equation}
\label{eq:pbw1}
\Re_{\cA}=\{\varphi_{ji} = x_{j}x_{i} - x_{i'}x_{j'}
 \mid 1\leq i<j\leq n\}
\end{equation}
 such that
\begin{enumerate}
\item[(a)] For every pair $i, j, \; 1\leq
i<j\leq n$, one has
 $i^{\prime} \leq  j^{\prime}$, and $j > i^{\prime}$ (since $\LM(\varphi_{ji}) =x_jx_i> x_{i'}x_{j'}$, and since $(X,r)$ is $2$-cancellative);
\item[(b)]  Each ordered monomial (term) of length 2 occurs at most once in $\Re_{\cA}$ (since $r$ is a bijective map).
\item[(c)]  $\Re_{\cA}$ is the
{\it reduced Gr\"obner basis} of the two-sided ideal $(\Re_{\cA})$,
with respect to the deg-lex order $<$ on $\asX$,  or
equivalently the overlaps $x_kx_jx_i,$ with $k>j>i$ do not give
rise to new relations in $\cA.$
\end{enumerate}
(1) $\Rightarrow$ (2). The proof is analogous to the proof of Theorem  \ref{thm:PBWmain}. It is enough to show that a monomial $x_ix_j$ with $1 \leq i <  j \leq  n,$ can not be a fixed point. Assuming the contrary, and
applying an argument similar to the proof of Theorem  \ref{thm:PBWmain}, in which we involve the left and right Noetherian properties of $\cA$, we get a contradiction. Thus every monomial
 $x_ix_j$ with $1 \leq i < j \leq n$  occurs in a relation in $\Re_{\cA}.$ At the same time the monomials $x_jx_i$ with $1 \leq i < j \leq n$ are also involved in the relations  $\Re_{\cA}$, hence they are not fixed points.
But $(X,r)$ has exactly $n$ fixed points, so these  are the elements of the diagonal of $X^2$ ,  $x_ix_i, 1 \leq i \leq n$. It follows that $(X,r)$ is square-free.
(2) $\Rightarrow$ (4). If $(X,r)$ is square-free then the relations $\Re_{\cA}$ given in (\ref{eq:pbw1})  are exactly the defining relations of a binomial skew polynomial ring, moreover by the hypothesis of the proposition condition (d$^{\prime}$) in see Definition \ref{binomialringdef} therefore all conditions in Definition \ref{binomialringdef} hold, so $\cA$ is a skew polynomial ring with binomial relations in the sence of
\cite{GI96}.
The implication (4) $\Rightarrow$  (3)  follows from \cite[Theorem 1.1]{GIVB}.
The implication (3) $\Rightarrow$  (1) follows from \cite[Theorem 1.4]{GIVB}, see also Remark \ref{fact1} (3).
\end{proof}

\section{The $d$-Veronese subalgebra $\cA^{(d)}$ of the Yang-Baxter algebra $\cA (\textbf{k}, X, r)$,
its generators and relations}
\label{sec:dVeronese}

In this section $(X,r)$ is a finite solution of YBE, $d\geq 2$ is an integer.
We shall study the $d$-Veronese subalgebras $\cA^{(d)}$ of  the  Yang-Baxter
 algebra $\cA=\cA(\textbf{k}, X, r)$.
 This is an
algebraic construction which mirrors the Veronese embedding.
Results on Veronese subalgebras of noncommutative graded algebras
appeared first in \cite{Froberg} and \cite{Backelin}.
Our main reference here is  \cite[Section 3.2]{PoPo}.
We shall prove Theorem \ref{thm:d-Veronese_relations} which
 presents the $d$-Veronese subalgebra $\cA^{(d)}$ in terms of generators and
 quadratic relations.

\subsection{Veronese subalgebras of graded algebras}
We recall first some basic definitions and facts about Veronese subalgebras of
general graded algebras.
\begin{dfn}
Let $A= \bigoplus_{m\in\N_0}A_{m}$ be a graded $\textbf{k}$-algebra. For any integer $d\geq
1$, the \emph{$d$-Veronese subalgebra} of $A$ is the graded algebra
    \[A^{(d)}=\bigoplus_{m\in\N_0} A_{md}.\]
\end{dfn}

By definition the algebra $A^{(d)}$ is a subalgebra of $A$. However, the
embedding is not a graded algebra morphism.
The Hilbert function of $A^{(d)}$ satisfies
    \[h_{A^{(d)}}(t)=\dim(A^{(d)})_t=\dim(A_{td})=h_A(td).\]

It follows from \cite[Proposition 2.2, Ch 3]{PoPo}
that if $A$ is a one-generated quadratic Koszul algebra, then its Veronese
subalgebras are also one-generated quadratic and Koszul.

\begin{cor}
\label{cor:dVeron}
Let $(X,r)$ be a solution of order $n$, and let $\cA = \cA(\textbf{k}, X,r)$ be
its Yang-Baxter algebra, let $d \geq 2$ be an integer.
Then the  $d$-Veronese subalgebra $\cA^{(d)}$  is one-generated, quadratic and
Koszul.
\end{cor}
\begin{proof}  If $(X,r)$ is a solution
of order $n$ then, by definition the Yang-Baxter algebra $\cA = \cA(\textbf{k},
X,r)$  is one-generated and quadratic. Moreover, $\cA$ is Koszul, see
Remark \ref{fact1}.  It follows straightforwardly from \cite[Proposition
2.2, Ch 3]{PoPo} that $\cA^{(d)}$  is one-generated, quadratic and Koszul.
\end{proof}
We shall prove in the next section that $\cA^{(d)}$  is a left and a right Noetherian domain.

In the assumptions of Corollary \ref{cor:dVeron}, it is clear, that the d-Veronese
subalgebra $\cA^{(d)}$ satisfies
\begin{equation}
\label{eq:A^d}
\cA^{(d)} = \bigoplus_{m\in\N_0}  A_{md} \cong \bigoplus_{m\in\N_0}
\textbf{k}\cN_{md}.
\end{equation}
Moreover, the normal monomials
$w \in \cN_d$ of length $d$ are degree one generators of $A^{(d)}$, and by
Corollary \ref{cor:dimAd}
there are equalities
\[
|\cN_d | =\dim \cA_d=\binom{n+d-1}{d}
\]
We set
\begin{equation}
\label{eq:N}
N=\binom{n+d-1}{d}
\end{equation}
and order the elements of $\cN_d$
lexicographically:
        \begin{equation}
        \label{eq:deg-d-monomials}
\cN_d := \{ w_1 < w_2< \dots < w_N \}.
        \end{equation}
The $d$-Veronese $\cA^{(d)}$ is a quadratic algebra with
one-generators $w_1, w_2, \dots, w_N.$ We shall find a
minimal set of quadratic relations for $\cA^{(d)},$  each of which is a linear
combination of products  $w_iw_j$ for some $i,j\in\{1,\dots,N\}$.
The relations are intimately connected with the properties of the Yang-Baxter monoid
$S(X,r).$
As a first step we shall introduce a finite nondegenerate symmetric set $(S_{d},
r_d)$ induced in a natural way by the braided monoid $S(X,r)$.

\subsection{The braided monoid $S = S(X, r)= (S, r_S)$  of a braided set $(X,r)$}
Matched pairs of monoids, M3-monoids and braided monoids in a most general
setting were studied in \cite{GIM08}, where the interested reader can find the
necessary definitions and
the original results. Here we extract only some facts which will be used in the
paper.
\begin{fact}(\cite{GIM08}, Theor. 3.6, Theor. 3.14.)
\label{theoremA}
Let  $(X,r)$ be a braided set and let
$S=S(X,r)$ be its Yang-Baxter monoid.
Then
\begin{enumerate}
\item
The left and the
right actions
$
{}^{(\;\;)}{\circ}: X\times X  \longrightarrow
 X  , \; \text{and}\;\; \circ^{(\;\;)}: X
\times X \longrightarrow  X
$
defined via $r$ can be extended in a unique way to a left and a
right action
\[{}^{(\;\;)}{\circ}: S\times S  \longrightarrow
 S,\; \; (a, b) \mapsto  {}^ab, \; \text{and}\;\; \circ^{(\;\;)}: S
\times S \longrightarrow  S, \;\; (a, b) \mapsto  a^b
\]
which make $S$ \emph{a strong graded {\bf M3}-monoid}.
In particular, the following equalities hold in $S$ for all $a, b, u, v \in S$.
\begin{equation}
\label{eq:braided_monoid}
\begin{array}{lclc}
{ML0 :}\quad & {}^a1=1,\quad  {}^1u=u;\quad &{MR0:} \quad &1^u=1,\quad a^1=a
\\
 {ML1:}\quad& {}^{(ab)}u={}^a{({}^bu)},\quad& {MR1:}\quad  & a^{(uv)}=(a^u)^v
 \\
{ML2:}\quad & {}^a{(u.v)}=({}^au)({}^{a^u}v),\quad &{MR2:}\quad &
(a.b)^u=(a^{{}^bu})(b^u)\\
{M3:}\quad &{}^uvu^v=uv.& &
\end{array}
\end{equation}
These actions define a bijective map \[r_S: S\times S
\longrightarrow S\times S, \quad  r_S(u, v) := ({}^uv, u^v)\]
which obeys the Yang-Baxter equation, so $(S, r_S)$ is \emph{a braided
monoid}.
In particular,
$(S, r_S)$
is a set-theoretic solution of YBE, and the associated
bijective map $r_S$ restricts to $r$.
\item The following conditions hold.
\begin{enumerate}
\item $(S,r_S)$ is \emph{a graded braided
monoid},  that is the actions agree with the grading (by length) of $S$:
\begin{equation}
\label{eq:braided_monoid2}
|{}^au|= |u|= |u^a|,  \forall   \; a,u \in S.
\end{equation}
\item $(S, r_S)$ is non-degenerate  \emph{iff} $(X,r)$ is non-degenerate.
    \item $(S, r_S)$ is involutive \emph{iff}
$(X,r)$ is involutive. \item $(S, r_S)$ is square-free \emph{iff}
$(X,r)$ is a trivial solution.
\end{enumerate}
\end{enumerate}
\end{fact}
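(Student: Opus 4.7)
The plan is to construct the extended actions of $S$ on $S$ inductively from $r$, descend them to the quotient monoid $S$, verify the M3 and Yang--Baxter axioms, and then read off the properties in part (2).

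First I would extend the left and right actions from $X\times X\to X$ to $\asoX\times\asoX\to\asoX$ by taking ML0--ML2 and MR0--MR2 as recursive definitions: for a single generator $a\in X$ and a word $u = u_1u_2\cdots u_m$, the formulas ML2 and MR2 determine ${}^au$ and $a^u$ from the iterated values $r(a,u_1), r(a^{u_1},u_2),\dots$; for a general word $a = a_1\cdots a_k$, the formulas ML1 and MR1 reduce the action of $a$ to iterated actions of single letters. Boundary cases are supplied by ML0/MR0. By construction the resulting maps preserve length and restrict to $r$ on $X\times X$.

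The main task is to show that the actions descend to well-defined maps $S\times S\to S$. Since $S$ is defined by the relations $xy = {}^xy\cdot x^y$ for $x,y\in X$ (the M3 axiom on $X$), the check reduces to verifying, for every generator $a\in X$ and every such relation, that ${}^a(xy) = {}^a({}^xy\cdot x^y)$ holds in $S$, together with the symmetric identities for the right action and for the case when $xy$ appears as the second argument. Unfolding both sides via ML2 and MR2, the first identity becomes precisely the braid condition \textbf{l1} of Remark \ref{rmk:YBE1}, combined with M3 on $X$ applied to the pair $({}^ax, {}^{a^x}y)$; the remaining three cases reduce analogously to \textbf{r1}, \textbf{lr3}, and M3. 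Once descent is established, the global M3 axiom ${}^uv\cdot u^v = uv$ in $S$ follows by double induction on $|u|+|v|$: using ML1/ML2 and MR1/MR2, one rewrites ${}^{ab}v\cdot (ab)^v = {}^a({}^bv)\cdot a^{{}^bv}\cdot b^v$ and invokes the inductive hypothesis on the pairs $(a, {}^bv)$ and $(b, v)$. The YBE for the associated map $r_S(u,v) = ({}^uv, u^v)$ is then propagated from $r$ by the same inductive scheme.

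For part (2): length preservation in (a) is automatic from the recursive definition; (b) and (c) transfer from $X$ to $S$ because each recursive step applies a bijection (respectively involution) of $X\times X$. For (d), the ``if'' direction is straightforward: if $r$ is the flip, iterating ML2 shows ${}^au = u$ for all $a,u\in S$, so $r_S$ is itself the flip on $S\times S$ and in particular square-free. For the converse, square-freeness of $(S,r_S)$ restricted to length-one words gives square-freeness of $(X,r)$; applying ${}^uu=u$ to the length-two element $u = xy\in S$ with $x\neq y$ and unfolding via ML1, ML2, and the just-established square-freeness of $(X,r)$, one extracts ${}^yx = x$ and $x^y = y$ for all $x,y\in X$, i.e., $r$ is the flip. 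The main obstacle is the descent step: the recursion on $\asoX$ is transparent, but its compatibility with the defining relations of $S$ is precisely where the Yang--Baxter equation on $X^3$ is used in an essential way, and where the careful matching between \textbf{l1}, \textbf{r1}, \textbf{lr3} and the iterated applications of ML2/MR2 must be carried out.
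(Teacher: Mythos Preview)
The paper does not prove this statement at all: it is recorded as a \emph{Fact} and attributed to \cite{GIM08}, Theorems~3.6 and~3.14, with no argument given in the present text. So there is no in-paper proof to compare your proposal against.

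That said, your sketch is a faithful reconstruction of the standard argument from \cite{GIM08}: define the actions on the free monoid $\asoX$ by taking ML0--ML2/MR0--MR2 as recursive rules, then check compatibility with the defining relations $xy = {}^xy\cdot x^y$ using the braid identities \textbf{l1}, \textbf{r1}, \textbf{lr3}, and finally propagate M3 and YBE by induction on length. Your treatment of (2)(a)--(c) is correct.

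For part (2)(d), your outline is right but hides a small case analysis that is worth making explicit (and which mirrors the orbit argument the paper carries out in detail in Theorem~\ref{thm:Sd_square-free} for $(S_d,r_d)$). Writing out ${}^{(xy)}(xy)$ via ML1/ML2 gives a word in $X^2$ whose first letter is ${}^x({}^yx)$; the equality ${}^{(xy)}(xy)=xy$ in $S$ only says this word lies in the $r$-orbit $\{xy,\ ({}^xy)(x^y)\}$, so a priori two cases arise. One must use nondegeneracy (which is a standing assumption in the paper, see Convention~\ref{conv:convention1}) together with square-freeness of $(X,r)$ to rule out the case ${}^x({}^yx)={}^xy$ (it forces $x=y$) and to deduce ${}^yx=x$ from the remaining case ${}^x({}^yx)=x$. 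Once this is spelled out, your conclusion that $r$ is the flip follows.
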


Let  $(X,r)$ be a non-degenerate symmetric set, let
$(S, r_S)$  be the associated graded braided monoid, where we consider the
natural grading by length given in (\ref{eq:Sgraded}):
\[S = \bigsqcup_{d\in\N_0}  S_{d},\
S_0 = \{1\},  S_1 = X,\ \mbox{and} \    S_{k}S_{m} \subseteq S_{k+m}.\]
Each of the graded components $S_d, \; d \geq 1$, is $r_S$-invariant. Consider
the restriction
$r_d = (r_S)_{|S_d\times S_d}$, where $r_d$ is the map
$r_d: S_d\times S_d\longrightarrow S_d\times S_d$.

\begin{cor}
\label{dVeron_monoid}
Let $(X,r)$ be a solution of YBE. Then the following conditions hold.
\begin{enumerate}
\item
For every positive integer $d \geq 1$, $(S_d, r_d)$ is a solution of YBE (a nondegenerate
symmetric set).
Moreover, if $(X,r)$ is of finite order $n$, then $(S_d, r_d)$ is a finite solution of YBE of order
\begin{equation}
\label{eq:fixedwords2}
   |S_d|  = \binom{n+d-1}{d} = N.
\end{equation}
\item  The number of fixed points is
$|\Fcal (S_d, r_d)|=  N.$
\end{enumerate}
\end{cor}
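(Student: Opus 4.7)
My plan is to deduce the corollary directly from the structural results for the braided monoid $(S, r_S)$ recorded in Fact \ref{theoremA}, together with the dimension count of Corollary \ref{cor:dimAd} and the fixed-point enumeration of Corollary \ref{cor:orbits_and_fixedpoints}. Essentially, the statement is a bookkeeping extraction of what happens on a single graded piece $S_d$ of $S(X,r)$.

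First I would verify that $r_d := (r_S)_{|S_d \times S_d}$ is a well-defined bijection of $S_d \times S_d$. Writing $r_S(u,v) = ({}^uv, u^v)$, the grading compatibility $|{}^uv| = |v|$ and $|u^v| = |u|$ from \eqref{eq:braided_monoid2} shows that $r_S$ preserves $S_d \times S_d$; its inverse on this set is the restriction of $r_S^{-1} = r_S$ by involutivity. Next, the symmetric-set axioms pass to the restriction essentially for free: involutivity of $r_d$ is immediate from $r_S^2 = \id$, and the braid identity on $S_d^3$ is simply the restriction of the braid identity on $S^3$ asserted in Fact \ref{theoremA}(1). Nondegeneracy requires only the observation that for any $a\in S_d$ the bijections $\Lcal_a, \Rcal_a : S \to S$ supplied by Fact \ref{theoremA}(2)(b) preserve length by \eqref{eq:braided_monoid2}, hence restrict to bijections on $S_d$. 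This gives that $(S_d, r_d)$ is a nondegenerate symmetric set.

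For the cardinality formula when $|X|=n$, I would combine the graded vector space isomorphism $\cA \cong \textbf{k}S(X,r)$ (which gives $|S_d| = \dim \cA_d$) with Corollary \ref{cor:dimAd}, yielding $|S_d| = \binom{n+d-1}{d} = N$. Part (2) is then an immediate application of Corollary \ref{cor:orbits_and_fixedpoints}(1) to the finite nondegenerate symmetric set $(S_d, r_d)$: a set of this type has exactly one $r$-fixed point for each of its $|S_d|$ elements, so $|\Fcal(S_d, r_d)| = N$.

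I do not expect any substantial obstacle here; the only point that merits explicit checking is the compatibility of nondegeneracy with restriction to $S_d$, and this is exactly what the grading property \eqref{eq:braided_monoid2} delivers.
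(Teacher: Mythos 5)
Your proof is correct and follows exactly the route the paper intends: the paper states this corollary without proof, treating it as immediate from Fact \ref{theoremA} (the graded braided monoid structure, the equivalences for nondegeneracy and involutivity, and the length-preservation \eqref{eq:braided_monoid2}), together with Corollary \ref{cor:dimAd} for the cardinality and Corollary \ref{cor:orbits_and_fixedpoints}(1) for the fixed-point count. Your explicit check that the left and right actions restrict to bijections of $S_d$ is precisely the small verification the paper leaves to the reader, and it is carried out correctly.
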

\begin{dfn}
\label{def:VeroneseSol}
We call  $(S_d,r_d)$ \emph{the monomial $d$-Veronese solution associated with
$(X,r)$}.
\end{dfn}
The monomial $d$-Veronese solution $(S_d, r_d)$ depends only on the map $r$
and on the integer $d$, it is invariant with respect to the enumeration of $X$.
Although $(S_d, r_d)$ is intimately connected with the $d$-Veronese subalgebra
$\cA^{(d)}$ and its quadratic relations, this solution is not convenient for an
explicit description of the relations. Its rich structure inherited from the
braiding in $(S,r_S)$ is used in the proof of Theorem \ref{thm:Sd_square-free}.
The solution $(S_d, r_d)$  induces in a natural way an isomorphic solution
$(\cN_{d},
\rho_d)$ and the fact that $\cN_{d}$ is ordered lexicographically makes this
solution convenient for our description of the  relations of $\cA^{(d)}$. Note
that the set $\cN_{d}$, as a subset of the set of normal monomials $\cN$, depends
on the initial enumeration of $X$.  We shall construct $(\cN_{d},
\rho_d)$ below.

\begin{rmk}
    \label{rmk:the actions}
Note that given the monomials $a = a_1a_2 \cdots a_p \in X^p$, and $b=
b_1b_2\cdots b_q \in X^q$ we can find effectively the monomials  ${}^{a}{b}\in
X^q$ and $a^b \in X^p$. Indeed, as in \cite{GIM08}, we use the conditions
(\ref{eq:braided_monoid}) to extend the left and the right actions inductively:

\begin{equation}
\label{eq:effective_actions}
\begin{array}{lll}
{}^c{(b_1b_2\cdots b_q)}=({}^cb_1)({}^{c^{b_1}}b_2) \cdots ({}^{(c^{({b_1}\cdots
b_{q-1})})}b_q)),\quad \text{for all $c \in X$}\\
&&\\
{}^{(a_1a_2 \cdots a_p)}b= {}^{a_1}{({}^{(a_2 \cdots a_p)}b)}.
\end{array}
\end{equation}
We proceed similarly with the right action.
\end{rmk}

\begin{lem}
   \label{lem:Daction}
Notation as in Remark  \ref{orbitsinG}.
   Suppose $a, a_1\in X^p,  a_1 \in \Ocal_{\Dcal_p} (a)$, and $b, b_1 \in X^q,
   b_1 \in \Ocal_{\Dcal_q} (b)$,
\begin{enumerate}
\item The following are equalities of words in the free monoid $\asX$:
\begin{equation}
  \label{eq:Noractions}
  \Nor ({}^{a_1}{b_1}) = \Nor ({}^{a}{b}), \quad \Nor ({a_1}^{b_1}) = \Nor
  ({a}^{b}).
\end{equation}
In partricular, if $a, a_1\in X^p$ and $b, b_1 \in X^q$ the equalities $a=a_1$
in $S$ and $b=b_1$ in $S$ imply that
${}^{a_1}{b_1} ={}^ab$ and $a_1^{b_1}= a^b$ hold in $S$.
\item  The following are equalities in the monoid $S$:
 \begin{equation}
  \label{eq:M3Noractions}
  ab = {}^{a}{b}a^b =  \Nor ({}^{a}{b})\Nor ({a}^{b}).
  \end{equation}
\end{enumerate}
\end{lem}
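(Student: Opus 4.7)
The plan is to reduce everything to two facts about the braided monoid $(S,r_S)$ already established: (a) by Fact \ref{theoremA}, the left and right actions of $S$ on $S$ are well-defined on $S$-classes, i.e.\ the class of ${}^ab$ and of $a^b$ in $S$ depends only on the classes of $a$ and $b$; and (b) by Remark \ref{orbitsinG}(i)–(ii), two words in $\asoX$ coincide in $S$ if and only if they have the same length $m$ and lie in a common $\Dcal_m$-orbit in $X^m$, if and only if they share the same normal (minimal) monomial $\Nor(\cdot)$.

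For part (1), I would first note that $a_1\in\Ocal_{\Dcal_p}(a)$ and $b_1\in\Ocal_{\Dcal_q}(b)$ are precisely the statements $a=a_1$ in $S$ and $b=b_1$ in $S$. By (a), ${}^{a_1}b_1={}^ab$ and $a_1^{b_1}=a^b$ as elements of $S$. Because $(S,r_S)$ is graded (see \eqref{eq:braided_monoid2}), these are equalities between words of lengths $q$ and $p$, respectively. Invoking (b), two words of the same length are equal in $S$ iff they lie in a common $\Dcal$-orbit, iff they have the same normal form; this yields $\Nor({}^{a_1}b_1)=\Nor({}^ab)$ and $\Nor(a_1^{b_1})=\Nor(a^b)$ as equalities of words in $\asoX$. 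The ``In particular'' clause is then immediate, since equality of normal forms is equivalent to equality in $S$.

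For part (2), I would invoke the M3 axiom for the braided monoid recorded in \eqref{eq:braided_monoid}: $uv={}^uv\cdot u^v$ in $S$ for every $u,v\in S$. Applied to $u=a$, $v=b$ this gives $ab={}^ab\cdot a^b$ in $S$. Finally, since any $w\in\asoX$ satisfies $w=\Nor(w)$ in $S$ (both being in the same $\Dcal_{|w|}$-orbit), each of the factors ${}^ab$ and $a^b$ may be replaced by its normal form, producing $ab=\Nor({}^ab)\,\Nor(a^b)$ in $S$.

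The argument is short and essentially bookkeeping; there is no genuine obstacle, but the main point that needs care is to keep the distinction between equality of words in the free monoid $\asoX$ and equality in $S$. The words ${}^{a_1}b_1$ and ${}^ab$ computed by the recursive formulas \eqref{eq:effective_actions} need not coincide letter-by-letter in $\asoX$; what the lemma records is that their $S$-classes, equivalently their $\Dcal_q$-orbits in $X^q$, equivalently their normal forms, do coincide.
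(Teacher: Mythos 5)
Your proposal is correct and follows essentially the same route as the paper: part (1) reduces to the well-definedness of the extended actions on $S$-classes (the paper cites \cite{GIM08}, Proposition 3.11, which is the content of Fact \ref{theoremA}) combined with the orbit/normal-form characterization of equality in $S$, and part (2) is exactly the M3 axiom followed by replacing each factor with its normal form. Your closing remark correctly identifies the only point requiring care, namely the distinction between equality of words in $\asoX$ and equality in $S$.
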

\begin{proof}
By  Remark  \ref{orbitsinG} there is an equality $a = a_1$ in $S$  \emph{iff}
$a_1 \in \Ocal_{\Dcal_p} (a)$, in this case $\Ocal_{\Dcal_p} (a)=\Ocal_{\Dcal_p}
(a_1)$. At the same time  $a =a_1$ in $S$ \emph{iff} $\Nor(a_1) = \Nor(a)$ as
words in $X^p$, in particular,
$\Nor(a) \in  \Ocal_{\Dcal_p} (a)$.
Similarly, $b_1 = b$ in $S$  \emph{iff}  $b_1 \in \Ocal_{\Dcal_q} (b)$, and in
this case $\Nor(b) = \Nor(b_1)\in \Ocal_{\Dcal_q} (b)$.
Part (1) follows from the properties of the actions in $(S, r_S)$ studied in
\cite{GIM08}, Proposition 3.11.

(2) $(S, r_S)$ is an M3- braided monoid, see Fact  \ref{theoremA}, so condition
M3 implies  the first equality in (\ref{eq:M3Noractions}).
Now  (\ref{eq:Noractions}) implies the second equality in
(\ref{eq:M3Noractions}).
\end{proof}

\begin{defnotation}
\label{def:rho}
In notation and conventions as above. Let $d\geq 1$ be
an integer. Suppose $(X,r)$ is a solution of order $n$, $\cA =
\cA(\textbf{k}, X,r)$, is the associated Yang-Baxter algebra, and $(S, r_S)$ is
the associated braided monoid.
By convention we identify $\cA$ with $(\textbf{k} \cN, \bullet )$ and $S$ with
$(\cN, \bullet )$.
Define a left "action" and a right "action" on $\cN_{d}$
as follows.
\begin{equation}
  \label{eq:actions}
  \begin{array}{lll}
\la : \cN_{d} \times \cN_{d} \longrightarrow \cN_{d},   & a \la  b := Nor
({}^ab)\in \cN_{d}, & \forall a, b \in \cN_{d}\\
  \ra : \cN_{d} \times \cN_{d} \longrightarrow \cN_{d}, &  a \ra  b := Nor(a^b)
  \in \cN_{d}, & \forall a, b \in \cN_{d}.\\
 \end{array}
\end{equation}
It follows from Lemma \ref{lem:Daction}  (1)  that the two actions are
well-defined.

Define the map
\begin{equation}
  \label{eq:rho}
\rho_d: \cN_{d} \times \cN_{d} \longrightarrow \cN_{d} \times \cN_{d},
 \quad  \rho_d(a,b) := (a \la  b, a \ra  b).
\end{equation}
For simplicity of notation (when there is no ambiguity) we shall often write
$(\cN_{d}, \rho)$, where $\rho=\rho_d.$
\end{defnotation}
\begin{dfn}
\label{def:normalizedSol}
We call  $(\cN_d,\rho_d)$ \emph{the normalised $d$-Veronese solution associated with
$(X,r)$}.
\end{dfn}

\begin{pro}
\label{dVeron_monoid2}
In assumption and notation as above.
\begin{enumerate}
\item Let $\rho_d: \cN_d\times \cN_d \longrightarrow \cN_d\times \cN_d$ be the map defined as
$\rho_d (a, b) = (a \la  b, a \ra  b)$.  Then $(\cN_{d}, \rho_d)$ is a
solution of YBE of order $|\cN_{d}|=\binom{n+d-1}{d}=N$.
\item $(\cN_{d}, \rho_d)$  and $(S_d, r_d)$ are
    isomorphic solutions of YBE.
\end{enumerate}
\end{pro}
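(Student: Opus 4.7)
The plan is to deduce both parts simultaneously by exhibiting a natural bijection $\varphi: \cN_d \to S_d$ and verifying that it intertwines $\rho_d$ with $r_d$; part (1) will then be inherited from the already-established fact (Corollary \ref{dVeron_monoid}) that $(S_d, r_d)$ is a nondegenerate symmetric set of order $N = \binom{n+d-1}{d}$.

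First I would construct $\varphi$. Recall from Remark \ref{orbitsinG} that two words in $X^d$ are equal in $S$ if and only if they lie in the same $\Dcal_d(r)$-orbit; the normal monomials of length $d$ are precisely the minimal (with respect to the degree-lex order $<$) representatives of these orbits. Composing the inclusion $\cN_d \hookrightarrow \asoX$ with the projection onto $S$ therefore yields a well-defined injection $\varphi: \cN_d \to S_d$. It is also surjective: every class in $S_d$ has a unique normal representative. Counting, $|\cN_d| = \dim \cA_d = \binom{n+d-1}{d} = |S_d| = N$ by Corollary \ref{cor:dimAd} and Corollary \ref{dVeron_monoid}(1), confirming that $\varphi$ is a bijection.

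Next I would verify the intertwining relation $(\varphi \times \varphi) \circ \rho_d = r_d \circ (\varphi \times \varphi)$. For $a, b \in \cN_d$, by Definition-Notation \ref{def:rho} we have
\[
\rho_d(a,b) = \bigl(\Nor({}^a b),\; \Nor(a^b)\bigr).
\]
Viewing $a, b$ as elements of $S_d$ via $\varphi$, the graded braided monoid structure (Fact \ref{theoremA}) gives $r_d(a,b) = ({}^a b, a^b)$ in $S_d \times S_d$, and by Lemma \ref{lem:Daction}(1) the classes ${}^a b$ and $a^b$ in $S_d$ are represented (uniquely) by their normal forms $\Nor({}^a b)$ and $\Nor(a^b)$ in $\cN_d$. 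Thus $\varphi$ carries $\rho_d$ to $r_d$ coordinatewise, and Lemma \ref{lem:Daction} also guarantees that $\rho_d$ does not depend on any choice of representatives, so $\rho_d$ is well-defined on $\cN_d \times \cN_d$. This establishes part (2).

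Part (1) is then immediate by transport of structure. Since $(S_d, r_d)$ is involutive, satisfies YBE, and has bijective left/right actions (Corollary \ref{dVeron_monoid}), the same holds for $(\cN_d, \rho_d)$: involutivity $\rho_d^2 = \id$ and the braid relation on $\cN_d^{\times 3}$ follow by conjugating the corresponding identities in $S_d$ by $\varphi^{\times 2}$ and $\varphi^{\times 3}$ respectively, and nondegeneracy of $\rho_d$ follows from bijectivity of the maps $\Lcal_a$ and $\Rcal_b$ in $(S_d, r_d)$. The order equality $|\cN_d| = N$ has already been noted. The only subtle point is the consistency of taking normal forms inside the formulas defining $\la$ and $\ra$, but this is precisely the content of Lemma \ref{lem:Daction}, so the proof is essentially bookkeeping once $\varphi$ is in place.
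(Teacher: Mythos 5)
Your proof is correct and follows essentially the same route as the paper's: both rest on the normal-form bijection between $S_d$ and $\cN_d$ (your $\varphi$ is the inverse of the paper's map $\Nor : S_d \to \cN_d$), on Lemma \ref{lem:Daction} for the well-definedness of $\la$, $\ra$ and their compatibility with normal forms, and on Corollary \ref{dVeron_monoid} for the properties of $(S_d, r_d)$. The only difference is organizational: the paper verifies the conditions \textbf{l1}, \textbf{r1}, \textbf{lr3}, \textbf{inv} for $(\cN_d,\rho_d)$ directly and then exhibits the isomorphism, whereas you establish the isomorphism first and obtain part (1) by transport of structure.
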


\begin{proof}
(1) By Corollary \ref{dVeron_monoid} $(S_d, r_d)$ is a nondegenerate symmetric
set, that is a solution of YBE. Thus by Remark \ref{rmk:YBE1} the left and the right actions associated
with $(S_d, r_d)$ satisfy conditions \textbf{l1}, \textbf{r1}, \textbf{lr3}, and
\textbf{inv}.
 Consider the actions $\la$ and $\ra$ on  $\cN_{d}$, given in
 Definition-Notation \ref{def:rho}. It follows from  (\ref{eq:actions}) and
 Lemma \ref{lem:Daction} that these actions also satisfy \textbf{l1},
 \textbf{r1}, \textbf{lr3} and  \textbf{inv}.
Therefore, by Remark \ref{rmk:YBE1} again, $ \rho_d$ obeys YBE, and is
involutive, so $(\cN_{d}, \rho_d)$ is a symmetric set.
Moreover, the nondegeneracy of $(S_d, r_d)$  implies that $(\cN_{d}, \rho_d)$
is nondegenerate.
By Corollary \ref{eq:dimAd} there are equalities  $|\cN_{d}|=|S_d|=
\binom{n+d-1}{d}=N$.

(2) We shall prove that the map $\Nor : S_d \longrightarrow \cN_d, \quad u
\mapsto \Nor (u)$ is an isomorphism of solutions. It is clear that the map is
bijective.
We have to show that $\Nor$ is a homomorphism of solutions, that is
\begin{equation}
  \label{eq:isom}
(\Nor \times \Nor) \circ r_d =  \rho_d\circ (\Nor \times \Nor).
\end{equation}
Let $(u, v)\in S_d\times S_d$, then the equalities $u =\Nor(u)$ and $v =
\Nor(v)$ hold in $S_d$, so
\[
\Nor ({}^uv)= \Nor ({}^{\Nor (u)}{\Nor (v)}), \quad  \Nor (u^v) =
\Nor({\Nor(u)}^{\Nor (v)})\]
which together with (\ref{eq:actions}) imply
\[\begin{array}{ll}
(\Nor \times \Nor) \circ r_d (u,v) &=  \Nor \times \Nor ({}^uv, u^v) =
(\Nor({}^uv), \Nor(u^v))\\
                                                &=(\Nor(u)\la \Nor(v), \Nor(u)
                                                \ra \Nor(v)) = \rho_d( \Nor (u),
                                                \Nor (v)).
\end{array}
\]
This proves (\ref{eq:isom}).
\end{proof}

Recall that monomials in $\cN_d$ are ordered lexicographically, $\cN_d := \{ w_1 < w_2< \dots < w_N \}$
see
\ref{eq:deg-d-monomials}
and we shall use this order throughout the paper.
\begin{pro}
\label{dVeron_monoid3}
In assumption and notation as above. Let $(\cN_d, \rho_d)$ be the normalised $d$-Veronese solution, see Definition \ref{def:normalizedSol}.
Then the Yang-Baxter algebra $B=\cA(\textbf{k}, \cN_{d}, \rho_d)$ is generated by
the set $\cN_{d}$ and has $\binom{N}{2}$  quadratic defining relations given
below:
\begin{equation}
  \label{eq:rel_rho}
\begin{array}{ll}
\Re=  \{f_{ji}= &w_jw_i - w_{i^{\prime}}w_{j^{\prime}}  \mid 1 \leq i,j \leq n,\; \text{where }\ \\
                &\rho_d(w_j, w_i)=(w_{i^{\prime}},w_{j^{\prime}}), \; \text{and }\;
                      w_j > w_{i^{\prime}} \; \text{holds in}\;\asX\}.
\end{array}
\end{equation}
Moreover,
 \begin{enumerate}

    \item[(i)] There is a 1-to 1 correspondence between the set of relations $\Re$ and the set of non-trivial $\rho_d$-orbits in $\cN_{d}\times \cN_{d}$;
        \item[(ii)] for every pair $(a, b)\in (\cN_{d} \times \cN_{d})\setminus
     \Fcal(\cN_d, \rho_d)$ the monomial $ab$ occurs exactly once in $\Re$;
\item[(iii)] Every relation  $f_{ji}$ has leading monomial $\LM(f_{ji}) =
    w_jw_i$.
\end{enumerate}
\end{pro}

\begin{proof}
 For simplicity of notation we set $\rho_d= \rho$.
 It is clear that there is a one-to-one correspondence between the set of relations of the
 algebra $B$ and the set of nontrivial orbits of the map $\rho$.

By definition each nontrivial relation of the Yang-Baxter algebra $B$ corresponds to a nontrivial orbit of $\rho$, and vice versa.
Say
\[\Ocal =\{(w_j,  w_i), \rho(w_j,  w_i) = (w_{i^{\prime}}, w_{j^{\prime}})\}=
\{(w_{i^{\prime}}, w_{j^{\prime}}), \rho(w_{i^{\prime}}, w_{j^{\prime}}) =
(w_j,  w_i)\},\]
and without loss of generality we may assume that the relation is
\[w_jw_i - w_{i^{\prime}}w_{j^{\prime}},  \; \text{where}\;  w_jw_i >
w_{i^{\prime}}w_{j^{\prime}}.\]

By Lemma \ref{lem:Daction} (2) the equality $w_jw_i =
w_{i^{\prime}}w_{j^{\prime}}$ holds in $S$.
 The monoid $S=S(X,r)$ is cancellative, see Remark \ref{fact1}.
hence an assumption that
$w_j = w_{i^{\prime}}$ would imply $w_i = w_{j^{\prime}}$, a contradiction.
Therefore $w_j > w_{i^{\prime}}$.

 Conversely, if
 $\cdot \rho(w_j,  w_i) =
 w_{i^{\prime}}w_{j^{\prime}}$ and $w_j > w_{i^{\prime}}$  then   $f_{ji}$ is a (nontrivial)
 relation of the algebra $B =B(\textbf{k}, \cN_{d}, \rho_d)$.
Clearly,  $w_j > w_{i^{\prime}}$ implies $w_jw_i >
 w_{i^{\prime}}w_{j^{\prime}}$ in $\asX$, so  $\LM(f_{ji}) = w_jw_i$, and the
 number of relations $g_{ji}$ is exactly
 $\binom{N}{2}$.
 \end{proof}

\subsection{The $d$-Veronese $\cA^{(d)}$  presented in terms of generators and
relations}

In Convention \ref{rmk:conventionpreliminary1}  and Notation as above the following result describes the $d$-Veronese $\cA^{(d)}$
of the Yang-Baxter algebra $\cA$ in terms of one-generators and quadratic
relations.

\begin{thm}
    \label{thm:d-Veronese_relations}  Let $d\geq 2$ be an integer. Let $(X,r)$ be
    a finite solution of YBE, where $X= \{x_1, \cdots, x_n\}$, let $\cA= \cA(\textbf{k}, X, r)$
    be its Yang-Baxter algebra, and let $(\cN_d, \rho)$ be the normalised $d$-Veronese solution
    from Definition \ref{def:normalizedSol}, where $\cN_d= \{w_1, \cdots, w_N\}$ is the set of normal monomials of length $d$ ordered lexicographically.

The $d$-Veronese subalgebra $\cA^{(d)} \subseteq \cA$
is a quadratic algebra with $N=\binom{n+d-1}{d}$ one-generators, namely the set
$\cN_{d}$ of normal monomials  of length $d$,
subject to $N^2 -\binom{n+2d-1}{n-1}$ linearly independent quadratic  relations
$\cR$ described below.
 \begin{enumerate}
 \item
 The relations $\cR$ split into two disjoint subsets $\cR= \cR_{a}
 \bigcup\cR_{b}$, as follows.
 \begin{enumerate}
 \item
 \label{thm:d-Veronese_relations_Ra}
The set $\cR_{a}$ contains $\binom{N}{2}$  relations corresponding
to the non-trivial $\rho$-orbits:
 \begin{equation}
  \label{eq:fji}
\begin{array}{ll}
\cR_{a}=  \{f_{ji}= &w_jw_i - w_{i^{\prime}}w_{j^{\prime}}  \mid 1 \leq i,j \leq n,\; \text{where } \\
                &\rho(w_j, w_i)=(w_{i^{\prime}},w_{j^{\prime}}), \; \text{and }\;
                      w_j > w_{i^{\prime}} \; \text{holds in}\;\asX\}.
\end{array}
\end{equation}
Each monomial $w_iw_j$, such that $(w_i, w_j)$ is in a nontrivial
$\rho$-orbit occurs exactly once in  $\cR_{a}$. Every relation  $f_{ji}$ has leading monomial $\LM(f_{ji}) =
    w_jw_i$.
    \item
 \label{thm:d-Veronese_relations_Rb}
The set $\cR_{b}$ contains $\binom{N+1}{2} -\binom{n+2d-1}{n-1}$
relations
 \begin{equation}
  \label{eq:gij}
  \begin{array}{ll}
\cR_{b} = \{g_{ij} = &w_i w_j -w_{i_0}w_{j_0}\mid 1 \leq i,j \leq n,\; \text{where }\; \cdot\rho(w_i, w_j)\geq w_iw_j, w_{i_0}, w_{j_0} \in\cN_d, \\
                     &\text{and } w_iw_j > \Nor(w_iw_j) = w_{i_0}w_{j_0} \in \cN_{2d}\; \text{is the normal form of }\; w_{i}w_{j}\}
\end{array}
\end{equation}
 In particular,
 $\LM(g_{ij}) = w_iw_j >  w_{i_0}w_{j_0}.$
\end{enumerate}

 \item
 The d-Veronese subalgebra $\cA^{(d)}$  has a second set
 of linearly independent quadratic relations, $\cR_{1}$, which splits into two
 disjoint subsets $\cR_{1}= \cR_{1a} \bigcup\cR_{b}$ as follows.

\begin{enumerate}
 \item
 \label{thm:d-Veronese_relations1a}
 The set $\cR_{1a}$ is \emph{a reduced version} of $\cR_{a}$ and contains
 exactly $\binom{N}{2}$  relations
 \begin{equation}
  \label{eq:gji}
  \begin{array}{ll}
\cR_{1a} = \{ g_{ji} &= w_jw_i -w_{i^{\prime\prime}}w_{j^{\prime\prime}}
          \mid 1 \leq i,j \leq n,\; \text{where } \\
                &\cdot\rho(w_j, w_i)< w_jw_i
                \; \text{and }\; w_{i^{\prime\prime}}w_{j^{\prime\prime}}=\Nor(w_jw_i)\in \cN_{2d}
                     \}.
                     \end{array}
\end{equation}
 In particular, $LM(g_{ji}) = w_jw_i > w_{i^{\prime\prime}}w_{j^{\prime\prime}}\in
 \cN_{2d}.$
\item
\label{thm:d-Veronese_relations1b}
The set $\cR_{b}$ is given in
(\ref{eq:gij}).
\end{enumerate}

\item  The two sets of relations $\cR$ and $\cR_{1}$  are equivalent:
$\cR  \Longleftrightarrow  \cR_1$.
\end{enumerate}
    \end{thm}

\begin{proof}
We start with a general observation.
By Convention
\ref{rmk:conventionpreliminary1}  we identify the algebra  $\cA$ with
$(\textbf{k}\cN,   \bullet ).$
We know that the $d$-Veronese subalgebra $\cA^{(d)}$ is one-generated and
quadratic, see Corollary \ref{cor:dVeron}.
By (\ref{eq:A^d})
\[
\cA^{(d)} = \bigoplus_{m\in\N_0}  \cA_{md} \cong \bigoplus_{m\in\N_0}
\textbf{k}\cN_{md}.
\]
So $\cA^{(d)}_1 =\textbf{k}\cN_{d}$ and the ordered monomials
$w \in \cN_d$ of length $d$ are degree one generators of $\cA^{(d)}$. There are
equalities
\[
\dim \cA_d = |\cN_d |=\binom{n+d-1}{d} =N.
\]

Moreover,
\[
\dim (\cA^{(d)})_2 = \dim(\cA_{2d}) = \dim (\textbf{k}\cN_{2d}) = |\cN_{2d}|=\binom{n+2d-1}{n-1}.
\]

We want to find  a finite  presentation in terms of generators and relations
\[\cA^{(d)} = \textbf{k}\langle w_1, \cdots, w_N\rangle/ (R),\]
where the two-sided (graded) ideal $I= (R)$ is generated by linearly independent homogeneous relations $R$ of degree $2$ in the variables $w_i$ so
$I_2 = \Span_{\textbf{k}} R.$
\textbf{1.} We compare dimensions
to find the number of quadratic linearly independent relations for the
$d$-Veronese $\cA^{(d)}$.
The equality of vectors spaces \[\textbf{k}\langle w_1, \cdots, w_N\rangle = I \oplus (\bigoplus_{m\in\N_0}
\textbf{k}\cN_{md})\]
implies an equality for the graded components
\[
(\textbf{k}\langle w_1, \cdots, w_N\rangle)_2 = I_2 \oplus \textbf{k}\cN_{2d}
\]
Hence if $R$ is a set of linearly independent quadratic relations  defining
$\cA^{(d)}$, that is $R$ generates the ideal of relations $I= (R)$ there must be an equality
$|R|+\dim \cA^{(d)}_2= N^2$, so
\begin{equation}
  \label{eq:card_R}
|R|= N^2-  \binom{n+2d-1}{n-1}.
\end{equation}
We shall prove that the set of quadratic polynomials $\cR= \cR_{a}
\bigcup\cR_{b}$ given above consists of relations of $\cA^{(d)}$, it has order
$|\cR| =  N^2-  \binom{n+2d-1}{n-1},$ and is linearly independent.

\begin{enumerate}
 \item[(a)]  Observe that there is a 1-to-1 correspondence between the polynomials $f_{ji}\in \cR_{a}$ and the set on nontrivial $\rho$-orbits in
 $\cN_{d}\times \cN_{d}$, and therefore $\cR_a$ has exactly $\binom{N}{2}$ elements.
 The $\rho$-orbits in $\cN_d\times\cN_d$   are disjoint and therefore every monomial $w_iw_j$, with $1 \leq i,j \leq N$, such that $(w_i, w_j)$ is in a nontrivial $\rho$-orbit occurs exactly once in some polynomial $f \in \cR_a$.
 We claim that $\cR_a$ consists of relations of $\cA^{(d)}$.
Consider an element $f_{ji}=w_jw_i - w_{i^{\prime}}w_{j^{\prime}} \in  \cR_a $.  It is obvious that $f_{ji}$ is not identically zero in  $\textbf{k}\langle w_1, \cdots, w_N\rangle$.
We have to show that $w_jw_i-w_{i^{\prime}}w_{j^{\prime}}=0$, or
equivalently,  $w_jw_i=w_{i^{\prime}}w_{j^{\prime}}$ holds
in $\cA^{(d)}$. But $\cA^{(d)}$ is a subalgebra of the Yang-Baxter algebra $\cA$ which is isomorphic to the monoid algebra $\textbf{k}S$.
Thus it  will be enough to prove that
\begin{equation}
  \label{eq:rel1}
w_jw_i= w_{i^{\prime}}w_{j^{\prime}}\quad \text{is an equality in $S$}.
\end{equation}
Note that $\cN$ is a subset of $\langle X\rangle$ and $a=b$ in $\cN$ is
equivalent to $a,b\in \cN$ and $a=b$ as words in $\langle X\rangle$. Clearly,
each equality of words in $\langle X\rangle$ holds also in $S$.

By assumption
\begin{equation}
  \label{eq:rel2}
 \rho(w_j,w_i) =(w_{i^{\prime}}, w_{j^{\prime}})\quad \text{holds in
 }\cN_d\times\cN_d.
 \end{equation}
By Definition-Notation \ref{def:rho}, see  (\ref{eq:actions}) and (\ref{eq:rho})
one has
\begin{equation}
  \label{eq:rel3}
 \rho(w_j,w_i) = (\Nor({}^{w_j}{w_i}),\Nor(w_j^{w_i})),\;\text{in}\;
 \cN_d\times\cN_d
\end{equation}
and comparing (\ref{eq:rel2}) with (\ref{eq:rel3}) we obtain that
\begin{equation}
  \label{eq:rel4}
 \Nor({}^{w_j}{w_i})=w_{i^{\prime}},\;\text{and}\;
 \Nor(w_j^{w_i})=w_{j^{\prime}}\; \text{are equalities of words
 in}\;\cN_d\subset X^d.
\end{equation}

The equality $u = \Nor(u)$ (the normal form of $u$, modulo the ideal $I$) holds in $S$ and in $\cA$, for every $u \in \langle
X\rangle$, therefore the following are equalities in $S$:
\begin{equation}
  \label{eq:rel5}
  \begin{array}{lcl}
 \Nor({}^{w_j}{w_i})= {}^{w_j}{w_i},&& \Nor(w_j^{w_i})=w_j^{w_i} \\
 (\Nor({}^{w_j}{w_i}))(\Nor(w_j^{w_i}))&=& ({}^{w_j}{w_i})(w_j^{w_i}).
 \end{array}
\end{equation}
Now
(\ref{eq:rel4}) and (\ref{eq:rel5}) imply that
  \begin{equation}
  \label{eq:rel6}
  w_{i^{\prime}}w_{j^{\prime}}=({}^{w_j}{w_i})(w_j^{w_i}) \quad \text{holds
  in}\; S.
 \end{equation}
But $S$ is an M3- braided monoid, so by condition  (\ref{eq:braided_monoid}) M3,
the following is an equality in $S:$
\begin{equation}
  \label{eq:rel7}
  w_jw_i =({}^{w_j}{w_i})(w_j^{w_i}).
 \end{equation}
This together with (\ref{eq:rel6}) imply the desired equality
$
w_jw_i= w_{i^{\prime}}w_{j^{\prime}}$ in $S$.
It follows that $f_{ji}= w_jw_i- w_{i^{\prime}}w_{j^{\prime}}$ is identically
$0$ in
$\cA$ and therefore in $\cA^{(d)}$.

Clearly, for $f_{ji}= w_jw_i- w_{i^{\prime}}w_{j^{\prime}}$, the  inequality $w_j > w_{i^{\prime}}$ implies that  $w_jw_i>w_{i^{\prime}}w_{j^{\prime}}$ as elements of $\asX$, so
 the leading monomial of every relation $f_{ji} \in \cR_a$ is
$\LM(f_{ji}) = w_jw_i$.

\item[(b)]
Next we consider the elements  $g_{ij} = w_i w_j -w_{i_0}w_{j_0}\in \cR_{b}$. These are homogeneous polynomials of degree
$2d$. It follows from their description that $ w_iw_j >\Nor(w_iw_j)=w_{i_0}w_{j_0}$, so
their leading monomials satisfy $\LM(g_{ij}) = w_iw_j$.

Moreover, the description of $\cR_{b}$ implies that there is a 1-to-1 correspondence between the polynomials in $\cR_{b}$ and the set of all $\rho$- orbits $\cO$ \emph{which do not contain elements} $(a,b)\in \cN_d\times \cN_d$ such that $ab$ is in normal form,  that is $ab \in \cN_{2d}$.

If $\cO$ is such an orbit, then  $\cO = \{(w_i, w_j), \rho(w_i, w_j) \}$ where  $\cdot \rho(w_i, w_j)\geq w_iw_j$ and $w_iw_j$ is not in normal form. (In particular, $\cO$ can be a one-elemet orbit.) Clearly,
$w_i w_j= \Nor(w_iw_j)$ is an
identity in $\cA$, (and in $(\textbf{k}\cN,  \bullet )$). The normal form
$\Nor(w_iw_j)$ is a
monomial of length $2d$, so it can be written as
a product $\Nor(w_iw_j)= w_{i_0}w_{j_0},$ where $w_{i_0}, w_{j_0} \in \cN_d$,
It follows that
\[g_{ij} = w_i w_j -w_{i_0}w_{j_0}= 0
\]
is an identity in $\cA$ (and in $(\textbf{k}\cN,  \bullet )$).
Conversely, it follows from the description of $\cR_b$ that each relation $g_{ij} = w_i w_j -w_{i_0}w_{j_0}\in \cR_b$ determines uniquely the $\rho$- orbit $\cO = \{(w_i, w_j), \rho(w_i, w_j) \}$ with the above properties.
Note that
each $(a,b)\in \cN_d\times \cN_d$ such that  $ab \in \cN_{2d}$ belongs to exactly one orbit so the number of such orbits equals the cardinality
\[|\cN_{2d}|= \binom{n+2d-1}{n-1}.\]

The number of all $\rho$- orbits in $\cN_d\times \cN_d$ (including the one-element orbits) is
$\binom{N+1}{2}$. Therefore the number of disjoint $\rho$- orbits which "produce" distinct leading monomials $w_iw_j$
for the (distinct) elements $g_{ij} = w_i w_j -w_{i_0}w_{j_0}\in \cR_{b}$ is exactly
\begin{equation}
  \label{eq:card_Rb}
\binom{N+1}{2} - \binom{n+2d-1}{n-1}= |\cR_{b}|.
\end{equation}
\end{enumerate}
The sets $\cR_{a}$ and $\cR_{b}$ are disjoint, since $\{\LM(f)\mid f \in
\cR_{a}\} \cap \{\LM(g)\mid g \in \cR_{b}\}= \emptyset.$
Therefore there are equalities:
\begin{equation}
\label{eq: card_cR}
|\cR|= |\cR_{a}| +|\cR_{b}| =\binom{N}{2} +(\binom{N+1}{2} -
\binom{n+2d-1}{n-1})= N^2 - \binom{n+2d-1}{n-1},
\end{equation}
so the set $\cR$ has exactly the desired number of relations  given in
(\ref{eq:card_R}).
It remains to show that
 $\cR$ consits of linearly independent elements
of $\textbf{k}\asX$.

\begin{lem}
    \label{lem:independence}
Under the hypothesis of Theorem \ref{thm:d-Veronese_relations},   the set of
polynomials $\cR \subset \textbf{k}\asX$ is linearly independent.
\end{lem}

\begin{proof}
It is well known that the set of all words in $\asX$ forms a basis of
$\textbf{k}\asX$ (considered as a vector space), in particular every finite set
of distinct words in $\asX$ is linearly independent. All words occurring in
$\cR$ are elements of $X^{2d}$, but some of them occur in more than one
relation, e.g. every $w_iw_j$, which is not a fixed point of $\rho$ but is the leading monomial of $g_{ij}\in \cR_b$, occurs also as a second term
of a polynomial $f_{pq}= w_pw_q - w_iw_j \in \cR_{a}$, where $\cdot \rho(w_i,w_j)
= w_pw_q>w_iw_j$
We shall prove the lemma in three steps.

\begin{enumerate}
\item
  The set of polynomials $\cR_{a}\subset \textbf{k}\asX$ is linearly
  independent.

Notice that the polynomials in $\cR_a$ are in 1-to-1 correspondence with the
nontrivial $\rho$-oprbits in $\cN_{d}\times \cN_{d}$:
Each polynomial $f_{ji} = w_jw_i -w_{i^{\prime}}w_{j^{\prime}}\in \cR_{a}$ is formed out
of the two monomials in the nontrivial $\rho$-oprbit
$\{(w_j, w_i) , (w_{i^{\prime}}, w_{j^{\prime}})= \rho(w_j, w_i)\}$. But the
$\rho$-orbits are disjoint, hence each monomial $\cdot(a,b)$,
with $(a,b) \neq \rho(a,b)$  occurs exactly once in $\cR_a.$
Present each $f\in \cR_a$ as $f = u_f -v_f,$ where $u_f = \LM(f)$. Then a linear relation
\[
0= \sum_{f \in \cR_a}\alpha_f f = \sum_{f \in\cR_a} \alpha_f (u_f -v_f) \text{where
all}\; \alpha_f \in \textbf{k}
 \]
involves only pairwise distinct monomials in $X^{2d}$ and therefore it must be
trivial: $\alpha_f= 0, \forall  f \in \cR_a $.
It follows that $\cR_a$ is linearly independent.
\item
The set  $\cR_{b} \subset \textbf{k}\asX$ is linearly independent. Assume the contrary.
Present each elements of $\cR_b$ as $g = u_g-v_g\in \cR_b$, where $u_g - \LM(g)$.
  Then there exists a nontrivial linear relation for the elements of $\cR_b:$
\begin{equation}
\label{eq:Rb1}
\sum_{g \in \cR_b} \beta_g g = \sum_{g \in \cR_b} \beta_g  (u_g-v_g) = 0
 \; \text{with}\; \beta_g
\in \textbf{k}.
\end{equation}
Let $g_{ij}$ be the polynomial with $\beta_{ij}= \beta_{g_{ij}}\neq 0$ whose leading monomial is the highest among all leading monomials of polynomials
$g\in \cR_b$, with $\beta_{g}\neq 0$.
So we have
\[
\LM(g_{ij}) = w_iw_j > \LM(g), \; \text{for all}\; g \in \cR_b,  g \neq g_{ij}\; \text{where}\; \beta _{g} \neq 0.
\]

We use (\ref{eq:Rb1})  to yield the following equality in $\textbf{k}\langle X \rangle$:
\[
w_iw_j =   w_{i_0}w_{j_0} -
\sum_{g \in \cR_b, \LM(g)< w_iw_j} \; \frac{\beta_{g}}{\beta_{ij}}\;(u_g-v_g).
\]

Observe that the right-hand side of this equality  is a linear combination of monomials strictly smaller than $w_iw_j$, which is impossible.
It follows that the set $\cR_{b} \subset \textbf{k}\asX$ is linearly independent.

\item The set  $\cR \subset \textbf{k}\asX$ is linearly independent.
For simplicity of notation (as before) we present every $f\in \cR_a$ and every $g\in \cR_b$ as
\[f= u_f-v_f, \text{where} \; u_f=\LM(f)> v_f,\; g= u_g-v_g, \text{where} \; u_g=\LM(g)> v_g.  \]

Assume the polynomials in $\cR$ satisfy a linear relation
\begin{equation}
  \label{eq:linindept1}
\sum_{f \in \cR_a}\alpha_f f +\sum_{g \in \cR_b}\beta_g g = 0, \;\text{where for all} f\in \cR_a, g \in \cR_b\;\; \alpha_f,
\beta_g \in \textbf{k}.
\end{equation}
This gives the following equality in the free associative algebra
$\textbf{k}\asX$:
\begin{equation}
  \label{eq:linindept2}
S_1 =\sum_{f \in \cR_a} \alpha_{f} u_f =
\sum_{f \in \cR_a} \alpha_{f} v_f -\sum_{g\in \cR_b}\beta_g g=S_2.
\end{equation}
The element $S_1 =\sum_{f \in \cR_a} \alpha_{f} u_f$
is in the space
$U=\Span B_1$,
where $B_1=\{\LM(f) \mid f \in \cR_a\}$ is linearly
independent.

The element \[S_2=\sum_{f \in \cR_a} \alpha_{f} v_f -\sum_{g\in \cR_b}\beta_g g\]
on the right-hand side of the equality is in the space $V=\Span B,$
where
\[
B=\{v_f\mid f \in \cR_a\}\cup
\{u_g, v_g\mid g \in \cR_b\}.
\]

Take a subset $B_2 \subset B$ which forms a basis of $V$. Note that
$B_1\cap B = \emptyset,$ hence $B_1\cap B_2=\emptyset.$
Moreover, each of the sets $B_1$, and $B_2$ consists of pairwise distinct and therefore linearly independ
monomials and it is easy to show that
$U\cap V = 0.$  Thus the equality $S_1 = S_2\in U\cap V = 0$ implies a linear
relation
\[S_1 =\sum_{f \in \cR_a} \alpha_{f} u_f =0
\]
for the set of leading monomials $u_f = \LM(f), f \in \cR_a$  which are pairwise distinct, and
therefore linearly independent. It follows that   $\alpha_f= 0$, for all $f \in \cR_a$.
 This together with (\ref{eq:linindept1}) implies the
linear relation
\[
\sum_{g \in \cR_b}\beta_g g = 0
\]
 and since by (2) $\cR_b$ is linearly independent we get again $\beta_g= 0,
 \forall  \; g \in \cR_b$.
It follows that
the linear relation (\ref{eq:linindept1}) must be trivial, and therefore
$\cR$ is a linearly independent set of polynomials.
\end{enumerate}
\end{proof}

It is now easy to see that  $\cR$ is a set of defining relation for the $d$- Veronese subalgebra $\cA^{(d)}$.

We know that $\cA^{(d)}$ is a quadratic algebra whose one-generators are the monomials $w_1, \cdots, w_N$, that is its ideal of relations $I$ is generated by homogeneous polynomials of degree 2 in the $w_i$'s.  Consider the ideal $J = (\cR)$ of the free associative algebra $\textbf{k}\langle w_1, \cdots, w_N\rangle$. We have proven that each element of $\cR$ is a relation of $\cA^{(d)}$, therfore $J\subseteq I$. To show that $J$ is the ideal of relations of $\cA^{(d)}$ it will be enough to verify thst there is an isomorphism of vector spaces:
\[(\cR)_2\oplus (\cA^{(d)})_2= (\textbf{k}\langle w_1, \cdots, w_N\rangle)_2,\]
or equivalently,
\[\dim \Span_{\textbf{k}}\cR + \dim (\cA^{(d)})_2 = \dim (\textbf{k}\langle w_1, \cdots, w_N\rangle)_2.\]
We have shown that $\cR$ is linearly independent, so
$\dim \Span_{\textbf{k}}\cR =|\cR|= N^2 - \binom{n+2d-1}{n-1}$, see \ref{eq: card_cR}. On the other hand
$\dim (\cA^{(d)})_2= \dim \cA_{2d}= \binom{n+2d-1}{n-1}$, see Corollary \ref{cor:dimAd}.
Therefore
\[\dim \Span_{\textbf{k}}\cR + \dim (\cA^{(d)})_2 = N^2 - \binom{n+2d-1}{n-1}+ \binom{n+2d-1}{n-1} = N^2 = \dim (\textbf{k}\langle w_1, \cdots, w_N\rangle)_2, \]
as desired. Therfore the set $\cR$ is a set of defining relations for the Veronese subalgebra $\cA^{(d)}$.
We have proven part (1) of the theorem.

Analogous argument proves part (2). Note that the polynomials of  $\cR_{1a}$ are
reduced from $\cR_{a}$ using $\cR_{b}$. It is not difficult to prove the equivalence $\cR  \Longleftrightarrow  \cR_1$.
 \end{proof}

\section{Veronese maps}
\label{sec:Veronesemap}

In this section we shall introduce an analogue of Veronese maps between quantum spaces (Yang-Baxter
algebras) associated to finite  solutions of YBE. We keep the notation and
all conventions from the previous sections. As usual, $(X,r)$ is a finite
solution of order $n$, $\cA= \cA(\textbf{k}, X, r)$ is the associated Yang-Baxter algebra,
where we fix an enumeration,
$X= \{x_1, \cdots, x_n\}$
  as in Convention \ref{rmk:conventionpreliminary1}, $d\geq 2$ is an integer,
  $N=\binom{n+d-1}{d}$, and
$\cN_d = \{w_1 < w_2 < \cdots < w_N\}$ is  the set of all normal monomials of length $d$ in
$X^d$  ordered lexicographically,   as in (\ref{eq:deg-d-monomials}).

\subsection{The $d$-Veronese solution of YBE associated to a finite solution
$(X,r)$}
\label{subsec:dveronesesolution}

We have shown that the braided monoid $(S, r_S)$ associated to $(X,r)$ induces
 the normalised $d$-Veronese solution $(\cN_{d}, \rho_d)$ of order $N=\binom{n+d-1}{d}$, see Definition \ref{def:normalizedSol}.
We shall use this construction to introduce the notion of \emph{a $d$-Veronese
solution of YBE associated to $(X,r)$}, denoted by $(Y, r_Y)$.

\begin{defnotation}
\label{def:Yrho}
In notation as above.
Let $(X,r)$ be a finite  solution,   $X = \{x_1, \cdots, x_n\}$, let $\cN_d =
\{w_1 < w_2 < \cdots w_N\}$ be the set of normal monomials of length $d$, and let $(\cN_d, \rho)=  (\cN_d, \rho_d)$ be the
normalised $d$-Veronese solution.
Let
$Y = \{y_1, y_2, \cdots, y_N\}$
be an abstract set and consider  the quadratic set $(Y, r_Y)$ ,  where
the map $r_Y:Y\times Y\longrightarrow Y\times Y$ is defined as
\begin{equation}
  \label{eq:rY}
r_Y (y_j, y_i) :=(y_{i^{\prime}},y_{j^{\prime}})\quad\text{\emph{iff}}\quad \rho
(w_j, w_i)= (w_{i^{\prime}},w_{j^{\prime}}), \; 1 \leq i,j,i^{\prime},
j^{\prime}\leq n.
\end{equation}

It is straightforward that  $(Y, r_Y)$ is a solution of YBE (a nondegenerate symmetric set) of order
$N$ isomorphic to $(\cN_d, \rho_d)$. We shall refer to it as \emph{the
$d$-Veronese solution of YBE associated to  $(X,r)$.}
\end{defnotation}

By Corollary \ref{cor:orbits_and_fixedpoints}  the set $Y\times Y$ splits into
$\binom{N}{2}$  two-element $r_Y$- orbits and $N$ one-element $r_Y$-orbits.

As usual, we consider the degree-lexicographic ordering on the free monoid $\asY$
extending $y_1<y_2<  \cdots < y_N$.
The Yang-Baxter algebra
$\cA_Y= \cA(\textbf{k},Y,r_Y)
\simeq \textbf{k}\langle Y ; \;\cR_{Y}
\rangle $
has exactly $\binom{N}{2}$ quadratic relations which can be written
explicitly as
\begin{equation}
  \label{eq:gammaji}
\cR_{Y} = \{ \gamma_{ji} = y_jy_i -y_{i^{\prime}}y_{j^{\prime}}\mid \; 1 \leq i,j\leq N, r_Y(y_j, y_i)= (y_{i^{\prime}},y_{j^{\prime}}), \;\text{where}\; y_jy_i > y_{i^{\prime}}y_{j^{\prime}} \;\text{holds in }\;\asY
\},
\end{equation}
Each relation
corresponds to a non-trivial $r_Y$-orbit.
 The leading monomials satisfy
 $\LM(\gamma_{ji}) = y_jy_i > y_{i^{\prime}}y_{j^{\prime}}$.

\subsection{The Veronese map $v_{n,d}$ and  its kernel}
\label{subsec:grobner_veronese_map}
\begin{lem}
    \label{lem:Ver_well-defined1}
In notation as above. Let $(X,r)$ be a solution of order $n$, $\cA_X=
\cA(\textbf{k},X,r)$, let $d\geq 2$, be an integer, and let
$N=\binom{n+d-1}{d}$. Suppose $(Y, r_Y)$ is the associated $d$-Veronese
solution,  $Y=\{y_1, \cdots, y_N\}$, and $\cA_Y = \cA(\textbf{k},Y,r_Y)$, is the
corresponding Yang-Baxter algebra.

The assignment
    \[y_1 \mapsto w_1, \;  \; y_2 \mapsto w_2,  \; \;   \dots ,  \; \;   y_N
    \mapsto w_N\]
extends to an algebra  homomorphism  $v_{n,d}:\cA_Y  \rightarrow \cA_X.$
The image of the map $v_{n,d}$ is the d-Veronese subalgebra $\cA_X^{(d)}$.
    \end{lem}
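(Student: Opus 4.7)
The plan is to exploit the universal property of $\cA_Y$ as the quotient of the free algebra $\textbf{k}\langle Y\rangle$ by the ideal $(\cR_Y)$. First I would define the algebra homomorphism $\tilde v : \textbf{k}\langle Y\rangle \longrightarrow \cA_X$ on the free algebra by the assignment $y_i \mapsto w_i$, and then verify that $\tilde v$ annihilates every defining relation $\gamma_{ji} \in \cR_Y$. By the universal property, this factors through $\cA_Y = \textbf{k}\langle Y\rangle/(\cR_Y)$ and produces the desired morphism $v_{n,d}$.

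The key computation is exactly where the definition of $(Y,r_Y)$ connects with the structure of the braided monoid $(S,r_S)$. For each $(j,i)\in \cH(n,d)$, by Definition-Notation \ref{def:Yrho} the equality $r_Y(y_j,y_i)=(y_{i^\prime},y_{j^\prime})$ holds \emph{iff} $\rho(w_j,w_i)=(w_{i^\prime},w_{j^\prime})$, and therefore
\[
\tilde v(\gamma_{ji}) \;=\; w_j w_i - w_{i^\prime} w_{j^\prime}.
\]
This is precisely the relation $g_{ji}\in \cR_a$ from Theorem~\ref{thm:d-Veronese_relations}. There we already showed, using the braided-monoid identity M3 in $(S,r_S)$ together with $\rho_d(w_j,w_i)=(\Nor({}^{w_j}w_i),\Nor(w_j^{w_i}))$, that $w_jw_i = w_{i^\prime}w_{j^\prime}$ is an equality in $S$, and hence in $\cA_X$. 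Thus $\tilde v(\gamma_{ji})=0$ for every $(j,i)\in \cH(n,d)$, so $v_{n,d}$ is well defined.

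For the statement about the image, note that $v_{n,d}(\cA_Y)$ is the subalgebra of $\cA_X$ generated by $\{w_1,\dots,w_N\}=\cN_d$. By (\ref{eq:A^d}), the $d$-Veronese $\cA_X^{(d)}$ is generated in degree one by the normal monomials of length $d$, namely by $\cN_d$, so $v_{n,d}(\cA_Y)=\cA_X^{(d)}$.

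I expect no serious obstacle: the non-trivial content has already been absorbed into Theorem~\ref{thm:d-Veronese_relations} (vanishing of $g_{ji}$ in $\cA_X$) and into Proposition~\ref{dVeron_monoid2} and the definition of $(Y,r_Y)$. What remains here is essentially to assemble these facts and invoke the universal property; the only point requiring any care is making the identification of $\tilde v(\gamma_{ji})$ with $g_{ji}$ explicit, which is immediate from how $r_Y$ was defined from $\rho_d$.
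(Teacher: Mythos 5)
Your proposal is correct and follows essentially the same route as the paper's own proof: both reduce the well-definedness of $v_{n,d}$ to the identification $\tilde v(\gamma_{ji})=g_{ji}$ and to the fact, already established in the proof of Theorem \ref{thm:d-Veronese_relations} via the M3 identity in the braided monoid, that $g_{ji}$ vanishes in $\cA_X$; the statement about the image follows in both cases because $\cA_X^{(d)}$ is one-generated by $\cN_d$. Your explicit appeal to the universal property of the quotient $\textbf{k}\langle Y\rangle/(\cR_Y)$ is just a cleaner phrasing of the paper's remark that the map ``agrees with the relations.''
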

    \begin{proof}
    Naturally we set
    $v_{n,d} (y_{i_{1}}\cdots y_{i_{p}}): = w_{i_{1}}\cdots w_{i_{p}}$, for all
    words $y_{i_{1}}\cdots y_{i_{p}}\in \langle Y\rangle$ and then extend this
    map linearly.
    Note that for each polynomial $\gamma_{ji}\in \cR_{Y}$ one has
    \[v_{n,d}( \gamma_{ji}) = f_{ji}\in \cR_{a},\]
    where the set $\cR_{a}$ is a  part of the relations of $\cA_X^{(d)}$ given
    in (\ref{eq:fji}).
    Indeed, let $\gamma_{ji}\in \cR_{Y}$, so
    $\gamma_{ji} = y_jy_i -y_{i^{\prime}}y_{j^{\prime}},$ where
    $(y_{i^{\prime}},y_{j^{\prime}}) = r_Y(y_j, y_i)$, and $y_jy_i >y_{i^{\prime}}y_{j^{\prime}}$,  see  (\ref{eq:rY}).
    Then
    \[
    \begin{array}{ll}
    v_{n,d}( \gamma_{ji}) &= v_{n,d}(y_jy_i -y_{i^{\prime}}y_{j^{\prime}}) \\
                          &= w_jw_i
                          -w_{i^{\prime}}w_{j^{\prime}},\;\text{where}\;
                          (w_{i^{\prime}},w_{j^{\prime}}) = \rho (w_j, w_i), \text{and}\; w_jw_i
                          >w_{i^{\prime}}w_{j^{\prime}}\;
                          \\
                          & = f_{ji}\in \cR_{a}.
    \end{array}
    \]
   We have shown that $f_{ji}$ equals identically $0$ in $\cA_X$, so the map $v_{n,d}$ agrees
   with the relations of the algebra $\cA_X$.
It follows that $v_{n,d}:\cA_Y  \rightarrow \cA_X$
is a well-defined homomorphism of algebras.

The image of $v_{n,d}$ is the subalgebra of $\cA_X$ generated by the normal
monomials $\cN_d$, which by Theorem \ref{thm:d-Veronese_relations} is exactly
the $d$-Veronese subalgebra
$\cA_X^{(d)}$.
    \end{proof}
\begin{dfn}
\label{def:veronesemap}
We call the map $v_{n,d}$ from Lemma \ref{lem:Ver_well-defined1} \textit{the
$(n,d)$-Veronese map}.
\end{dfn}

\begin{thm}
    \label{thm:Veronese_ker}
In assumption and notation as above.
Let $(X,r)$ be a solution of order $n$, with $X=\{x_1, \cdots, x_n\}$,
let $\cA_X= \cA(\textbf{k},X, r)$ be  its Yang-Baxter algebra. Let $d \geq 2$
be an integer, $N=\binom{n+d-1}{d},$ and suppose that $(Y, r_Y)$ is the associated
$d$-Veronese solution of YBE with  corresponding Yang-Baxter algebra  $\cA_Y = \cA(\textbf{k},Y,r_Y)$,
Let $v_{n,d} : \cA_Y \rightarrow  \cA_X$
be the the
$(n,d)$-Veronese map (homomorphism of algebras) extending the assignment
\[y_1
\mapsto w_1, \ y_2 \mapsto w_2, \ \dotsc ,\  y_N  \mapsto w_N.\]
Then the following conditions hold.
\begin{enumerate}
\item
The image of $v_{n,d}$ is the d-Veronese subalgebra $\cA_X^{(d)}$ of  $\cA_X$.
\item
The kernel  $\mathfrak{K} := \ker (v_{n,d})$  of the Veronese map is generated
by the set
of $\binom{N+1}{2}- \binom{n+2d-1}{n-1}$ linearly independent quadratic
binomials:
\begin{equation}
  \label{eq:gammaij}
\cR (\mathfrak{K}) = \{\gamma_{ij} = y_i y_j -y_{i_0}y_{j_0}\mid 1\leq i,j\leq n, \; \text{where }\; g_{ij} = w_i w_j -w_{i_0}w_{j_0}\in \cR_{b} \}
\end{equation}
 In particular, the leading monomial of each $\gamma_{ij}$ satisfies
 \[\LM(\gamma_{ij}) = y_iy_j >  y_{i_0}y_{j_0}.\]
\end{enumerate}
\end{thm}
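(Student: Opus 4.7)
The plan is as follows. Part (1) is already established in Lemma \ref{lem:Ver_well-defined1}, so I would only have to reiterate it. For part (2), the strategy is to compare presentations: $\cA_X^{(d)}$ has the explicit presentation with one-generators $\{w_1,\ldots,w_N\}$ and linearly independent quadratic relations $\cR = \cR_a \cup \cR_b$ found in Theorem \ref{thm:d-Veronese_relations}, while $\cA_Y$ has the presentation $\textbf{k}\langle Y\rangle/(\cR_Y)$ where $\cR_Y$ is given in \eqref{eq:gammaji}.

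First, I would verify directly that each $\gamma_{ij} \in \cR_Y^{\rV}$ lies in $\ker(v_{n,d})$. Indeed, for $(i,j) \in \Mv(n,d)$ one computes
\[
v_{n,d}(\gamma_{ij}) \;=\; w_iw_j - w_{i_0}w_{j_0} \;=\; g_{ij} \;\in\; \cR_b,
\]
and since $w_{i_0}w_{j_0} = \Nor(w_iw_j)$, this element vanishes identically in $\cA_X$. Linear independence of $\cR_Y^{\rV}$ in $\textbf{k}\langle Y\rangle$ is immediate because the leading monomials $y_iy_j$, $(i,j)\in\Mv(n,d)$, are pairwise distinct (each such pair singles out one ordered monomial of length $2d$), and the cardinality $|\cR_Y^{\rV}| = |\Mv(n,d)| = \binom{N+1}{2} - \binom{n+2d-1}{n-1}$ is recorded in Lemma \ref{lem:orders}(4). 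The inequality $\LM(\gamma_{ij}) = y_iy_j > y_{i_0}y_{j_0}$ is inherited from the corresponding inequality for the $g_{ij}$ under the bijection $y_i \leftrightarrow w_i$.

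For the reverse inclusion $\ker(v_{n,d}) \subseteq (\cR_Y^{\rV})$, I would argue via presentations. Let $\mathfrak{J} \subseteq \cA_Y$ be the two-sided ideal generated by $\cR_Y^{\rV}$; by the previous step $\mathfrak{J} \subseteq \ker(v_{n,d})$, so $v_{n,d}$ factors through a surjective graded algebra homomorphism
\[
\bar v_{n,d} \colon \cA_Y/\mathfrak{J} \;\twoheadrightarrow\; \cA_X^{(d)}.
\]
Now $\cA_Y/\mathfrak{J}$ has the presentation $\textbf{k}\langle Y\rangle / (\cR_Y \cup \cR_Y^{\rV})$, and under the relabeling $y_i \leftrightarrow w_i$ the formulas \eqref{eq:rY}, \eqref{eq:gammaji}, \eqref{eq:gammaij} show that $\cR_Y$ corresponds term-by-term to $\cR_a$ and $\cR_Y^{\rV}$ corresponds term-by-term to $\cR_b$. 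Thus the combined relation set $\cR_Y \cup \cR_Y^{\rV}$ matches the defining set $\cR = \cR_a \cup \cR_b$ of $\cA_X^{(d)}$ supplied by Theorem \ref{thm:d-Veronese_relations}. Consequently $\bar v_{n,d}$ is an isomorphism of quadratic algebras, and therefore $\ker(v_{n,d}) = \mathfrak{J} = (\cR_Y^{\rV})$.

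The main obstacle is bookkeeping: one must carefully verify that the bijection $y_i \leftrightarrow w_i$ transports $\cR_Y$ onto $\cR_a$ and $\cR_Y^{\rV}$ onto $\cR_b$ in a way that respects the splitting $\mathrm{P}(n,d)=\mathrm{C}(n,d)\sqcup \Mv(n,d)$ and the one-to-one correspondence between nontrivial $\rho_d$-orbits and nontrivial $r_Y$-orbits coming from Definition-Notation \ref{def:Yrho}. Once this matching is established, the isomorphism $\bar v_{n,d}$ is automatic from the uniqueness of presentations, and no further dimension counting is needed beyond what has already been done in Theorem \ref{thm:d-Veronese_relations} and Lemma \ref{lem:orders}.
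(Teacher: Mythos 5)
Your proposal is correct, and the forward half (showing $\cR_Y^{\rV}\subseteq\mathfrak{K}$ via $v_{n,d}(\gamma_{ij})=g_{ij}\in\cR_b$, counting $|\cR_Y^{\rV}|=|\Mv(n,d)|$, and checking linear independence) coincides with the paper's. Where you genuinely diverge is the reverse inclusion. The paper computes $\dim(\cA_Y)_2=\binom{N+1}{2}$, applies the first isomorphism theorem in degree $2$ to get $\dim\mathfrak{K}_2=\binom{N+1}{2}-\binom{n+2d-1}{n-1}$, concludes that $\cR_Y^{\rV}$ is a basis of $\mathfrak{K}_2$, and then invokes the fact that $\mathfrak{K}$ is generated in degree $2$ to finish. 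You instead pass to $\cA_Y/(\cR_Y^{\rV})\cong \textbf{k}\langle Y\rangle/(\cR_Y\cup\cR_Y^{\rV})$ and observe that under $y_i\leftrightarrow w_i$ this presentation is literally the presentation $\textbf{k}\langle w_1,\dots,w_N\rangle/(\cR_a\cup\cR_b)$ of $\cA_X^{(d)}$ supplied by Theorem \ref{thm:d-Veronese_relations}, so that $\bar v_{n,d}$ is an isomorphism and $\mathfrak{K}=(\cR_Y^{\rV})$. Both arguments ultimately rest on the same inputs (the quadraticity of $\cA^{(d)}$ from Corollary \ref{cor:dVeron} and the completeness of the relation set $\cR$ from Theorem \ref{thm:d-Veronese_relations}), but your route has the advantage of making the degree-$2$ generation of $\mathfrak{K}$ automatic rather than an asserted step, and of avoiding a second dimension count; the cost is that you lean on the full strength of Theorem \ref{thm:d-Veronese_relations} (that $\cR$ is a \emph{complete} defining set, not merely a linearly independent set of valid relations), so the "bookkeeping" you flag — that $\cR_Y\mapsto\cR_a$ and $\cR_Y^{\rV}\mapsto\cR_b$ term-by-term, which is immediate from (\ref{eq:rY}), (\ref{eq:gammaji}), (\ref{eq:gammaij}) — is indeed all that remains to check. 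One tiny slip: the leading monomials $y_iy_j$ live in $Y^2$, not $X^{2d}$, but the triangularity argument for independence is unaffected since $\Mv(n,d)$ and $\mathrm{C}(n,d)$ are disjoint.
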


\begin{proof}
Part (1) follows from Lemma \ref{lem:Ver_well-defined1}.

Part (2). We have to verify that the set $\cR (\mathfrak{K})$ generates $\mathfrak{K}$.

By direct computation one shows that  for every $\gamma_{ij} \in \cR (\mathfrak{K})$ one has
\[v_{n,d} (\gamma_{ij})(y_1, \cdots, y_N)= g_{ij}(w_1, \cdots w_n) \in \cR_b ,\]
in fact
$v_{n,d}$ induces a 1-to-1 map $\cR (\mathfrak{K}) \longrightarrow \cR_b$. It follows that
\begin{equation}
\label{eq: card_R(K)}
|\cR (\mathfrak{K})|= |\cR_b|= \binom{N+1}{2} -\binom{n+2d-1}{n-1}.
\end{equation}
Moreover, $v_{n,d}(\cR (\mathfrak{K})) = \cR_b,$ the set
of relations of the d-Veronese $\cA_X^{(d)}$ given in (\ref{eq:gij}),
so $\cR (\mathfrak{K})\subset \mathfrak{K}$.

The Yang-Baxter algebra $ \cA_Y$ is a quadratic algebra with $N$ generators and
$\binom{N}{2}$ defining quadratic relations which are linearly independent, so
\[\dim (\cA_Y)_2=N^2-\binom{N}{2}=\binom{N+1}{2}.\]
By the First Isomorphism Theorem  $(\cA_Y/\mathfrak{K})_2\cong (\cA_X^{(d)})_2
=(\cA_X)_{2d}$, hence
\[\dim (\cA_Y)_2 = \dim (\mathfrak{K})_2 + \dim (\cA_X)_{2d}.\]
We know that  $\dim (\cA_X)_{2d}=|\cN_{2d}|= \binom{n+2d-1}{n-1}$, hence
\[
\binom{N+1}{2}= \dim (\mathfrak{K})_2 + \binom{n+2d-1}{n-1}.
\]
This together with (\ref{eq: card_R(K)}) implies that
\[\dim (\mathfrak{K})_2 = \binom{N+1}{2} - \binom{n+2d-1}{n-1} = |\cR (\mathfrak{K})|.
\]

The set $\cR (\mathfrak{K})$ is linearly independent, since  $v_{n,d}(\cR (\mathfrak{K})) =
\cR_b,$  and by Lemma \ref{lem:independence} the set $\cR_b$ is linearly
independent. This together with the equality $|\cR (\mathfrak{K})|=\dim (\mathfrak{K})_2$ implies that the set $\cR (\mathfrak{K})$
is a basis of the graded component $\mathfrak{K}_2$,
so $\mathfrak{K}_2 = \textbf{k} \cR (\mathfrak{K})$. But the ideal $\mathfrak{K}$ is
generated by homogeneous polynomials of degree $2$, and
therefore
\begin{equation}
\label{eq:K-generators}
\mathfrak{K} = (\mathfrak{K}_2)= (\cR (\mathfrak{K})).
\end{equation}
We have proven that $\cR (\mathfrak{K})$ is a minimal set of generators for the kernel
$\mathfrak{K}$.
\end{proof}

\begin{cor}
\label{cor:dVeron2}
Let $(X,r)$ be a solution of order $n$, and let $\cA = \cA(\textbf{k}, X,r)$ be
its Yang-Baxter algebra, let $d \geq 2$ be an integer.
Then the $d$-Veronese subalgebra $\cA^{(d)}$ is a left and a right Noetherian domain.
\end{cor}
\begin{proof}
The $d$-Veronese $\cA^{(d)}$  is a subalgebra of  $\cA$ which is a domain,
see Remark \ref{fact1}  and therefore $\cA^{(d)}$ is a domain.
By Theorem  \ref{thm:Veronese_ker}  $A^{(d)}$ is a homomorphic image of
the Yang-Baxter algebra $\cA_Y= \cA(\textbf{k}, Y, r_Y)$, where  $(Y, r_Y)$ is the $d$-Veronese solution
associated with $(X,r)$. The algebra $\cA_Y$ is Noetherian, since
 $(Y, r_Y)$ is a finite solution of YBE, see Remark \ref{fact1}, so $\cA^{(d)}$ is a left and a right Noetherian domain.
\end{proof}
\section{Special cases}
\label{sec:square-free}
\subsection{Veronese subalgebras of the Yang-Baxter algebra of a square-free
solution}
\label{subsec:square-free}
In this subsection $(X,r)$ is a finite square-free solution of YBE of order $n$, $d
\geq 2$ is an integer. We keep the conventions and notation from the previous
sections.
We apply  Remark \ref{factAS} and fix an appropriare enumeration $X= \{x_1, \cdots, x_n\}$, such that the
the Yang-Baxter algebra $\cA = \cA(\textbf{k}, X,r)$ is a
binomial skew polynomial ring.
More precisely, $\cA$ is a PBW algebra
 $\cA=\textbf{k} \langle x_1, \cdots , x_n\rangle/(\Re_{\cA})$,
where
\begin{equation}
\label{eq:skewpol1}
\Re_{\cA}=\{\varphi_{ji} = x_{j}x_{i} -
x_{i^\prime}x_{j^\prime} \mid 1\leq i<j\leq n\},
\end{equation}
is such that for every pair $i, j, \; 1\leq
i<j\leq n$, the relation $\varphi_{ji}= x_{j}x_{i} - x_{i'}x_{j'}\in \Re_{\cA},$
satisfies
$j> i^{\prime}$, $i^{\prime} < j^{\prime}$ and every term $x_ix_j, 1\leq i <j \leq
n,$ occurs in some relation in $\Re_{\cA}$.
In particular
\begin{equation}
        \label{eq:LM}
\LM (\varphi_{ji}) = x_jx_i, \,    1 \leq i < j \leq n.
\end{equation}

The set $\Re_{\cA}$ is a quadratic Gr\"{o}bner basis of the ideal $I = (\Re_{\cA})$
w.r.t the degree-lexicographic ordering $<$ on $\asX$.
It follows from the shape of the elements of the Gr\"{o}bner basis $\Re_{\cA}$,  and
(\ref{eq:LM})
that the set $\cN=\cN(I)$ of normal monomials modulo $I =(\Re_{\cA})$ coincides with
the set $\cT$ of ordered monomials (terms) in $X$,
\begin{equation}
\label{eq:skewpol12}
\cN=\cT =\cT(X) =\left\lbrace x_1^{\alpha_1}\cdots x_n^{\alpha_n}\in\asX \
\vert \ \alpha_i\in\N_0, i\in\{0,\dots,n\}\right\rbrace.
\end{equation}
All definitions, notation, a=nd results from Sections \ref{sec:dVeronese} and
\ref{sec:Veronesemap} are valid but they can be rephrased in more explicit terms
replacing the abstract sets $\cN=\cN(I)$, $\cN_d$, and $\cN_{2d}$, respectively  with the explicit
set of ordered monomials $\cT=\cT(X)$, $\cT_d$, and $\cT_{2d}$.
In this case we consider the space $\textbf{k} \cT$
endowed with multiplication defined by
 $f \bullet g := \Nor_{\Re_{\cA}}(fg), \quad \text{for every}\; f,g \in \textbf{k}
 \cT.$
Then there is an isomorphism of graded algebras
\begin{equation}
\label{eq:Aas_vectspace1}
\cA=\cA(\textbf{k}, X_n, r) \cong (\textbf{k} \cT, \bullet ),
\end{equation}
and we identify the PBW algebra  $\cA$ with $(\textbf{k}\cT,   \bullet ).$
Similarly, the Yang-Baxter monoid
$S(X,r)$ is identified with $(\cT, \bullet)$.

We order the elements of $\cT_d$ lexicographically, so
        \begin{equation}
        \label{eq:deg-d-monomials1}
\cT_d = \{ w_1 =(x_1)^d < w_2= (x_1)^{d-1}x_2 < \dots < w_N= (x_n)^d \},
\;\text{where}\; N=\binom{n+d-1}{d}.
        \end{equation}
        The normalised $d$-Veronese solution, see Definition \ref{def:normalizedSol}, is denoted by $(\cT_d, \rho)=  (\cT_d, \rho_d)$.
The $d$-Veronese $\cA^{(d)}$ is a quadratic algebra
(one)-generated by $w_1, w_2, \dots , w_N.$

 It follows from \cite{PoPo}, Proposition 4.3, Ch 4,  that if $x_1, \cdots, x_n$ is a set of PBW generators of a quadratic algebra $A$, then
the elements of the PBW-basis of degree $d$ , taken in lexicographical order are PBW-generators of the Veronese subalgebra $A^{(d)}$.

\begin{cor}
In notation as above, the $d$-Veronese $\cA^{(d)}$ is a quadratic PBW algebra
with PBW generators the terms $w_1, w_2, \dots , w_N$  ordered lexicographically, see (\ref{eq:deg-d-monomials1}).
\end{cor}

For the class of finite square-free solutions $(X,r)$  Theorem \ref{thm:d-Veronese_relations} and especially the description of the set $\cR_b$
becomes more precise.
\begin{rmk}
\label{rmk:squarefree1}
If $w_i =x_{i_1}\cdots x_{i_d}, w_j =x_{j_1}\cdots
x_{j_d}\in \cT_{d}$, the product $w_iw_j$,   is
the leading monomial of an element $g_{ij}\in \cR_b$ if and only if $i_d > j_1$ and $\cdot\rho(w_i, w_j)\geq w_iw_j$.
\end{rmk}
 \begin{cor}
\label{cor:d-Veronese_relations_square-free}
Let $(X,r)$ be a finite
    square-free solution of order $n$,  let $X= \{x_1, \cdots, x_n\}$, be
    enumerated so that the algebra $\cA= \cA(\textbf{k}, X_n, r)$ is a binomial
    skew polynomial ring, let $d\geq 2$ be an integer, and $N=\binom{n+d-1}{d}$.
    Let  $(\cT_d, \rho)$ be the normalised $d$-Veronese solution.

The $d$-Veronese subalgebra $\cA^{(d)} \subseteq \cA$
is a quadratic PBW algebra
\[A^{(d)} \cong  \textbf{k} \langle w_1, \cdots, w_N \rangle/ (\cR),\]
with  PBW generators $\cT_{d}=\{w_1, \cdots, w_N\}$, and $N^2 -\binom{n+2d-1}{n-1}$ linearly independent quadratic relations $\cR$.
The relations $\cR$ split into two disjoint subsets $\cR= \cR_{a}
 \bigcup\cR_{b}$, described below.
 \begin{enumerate}
 \item
 \label{thm:d-Veronese_relations_Ra_SqFree}
The set $\cR_{a}$ contains $\binom{N}{2}$  relations corresponding
to the non-trivial $\rho$-orbits in $\cT_{d}\times \cT_{d}$:
 \begin{equation}
  \label{eq:fji_Sqfree}
\begin{array}{ll}
\cR_{a}=  \{f_{ji}= &w_jw_i - w_{i^{\prime}}w_{j^{\prime}}  \mid 1 \leq i,j \leq n,\; \text{where } \\
                &\rho(w_j, w_i)=(w_{i^{\prime}},w_{j^{\prime}}), \; \text{and }\;
                      w_j > w_{i^{\prime}} \; \text{holds in}\;\asX\}.
\end{array}
\end{equation}
Each monomial $w_iw_j$, such that $(w_i, w_j)$ is in a nontrivial
$\rho$-orbit occurs exactly once in  $\cR_{a}$. Every relation  $f_{ji}$ has leading monomial $\LM(f_{ji}) =
    w_jw_i$.
    \item
 \label{thm:d-Veronese_relations_Rb_SqFree}
The set $\cR_{b}$ contains $\binom{N+1}{2} -\binom{n+2d-1}{n-1}$
relations
 \begin{equation}
  \label{eq:gij_sqfree}
  \begin{array}{ll}
\cR_{b} =\{g_{ij} &= w_i w_j -w_{i_0}w_{j_0}\mid 1 \leq i,j \leq n,\; \text{where }\; w_i =x_{i_1}\cdots x_{i_d}, w_j =x_{j_1}\cdots
x_{j_d}\in \cT_{d}, \; i_d >j_1 \text{and}\\ & \cdot\rho(w_i, w_j)\geq w_iw_j, \;  w_{i_0}, w_{j_0}\in \cT_{d}\; \text{are such that}\;
 \Nor(w_iw_j) = w_{i_0}w_{j_0} \in \cT_{2d}\}.
\end{array}
\end{equation}
 In particular,
 $\LM(g_{ij}) = w_iw_j >  w_{i_0}w_{j_0}.$
\end{enumerate}
The relations $\cR$ form a Gr\"{o}bner basis of the ideal $(\cR)$ of the free associative algebra $\textbf{k}\langle w_1, \cdots, w_n\rangle$.
    \end{cor}

For a square-free solution $(X,r)$ and $(\cT_d, \rho_d)$ as above,
the $d$-Veronese solution $(Y, r_Y)$,  associated to  $(X,r)$,
is defined in Definition-Notation \ref{def:Yrho}. One has  $Y = \{y_1, y_2,
\cdots, y_N\}$,
and the map $r_Y:Y\times Y\longrightarrow Y\times Y$ is determined by
\begin{equation}
  \label{eq:rYsqfree}
r_Y (y_j, y_i) :=(y_{i^{\prime}},y_{j^{\prime}})\quad\text{\emph{iff}}\quad \rho
(w_j, w_i)= (w_{i^{\prime}},w_{j^{\prime}}), \; 1 \leq i,j,i^{\prime},
j^{\prime}\leq n.
\end{equation}
By definition $(Y, r_Y)$ is isomorphic to the solution $(\cT_d, \rho_d)$ .
Its Yang-Baxter algebra $\cA_Y = \cA(\textbf{k},Y,r_Y)$ is needed to define the $(n,d)$
Veronese homomorphism
$v_{n,d} : \cA_Y \rightarrow  \cA_X$
extending the assignment \[y_1 \mapsto w_1, \ y_2 \mapsto w_2, \ \dotsc ,\  y_N
\mapsto w_N.\]
Theorem \ref{thm:Veronese_ker} shows that the image of $v_{n,d}$ is the
$d$-Veronese subalgebra $\cA^{(d)}$ and determines a minimal set of generators
of its kernel.

The finite square-free solutions $(X,r)$ form an important subclass of the class
of all finite solutions, see for example \cite{GI12}. Moreover Theorem \ref{thm:PBWmain} shows that the Yang-Baxter algebra $\cA(\textbf{k},  X, r)$ of a finite solution $(X,r)$ is a PBW algebra if and only if $(X,r)$ is square-free.
So it is natural to ask \emph{"can we define analogue of
Veronese morphisms between Yang-Baxter algebras of
square-free solutions?"}
We shall prove that it is not
possible to restrict the defininition of Veronese maps introduced for
Yang-Baxter algebras of finite solutions to the subclass of Yang-Baxter algebras
of finte square-free solutions. Indeed,  if we assume that $(X,r)$ is square-free
then the algebra $A_Y$ involved in the definition of the map $v_{n,d}$ is
associated with the $d$-Veronese solution $(Y, r_Y)$,  which, in general is not
square-free, see Corollary  \ref{cor:Y_square-free}.

To prove the following result we work with the monomial $d$-Veronese solution $(S_d, r_d)$  keeping
in mind that it has special "hidden" properties induced by the braided monoid
$(S, r_S)$.
\begin{thm}
    \label{thm:Sd_square-free}
 Let $d\geq 2$ be an integer. Suppose $(X,r)$ is a finite square-free solution
 of order $n \geq 2$, $(S, r_S)$  is the associated braided monoid, and
$(S_d, r_d)$  is the monomial $d$-Veronese solution induced by $(S, r_S)$, see Def. \ref{def:VeroneseSol}.
Then $(S_d, r_d)$  is a square-free solution if and only if $(X,r)$ is a trivial
solution.
\end{thm}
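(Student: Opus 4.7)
The plan is to prove the two implications separately.

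\emph{The ``if'' direction.} By Fact~\ref{theoremA}(2)(d), $(X, r)$ is trivial if and only if the braided monoid $(S, r_S)$ is square-free. Since $(S_d, r_d)$ is the restriction of $(S, r_S)$ to $S_d \times S_d$ (Corollary~\ref{dVeron_monoid}), the square-freeness of $(S, r_S)$ passes to $(S_d, r_d)$ for free, giving $r_d(u, u) = (u, u)$ for every $u \in S_d$.

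\emph{The ``only if'' direction.} Assume $(X, r)$ is square-free and not trivial. I will produce a word $u \in X^d$ whose class in $S_d$ satisfies $r_d(u, u) \neq (u, u)$. By non-triviality, pick $x, y \in X$ with $r(x, y) = (z, t) \neq (y, x)$. Square-freeness forces $x \neq y$ (else $r(x, x) = (x, x) = (y, x)$), and combined with nondegeneracy it forces $z := {}^x y$ to be distinct from both $y$ and $x$: if $z = x$, injectivity of ${}^x$ together with ${}^x x = x$ would give $y = x$; if $z = y$, applying involution to $r(x, y) = (y, x^y)$ and using the square-free identity $b^b = b$ together with injectivity of $(\,\cdot\,)^y$ forces $x = y$. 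Hence $x, y, z$ are pairwise distinct, and in particular the $r$-orbits $\{xy, zt\}$ and $\{yx, (xy)^{\,\text{flip}}\}$ in $X^2$ are disjoint.

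The proposed witness is the class in $S_d$ of the word $u := (xy)\, x^{d-2} \in X^d$. I compute ${}^u u$ by iterating the formulas (\ref{eq:effective_actions}). The trailing block $x^{d-2}$ collapses under the square-free identities ${}^x x = x$ and $x^x = x$, while the leading factor $xy$ feeds the nontrivial $r(x, y) = (z, t) \neq (y, x)$ into the action and produces a word whose leading two letters essentially differ from $xy$. For $d = 2$ this reduces to checking ${}^{xy}(xy) \neq xy$ in $S_2$; this is illustrated by Example~\ref{example1} with $x = x_2,\ y = x_4$, where a direct computation gives ${}^{x_2 x_4}(x_2 x_4) = x_1 x_3$, and the two $r$-orbits $\{x_2 x_4,\ x_3 x_1\}$ and $\{x_1 x_3,\ x_4 x_2\}$ are disjoint, certifying $x_1 x_3 \neq x_2 x_4$ in $S_2$.

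To conclude, I compare ${}^u u$ and $u$ via the orbit description in Remark~\ref{orbitsinG}: two words in $X^d$ represent equal elements of $S$ if and only if they lie in the same $\mathcal{D}_d(r)$-orbit. The leading-pair perturbation of ${}^u u$ cannot be undone by any sequence of $r$-moves inside the $x$-tail, since square-freeness keeps that tail inert (no nontrivial $r$-replacement is available on $x^{d-2}$), and the leading pair already lies in an $r$-orbit distinct from that of $xy$. The main obstacle is the bookkeeping for $d \geq 3$: the full unfolding of (\ref{eq:effective_actions}) through the tail is intricate, and the cleanest treatment I anticipate is induction on $d$, using the $d = 2$ base case together with the M3 identity $ab = ({}^a b)(a^b)$ and the square-free simplifications to propagate the perturbation through one extra letter at a time.
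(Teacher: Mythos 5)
Your ``if'' direction is fine and is essentially the paper's (implicit) argument: triviality of $(X,r)$ gives square-freeness of $(S,r_S)$ by Fact~\ref{theoremA}(2), and square-freeness restricts to $(S_d,r_d)$. The ``only if'' direction, however, has a genuine gap: the key claim ${}^u u \neq u$ in $S$ for your witness $u = xy\,x^{d-2}$ is never established. For $d=2$ you verify it only in Example~\ref{example1}, which is one particular solution and cannot substitute for a proof that \emph{every} non-trivial square-free solution admits such a pair; for $d\geq 3$ you explicitly defer the argument (``the cleanest treatment I anticipate is induction on $d$''). Moreover, the mechanism you invoke to compare ${}^u u$ with $u$ is unsound: the assertion that ``square-freeness keeps the tail inert'' is false, because the $\Dcal_d(r)$-orbit of $xy\,x^{d-2}$ is generated by $r$-moves at \emph{all} positions, and once a move is applied at the junction between $y$ and the tail, the tail is no longer $x^{d-2}$ and subsequent moves there are nontrivial. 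The orbit is therefore not just a ``leading-pair perturbation'' of $xy\,x^{d-2}$, and disjointness of the $r$-orbits of $xy$ and $({}^xy)(x^y)$ in $X^2$ says nothing by itself about $\Dcal_d(r)$-orbits in $X^d$. (Your auxiliary claim $z={}^xy\neq y$ is also not justified as written: it amounts to the implication ${}^xy=y\Rightarrow x^y=x$, i.e.\ condition \textbf{lri}, which for square-free solutions is a theorem in its own right rather than an immediate consequence of involutivity and nondegeneracy.)

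The paper sidesteps all of this by placing the distinguished letter at the \emph{end}: it takes $a = x^{d-1}y$ with $x\neq y$. Then the orbit $\Ocal_{\Dcal_d}(a)$ has a controllable shape --- every word in it begins either with $x$ or with ${}^{x^{d-1}}y$ --- and the first letter of ${}^a a$, computed via ML2 and ML1 as ${}^{(x^{d-1}y)}x = {}^{x^{d-1}}({}^yx)$, is compared with these two possibilities. If it equals ${}^{x^{d-1}}y$, nondegeneracy gives ${}^yx=y$ and then square-freeness forces $x=y$, a contradiction; so it equals $x={}^{x^{d-1}}x$, whence ${}^yx=x$, and since $x\neq y$ were arbitrary this yields triviality. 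If you wish to keep your contrapositive formulation, you must carry out an analogous orbit analysis for your word, which is strictly harder than for $x^{d-1}y$; the cleaner route is to adopt the paper's witness and case split.
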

\begin{proof}
Assume that $(S_d, r_d)$  is a square-free solution. We shall prove that $(X,r)$ is a
trivial solution.

Observe that if $(Z, r_Z)$ is a solution, then (i) $(Z, r_Z)$ is square-free if and
only if
\[{}^zz = z, \; \text{for all}\; z \in Z \]
and (ii) $(Z, r_Z)$ is the trivial solution
if and only if
\[
{}^yx = x, \; \text{for all}\; x,y \in Z.
 \]

Let $x,y \in X, x  \neq y$ and consider the monomial $a= x^{d-1}y\in S_d$. Our
assumption that $(S_d, r_d)$ is square-free implies that ${}^aa=a$ holds in
$S_d$, and therefore in $S$. Now Remark \ref{orbitsinG} implies
 the words $a$ and ${}^aa$ (considered as elements of $X^d$)  belong to the
 orbit $\Ocal=\Ocal_{\Dcal_m}(a)$ of $a=x^{d-1}y$ in  $X^d$.
We analyze  the orbit $\Ocal=\Ocal(x^{d-1}y)$ to find that it contains two type
of elements:
\begin{equation}\label{eq:u}
  u= ({}^{x^{d-1}}y)b, \; \text{where $b= {x^{d-1}}^y\in X^{d-1}$};
\end{equation}
and
\begin{equation}
\label{eq:u}
v = x^ic, \; \text{where $1 \leq i \leq d-1$ and $c\in  X^{d-i}$}.
\end{equation}
A reader who is familiar with the techniques and properties of square-free
solutions such as "cyclic conditions" and condition "lri"
may compute that $b ={(x^{d-1})}^y= (x^y)^{d-1}$ and $c = ({}^{x^{d-i-1}}y)(x^y)^{d-i-1}$, but
these details are not used in our proof.
We use condition ML2, see (\ref{eq:braided_monoid})
to yield the following equality in $S$:
\begin{equation}\label{eq:aa}
{}^aa={}^{(x^{d-1}y)}{(x^{d-1}y)}=({}^{x^{d-1}y}{x})({}^{(x^{d-1}y)^x}{x})\cdots
({}^{(x^{d-1}y)^{x^{d-1}}}{y})=\omega
\end{equation}
The assumption ${}^aa= a$ implies that word $\omega$, considered as an element of $X^d$ is in the orbit $\Ocal$ of $a$,
and therefore two cases are possible.

Case 1. The following is an equality of words in $X^d$:
\[
\omega=({}^{x^{d-1}y}{x})({}^{(x^{d-1}y)^x}{x})\cdots
({}^{(x^{d-1}y)^{x^{d-1}}}{y})=
({}^{x^{d-1}}y)b, \; \; b \in X^{d-1}.\]
Then there is an equality of elements of $X$:
\begin{equation}\label{eq:aa2}
{}^{(x^{d-1}y)}{x}={}^{x^{d-1}}y.
\end{equation}

Now we use condition ML1, see (\ref{eq:braided_monoid}) to obtain
\[
{}^{(x^{d-1}y)}{x}={}^{(x^{d-1})}{({}^yx)}
\]
which together with  (\ref{eq:aa2}) gives
\begin{equation}\label{eq:aa3}
{}^{(x^{d-1})}{({}^yx)}={}^{(x^{d-1})}y.
\end{equation}
The nondegeneracy implies that ${}^yx=y$. At the same time ${}^yy =y$, since
 $(X,r)$ is square-free, and using the nondegeneracy again one gets $x=y$, a
 contradiction.
It follows that Case 1 is impossible, whenever $x \neq y$.

Case 2. The following is an equality of words in $X^d:$
\[
\omega= ({}^{x^{d-1}y}{x})({}^{(x^{d-1}y)^x}{x})\cdots
({}^{(x^{d-1}y)^{x^{k-1}}}{y})=x^ic, \;\; \text{where} \;\; 1 \leq i\leq d-1,   c \in
X^{d-i}.
 \]
Then
\begin{equation}\label{eq:aa3}
{}^{(x^{d-1}y)}{x}=x.
\end{equation}
At the same time the equality ${}^xx=x$ and condition ML1 imply
${}^{x^{d-1}}x=x$,
which together with
(\ref {eq:aa3}) and ML1 (again) gives
\[
{}^{x^{d-1}}x = {}^{(x^{d-1}y)}{x}={}^{x^{d-1}}{({}^yx)}.
 \]
Thus, by the nondegeneracy ${}^yx = x$.
We have shown that ${}^yx = x$, for all   $x, y \in X, y \neq x$.
But $(X,r)$ is square-free, so ${}^yy = y$ for all  $y \in X$.
It follows that ${}^yx = x$ holds for all $x, y \in X$ and therefore $(X,r)$ is
the trivial solution.
\end{proof}

By construction the (abstract) $d$-Veronese solution $(Y,r_Y)$ associated to $(X,r)$ is isomorphic to the normalised solution $d$- Veroneze solution $(\cT_d, \rho_d)$
and therefore it is isomorphic to the monomial $d$-Veronese solution $(S_d,r_d)$. Theorem  \ref{thm:Sd_square-free} implies straightforwardly the following corollary.
\begin{cor}
    \label{cor:Y_square-free}
 Let $d\geq 2$ be an integer,
suppose $(X,r)$ is a square-free solution of finite order. Then the $d$-Veronese
solution $(Y, r_Y)$ is square-free if and only if $(X,r)$ is a trivial solution.
\end{cor}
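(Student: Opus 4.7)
The plan is to reduce the corollary directly to Theorem~\ref{thm:Sd_square-free} by tracing through the chain of isomorphisms of solutions built up in the paper. Square-freeness is clearly an invariant of the isomorphism class of a quadratic set $(Z, r_Z)$: if $\varphi : Z_1 \to Z_2$ is a bijection with $(\varphi \times \varphi) \circ r_1 = r_2 \circ (\varphi \times \varphi)$, then $r_1(z,z) = (z,z)$ for all $z \in Z_1$ if and only if $r_2(\varphi(z), \varphi(z)) = (\varphi(z), \varphi(z))$ for all $z \in Z_1$.

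The chain of identifications I would assemble is as follows. First, by Proposition~\ref{dVeron_monoid2}(2), the map $\Nor : S_d \to \cN_d$ is an isomorphism of solutions between the monomial $d$-Veronese solution $(S_d, r_d)$ and the normalized $d$-Veronese solution $(\cN_d, \rho_d)$. Second, by Definition-Notation~\ref{def:Yrho}, the abstract $d$-Veronese solution $(Y, r_Y)$ was constructed precisely so that the bijection $y_i \mapsto w_i$ intertwines $r_Y$ with $\rho_d$, giving an isomorphism $(Y, r_Y) \cong (\cN_d, \rho_d)$. Composing, $(Y, r_Y) \cong (S_d, r_d)$ as solutions.

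Therefore $(Y, r_Y)$ is square-free if and only if $(S_d, r_d)$ is square-free. Applying Theorem~\ref{thm:Sd_square-free} to the hypothesis that $(X,r)$ is a finite square-free solution, the latter condition is equivalent to $(X,r)$ being the trivial solution. This yields the claimed equivalence.

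There is essentially no obstacle here: all the hard work was already done in Theorem~\ref{thm:Sd_square-free}, whose proof exploits the braided monoid structure of $(S, r_S)$ together with conditions \textbf{ML1}, \textbf{ML2}, the nondegeneracy of $(X,r)$, and its square-freeness. The only thing to verify in writing up the corollary is that the isomorphism of solutions transports the fixed-point property along the diagonal, which is immediate from the equivariance of the bijections. Hence the proof reduces to a one-line citation of Theorem~\ref{thm:Sd_square-free} together with the observation $(Y, r_Y) \cong (S_d, r_d)$.
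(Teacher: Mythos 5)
Your proposal is correct and matches the paper's own argument: the paper likewise observes that $(Y,r_Y)\cong(\cN_d,\rho_d)\cong(S_d,r_d)$ as solutions and then invokes Theorem~\ref{thm:Sd_square-free}. Your explicit remark that square-freeness is an invariant of isomorphisms of quadratic sets is the only detail the paper leaves implicit.
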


\begin{rmk}
It follows from Corollary \ref{cor:Y_square-free} that the notion of Veronese morphisms introduced for the class of
Yang-Baxter algebras of finite solutions of YBE
can not be restricted to the subclass of Yang-Baxter algebras associated to finite square-free
solutions.
\end{rmk}

\subsection{Involutive permutation solutions}
\label{subsec:permutationsol}
Recall that a symmetric set $(X,r)$ is \emph{an involutive permutation solution} of Lyubashenko (or shortly \emph{a permutation solution}) if there exists a permutation $f \in \Sym(X)$, such that $r(x,y) =(f(y),f^{-1}(x))$. In this case we shall write $(X, f, r)$, see \cite{D}, and \cite{GI18}, p. 691.

\begin{pro}
\label{pro:permutationsolution}
Suppose $(X,f,r)$ is an involutive permutation solution of finite order $n$ defined as $r(x,y) =(f(y),f^{-1}(x))$, where $f$ is a permutation of $X$ and let $\cA$ be the associated Yang-Baxter algebra.
\begin{enumerate}
\item For every integer $d\geq 2$ the monomial $d$-Veronese solution $(S_d, r_d)$ is an involutive permutation solution.

\item If the permutation $f$ has order $m$ then for every integer $d$ divisible by $m$ the $d$-Veronese subalgebra $\cA^{(d)}$ of $\cA$ is a quotient of the commutative polynomial ring $\textbf{k}[y_1, y_2, \cdots, y_N ]$, where $N=\binom{n+d-1}{d}.$
    \end{enumerate}
\end{pro}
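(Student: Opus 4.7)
The plan is to exploit the explicit form of $r$ to describe the actions of the braided monoid $(S, r_S)$, read off the map $r_d$ from Fact \ref{theoremA}, and then specialize to the case $m \mid d$.

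First, I would determine the extended actions on $S$. Starting from ${}^x y = f(y)$ and $x^y = f^{-1}(x)$ for $x, y \in X$, I would apply the extension formulas (\ref{eq:effective_actions}) together with the compatibility axioms (\ref{eq:braided_monoid}) and prove by induction on word length that for $a = a_1 \cdots a_p$ and $b = b_1 \cdots b_q$ in $S$,
\[{}^a b \;=\; f^{|a|}(b_1)\, f^{|a|}(b_2)\cdots f^{|a|}(b_q), \qquad a^b \;=\; f^{-|b|}(a_1)\, f^{-|b|}(a_2)\cdots f^{-|b|}(a_p),\]
with $f$ applied coordinate-wise to letters. The base cases are direct from the definition of $r$; the inductive steps chain together \textbf{ML1}--\textbf{ML2} for the left action and \textbf{MR1}--\textbf{MR2} for the right action.

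For part (1), restricting to $a, b \in S_d$ these formulas show that ${}^a b$ depends only on $b$ while $a^b$ depends only on $a$. Hence I may define $F : S_d \to S_d$ by $F(b) := {}^a b$ (for any $a \in S_d$) and $G(a) := a^b$ (for any $b \in S_d$). By Corollary \ref{dVeron_monoid} the solution $(S_d, r_d)$ is nondegenerate and involutive; nondegeneracy of the induced actions forces $F$ and $G$ to be bijections of $S_d$, and applying condition \textbf{inv} from Remark \ref{rmk:YBE1} to $r_d(a,b) = (F(b), G(a))$ yields $G = F^{-1}$. Therefore $r_d(a,b) = (F(b), F^{-1}(a))$, i.e., $(S_d, r_d)$ is an involutive permutation solution with associated permutation $F$.

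For part (2), when $m = \mathrm{ord}(f)$ divides $d$ one has $f^d = \mathrm{id}_X$, so the permutation $F$ from part (1) is the identity on $S_d$. Thus $r_d(a,b) = (b,a)$, which is the trivial solution on $S_d$. By Proposition \ref{dVeron_monoid2} and Definition-Notation \ref{def:Yrho}, the $d$-Veronese solution $(Y, r_Y)$ is isomorphic to $(S_d, r_d)$, hence $r_Y(y_i, y_j) = (y_j, y_i)$ for all $i, j$. Reading off (\ref{eq:Algebra}) the defining relations of $\cA_Y$ become $\{y_i y_j - y_j y_i \mid 1 \leq i < j \leq N\}$, so $\cA_Y \cong \textbf{k}[y_1, \ldots, y_N]$. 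Finally, Theorem \ref{thm:Veronese_ker} supplies a surjection $v_{n,d} : \cA_Y \twoheadrightarrow \cA^{(d)}$, exhibiting $\cA^{(d)}$ as a quotient of the commutative polynomial ring. The main obstacle will be the inductive verification of the explicit formulas for ${}^a b$ and $a^b$ on all of $S$, in particular tracking how the exponent of $f$ accumulates as word lengths grow; once those formulas are secured, both claims follow from the structure theory already established in Sections \ref{sec:dVeronese} and \ref{sec:Veronesemap}.
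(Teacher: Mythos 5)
Your proposal is correct and follows essentially the same route as the paper: both establish the coordinatewise formulas ${}^a b = f^{|a|}(b_1)\cdots f^{|a|}(b_q)$ and $a^b = f^{-|b|}(a_1)\cdots f^{-|b|}(a_p)$ from \textbf{ML1}--\textbf{ML2}, \textbf{MR1}--\textbf{MR2} and the gradedness of $(S,r_S)$, read off that $(S_d,r_d)$ is a permutation solution, and for $m\mid d$ observe that $f^d=\mathrm{id}_X$ makes $(S_d,r_d)$ trivial, so $\cA_Y\cong \textbf{k}[y_1,\dots,y_N]$ and Theorem \ref{thm:Veronese_ker} exhibits $\cA^{(d)}$ as its quotient. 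The only cosmetic difference is that you recover $G=F^{-1}$ abstractly from involutivity, while the paper writes the permutation $f_d$ and its inverse explicitly.
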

\begin{proof}

(1)
Let $q\geq 2$ be an integer.  The condition ML1 in (\ref{eq:braided_monoid}) implies that
\begin{equation}
\label{eq:perm1}
{}^at =f^{q}(t), \; \text{and}\; t^a =f^{-q}(t)=(f^{-1})^q(t),\quad \text{for all monomials}\; a \in S_q, \;\text{and all}\;\; t \in X.
\end{equation}
Moreover, since $S$ is a graded braided monoid  the monomials $a$, ${}^ba$ and $a^b$ have the same length, therefore

\begin{equation}
\label{eq:perm2}
{}^at = {}^{a^b}{t}=f^{q}(t), \quad t^a=t^{{}^ba}=f^{-q}(t),\; \text{for all }\; a \in S_q, \; b \in S, \;\text{and all}\;\; t \in X.
\end{equation}

It follows then from (\ref{eq:braided_monoid})  ML2  that $S$ acts on itself (on the left and on the right) as automorphisms. In particular, for $a,  t_1t_2\cdots t_d \in S_d$ one has
\begin{equation}
\label{eq:perm3}
\begin{array}{l}
{}^a{(t_1t_2\cdots t_d)}  =  ({}^a{t_1})({}^a{t_2})\cdots ({}^a{t_d}) = f^{d}(t_1) f^{d}(t_2) \cdots  f^{d}(t_d).\\
{(t_1t_2\cdots t_d)}^a  =  ({t_1}^a)({t_2}^a)\cdots ({t_d}^a) = f^{-d}(t_1) f^{-d}(t_2) \cdots  f^{-d}(t_d).
\end{array}
\end{equation}
Therefore $(S_d, r_d)$ is a permutation solution, $(S_d, f_d, r_d)$ where the permutation $f_d \in \Sym(S_d)$ is defined as $f_d(t_2t_2\cdots t_d) := f^{d}(t_1) f^{d}(t_2) \cdots  f^{d}(t_d)$. One has
$f_d^{-1}(t_2t_2\cdots t_d) := f^{-d}(t_1) f^{-d}(t_2) \cdots  f^{-d}(t_d)$.

(2) Assume now that $d=km$ for some integer $k \geq 1$, then $f^d = id_X$. It will be enough to prove that the monomial $d$-Veronese solution $(S_d, r_d)$ is the trivial solution.
  It follows from (\ref{eq:perm3})
that if $a \in S_d$ then
\begin{equation}
\label{eq:perm4}
{}^a{(t_1 t_2\cdots t_d)}  = t_1t_2\cdots t_d,\; \text{where}\;  t_i \in X,  1 \leq i \leq n.
\end{equation}

This implies ${}^ab= b$ for all $a, b \in S_d.$ Similarly, $a^b = a$ for all $a, b \in S_d.$
It follows that $(S_d, r_d)$ is the trivial solution. But the associated $d$-Veronese solution $(Y, r_Y)$ is isomorphic to $(S_d, r_d)$, hence $(Y, r_Y)$ is also a trivial solution, and therefore its Yang-Baxter algebra $\cA(\bf{k}, Y, r_Y)$ is the commutative polynomial ring $\textbf{k}[y_1, y_2, \cdots, y_N ]$.
It follows from Theorem \ref{thm:Veronese_ker} that the
$d$-Veronese subalgebra $\cA^{(d)}$ is isomorphic to the quotient $\textbf{k}[y_1, y_2, \cdots, y_N ]/(\mathfrak{K})$ where $\mathfrak{K}$ is the kernel of the Veronese map  $v_{n,d}$.
\end{proof}

\section{Examples}
\label{sec:examples}
We shall present two examples which illustrates the results of the paper. We use
the notation of the previous sections.

\begin{ex}[]\label{ex:n=3mod}
Let $n=3$, consider the solution $(X, r)$,  where
\[
\begin{array}{ll}
X= \{x_1, x_2, x_3\}, &\\
r(x_3, x_1) =  (x_2, x_3) & r(x_2, x_3) =  (x_3, x_1) \\
r(x_3, x_2) =  (x_1, x_3) & r(x_1, x_3) =  (x_3, x_2) \\
r(x_2, x_1) =  (x_1, x_2) & r(x_1, x_2) =  (x_2, x_1) \\
r(x_i, x_i) =  (x_i, x_i),& 1 \leq i \leq 3.
 \end{array}
 \]

Then
 \[
\begin{array}{l}
A(\textbf{k},X,r) = \textbf{k}\langle X  \rangle /(\Re_{\cA})
\;\;\text{where}\\
\Re_{\cA}
 = \{x_3x_2 -x_1x_3,\;  x_3x_1 -x_2x_3,\;   x_2x_1 -x_1x_2.
\}.
\end{array}
\]
The algebra $A=A(\textbf{k},X,r)$ is a PBW algebra with PBW generators $X=
\{x_1, x_2, x_3\}$, in fact it is a binomial skew-polynomial algebra.
 \end{ex}

 We first give an explicit presentation of the $2$-Veronese $\cA^{(2)}$ in terms
 of generators and quadratic relations.
 In this case $N=\binom{3+1}{2} =6$ and the $2$-Veronese subalgebra $\cA^{(2)}$
 is generated by $\cT_2$, the terms of length $2$ in $\textbf{k}\langle x_1,
 x_2,x_3\rangle.$ These are all normal (modulo $\Re_{\cA}$) monomials  of length $2$ ordered lexicographically:
\begin{equation}
 \label{eq:generatorsVeronese}
\begin{array}{l}
 \cT_2= \{w_1 =x_1x_1,\;  w_2 =x_1x_2 , \; w_3 =x_1x_3, \;w_4 =x_2x_2, \; w_5
 =x_2x_3,\;  w_6 =x_3x_3\}.
 \end{array}
 \end{equation}

Determine the normaized $2$-Veronese solution $(\cT_2, \rho_2)= (\cT_2, \rho)$, where $\rho(a,
b) = (\Nor ({}^ab), \Nor (a^b))$. An explicit description of $\rho$
is given below:
\begin{equation}
\label{eq:nontrivialorbits2}
\begin{array}{ll}
(x_3x_3,  w_i) \longleftrightarrow (w_i,  x_3x_3), & 1 \leq i \leq  5 \\
(x_2x_3, x_2x_3) \longleftrightarrow (x_1x_3, x_1x_3), & (x_2x_3, x_2x_2)
\longleftrightarrow (x_1x_1, x_2x_3),\\
(x_2x_3, x_1x_2)\longleftrightarrow (x_1x_2, x_2x_3),  &   (x_2x_3, x_1x_1)
\longleftrightarrow (x_2x_2, x_2x_3), \\
(x_2x_2, x_1x_3) \longleftrightarrow (x_1x_3, x_1x_1),  &   (x_2x_2,
x_1x_2)\longleftrightarrow (x_1x_2, x_2x_2),\\
 (x_2x_2, x_1x_1)\longleftrightarrow (x_1x_1, x_2x_2),  &(x_1x_3,
 x_2x_2)\longleftrightarrow (x_1x_1, x_1x_3), \\
 (x_1x_3, x_1x_2)\longleftrightarrow (x_1x_2, x_1x_3),  &  (x_1x_2, x_1x_1)
 \longleftrightarrow  (x_1x_1, x_1x_2).\\
 \end{array}
 \end{equation}

 \emph{The fixed points} $ \Fcal= \Fcal (\cT_2, \rho_2)$  are the monomials $ab$
 determined by the one-element orbits of $\rho$, one has $(a,b) = ({}^ab, a^b)$.
 There are exactly $6$ fixed points:
\begin{equation}
 \label{eq:fixedpts11}
\begin{array}{ll}
 \Fcal=& \{w_1w_1 =(x_1x_1)(x_1x_1)\in \cT_4, \; w_4w_4 =(x_2x_2)(x_2x_2)\in
 \cT_4,\; w_6w_6 =(x_3x_3) (x_3x_3) \in \cT_4,\\
           &w_2w_2 =(x_1x_2)(x_1x_2)\notin \cT_4, \;w_3w_5
           =(x_1x_3)(x_2x_3)\notin \cT_4,\; w_5w_3 =(x_2x_3)(x_1x_3)\notin \cT_4.
           \}.
 \end{array}
 \end{equation}

There are exactly $15= \binom{N}{2}$ nontrivial $\rho$-orbits in $\cT_2 \times
\cT_2$ determined by  (\ref{eq:nontrivialorbits2}).
 These orbits imply the following equalities in
 $\cA^{(2)}$:

 \begin{equation}
 \label{eq:relations11}
\begin{array}{ll}
(x_3x_3)w_i=w_i (x_3x_3) \in \cT_4,1 \leq i \leq  5, & \\
(x_2x_3) (x_2x_3) = (x_1x_3)(x_1x_3)\notin \cT_4, &(x_2x_3)(x_2x_2) =
(x_1x_1)(x_2x_3)\in \cT_4,\\
(x_2x_3) (x_1x_2)= (x_1x_2)(x_2x_3)\in \cT_4, & (x_2x_3)( x_1x_1) = (x_2x_2,
x_2x_3)\in \cT_4,\\
 (x_2x_2)(x_1x_3) = (x_1x_3)( x_1x_1)\notin \cT_4, &(x_2x_2)(x_1x_2)=
 (x_1x_2)(x_2x_2)\in \cT_4,\\
(x_2x_2) (x_1x_1)= (x_1x_1) (x_2x_2)\in \cT_4,      & (x_1x_3)(
x_2x_2)=(x_1x_1)(x_1x_3)\in \cT_4,\\
 (x_1x_3)(x_1x_2)=(x_1x_2)(x_1x_3)\notin \cT_4, &(x_1x_2)(x_1x_1) =
 (x_1x_1)(x_1x_2)\in \cT_4.
 \end{array}
 \end{equation}
Note that for every pair $(w_i,w_j) \in \cT_2\times \cT_2 \setminus \Fcal$ the
monomial $w_iw_j$ occurs exactly once in ( \ref{eq:relations11}) .

 Six additional quadratic relations of $\cA^{(2)}$ arise  from
 (\ref{eq:fixedpts11}), (\ref{eq:relations11}), and the obvious equality
$a = \Nor(a)\in \cT$, which hold in $\cA^{(2)}$ for every $a \in X^2$. In
this case we simply pick up all monomials which occur in (\ref{eq:fixedpts11}),
or (\ref{eq:relations11})  but are not in $\cT_4$ and equalize each of them with its normal form. This way we get the six relations which determine
$\cR_{b}$:
 \begin{equation}
 \label{eq:relations2}
\begin{array}{lll}
 (x_1x_2)(x_1x_2) = (x_1x_1)(x_2x_2), & (x_1x_3)(x_2x_3)=(x_1x_1)(x_3x_3),
 &(x_2x_3)(x_1x_3)=(x_2x_2)(x_3x_3)\\
(x_1x_3)(x_1x_3)= (x_1x_2)(x_3x_3),   &(x_2x_2)(x_1x_3) = (x_1x_2)(x_2x_3),
&(x_1x_2)(x_1x_3)= (x_1x_1)(x_2x_3).
 \end{array}
 \end{equation}

The 2-Veronese algebra $\cA^{(2)}$ has 6 generators $w_1, \cdots, w_6$ written
explicitly in  (\ref{eq:generatorsVeronese})
and  a set of 21 relations presented as
a disjoint union $\cR= \cR_{a} \bigcup\cR_{b}$ described below.

(1) The relations $\cR_{a}$ are:
\begin{equation}
 \label{eq:relationsRa}
\begin{array}{ll}
w_6w_i-w_i w_6,  \;w_i w_6\in \cT_4,\; 1 \leq i \leq  5, &  \\
w_5w_5- w_3w_3,  \; w_3w_3\notin \cT_4, &w_5w_4-w_1w_5,  \;w_1w_5\in \cT_4,\\
w_5w_2 -w_2w_5,  \; w_2w_5 \in \cT_4,  &w_5w_1-w_4w_5,  \;w_4w_5\in \cT_4, \\
 w_4w_3 -w_3w_1,  \; w_3w_1  \notin \cT_4,  & w_4w_2-w_2w_4,  \;w_2w_4 \in
 \cT_4,\\
 w_4w_1- w_1w_4,  \; w_1w_4\in \cT_4,  &w_3w_4 -w_1w_3,  \; w_1w_3  \in \cT_4,
 \\
 w_3w_2 - w_2w_3,  \; w_2w_3\notin \cT_4, &  w_2w_1 -w_1w_2,  \;w_1w_2 \in
 \cT_4.\\
 \end{array}
 \end{equation}

(2) The relations $\cR_{b}$ are:
\begin{equation}
 \label{eq:relationsRb}
\begin{array}{lll}
 w_2w_2- w_1w_4,\quad &w_3w_5 - w_1w_6,\quad& w_5w_3- w_4w_6, \\
 w_3w_3  - w_2w_6 ,        &w_3w_1 - w_2w_5,          &w_2w_3-   w_1w_5.
 \end{array}
 \end{equation}
The elements of $\cR_{b}$ correspond to the generators of the kernel of the
Veronese map.

(1a) The relations $\cR_{a1}$ are:
\begin{equation}
 \label{eq:relationsRa1}
\begin{array}{ll}
w_6w_i-w_i w_6, \; w_i w_6 \in \cT_4,\; 1 \leq i \leq  5, &  \\
w_5w_5- w_2w_6, \;  w_2w_6  \in \cT_4, &w_5w_4-w_1w_5, \; w_1w_5\in \cT_4,\\
w_5w_2 -w_2w_5, \; w_2w_5 \in \cT_4,  &w_5w_1-w_4w_5, \; w_4w_5\in \cT_4, \\
 w_4w_3 -  w_2w_5, \: w_2w_5\in \cT_4,  & w_4w_2-w_2w_4, \; \in \cT_4,\\
 w_4w_1- w_1w_4, \; w_1w_4\in \cT_4,  &w_3w_4 -w_1w_3, \;  w_1w_3\in \cT_4, \\
 w_3w_2 - w_1w_5, \; w_1w_5\in \cT_4, &  w_2w_1 =w_1w_2, \; w_1w_2\in \cT_4.\\
 \end{array}
 \end{equation}

Thus the $2$-Veronese $\cA^{(2)}$ of the algebra $\cA$  is a quadratic algebra
presented as
\[ \cA^{(2)} \simeq  \textbf{k}\langle w_1, \cdots, w_6 \rangle /(\cR) \simeq
\textbf{k}\langle w_1, \cdots, w_6 \rangle /(\cR_1),
\]
where , $\cR = \cR_{a}\cup \cR_{b}$, and $\cR_1 = \cR_{a1}\cup \cR_{b}$.

\emph{The $2$-Veronese subalgebra $\cA^{(2)}$ in our example is a PBW algebra}.

The associated $2$-Veronese solution of YBE $(Y, r_Y)$ can be found
straightforwardly: one has $Y = \{y_1, y_2, y_3, y_4, y_5, y_6\}$ and $r_Y(y_i,
y_j) = (y_k, y_l)$ \emph{iff}  $\rho_2(w_i, w_j) = (w_k, w_l)$, $1 \leq i, j, k,
l \leq 6$. The solution $(Y, r_Y)$ is nondegenerate and involutiove, but it is
not a square-free solution.
The corresponding Yang-Baxter algebra is
\[\cA_Y = (\textbf{k}, Y, r_Y)= \textbf{k}\langle y_1, y_2, y_3, y_4, y_5,
y_6\rangle /(\Re), \]
where $\Re$ is the set of quadratic relations given below:
\begin{equation}
 \label{eq:relationsAY}
\begin{array}{lll}
y_6y_i-y_i y_6,& 1 \leq i \leq  5, &y_5y_5- y_3y_3,  \\
y_5y_4-y_1y_5, &y_5y_1-y_4y_5,     & y_5y_2 -y_2y_5  \\
 y_4y_3 -y_3y_1, &y_4y_2-y_2y_4, &y_4y_1- y_1y_4,\\
 y_3y_4 -y_1y_3, &y_3y_2 - y_2y_3, &y_2y_1 =y_1y_2.\\
 \end{array}
 \end{equation}
\emph{Note that $\Re$ is not a Gr\"{o}bner basis of the ideal $(\Re)$ (w.r.t.
the degree-lexicographic ordering on $\langle Y \rangle$)}. For example,
the overlap $y_5y_4y_3$ implies the new relation $y_5y_3y_1-y_1y_5y_3$ which is
in the ideal $(\Re)$ but can not be reduced using (\ref{eq:relationsAY}). There
are more such overlaps.

 The Veronese map,
\[v_{n,2} : \cA_Y \rightarrow  \cA_X\]
is the algebra homomorphism extending the assignment $y_1 \mapsto w_1, \ y_2
\mapsto w_2, \ \dotsc ,\  y_6  \mapsto w_6.$ Its image is the $2$-Veronese, $\cA^{(2)}$.
The kernel $\mathfrak{K}$ of the map  $v_{n,2}$ is generated by the set
$\Re_1$   of polynomials given below:
\begin{equation}
 \label{eq:kernelveronese_ex}
\begin{array}{lll}
 y_2y_2-y_1y_4, &y_3y_5 - y_1y_6, &y_5y_3-y_4y_6\\
y_3y_3-y_2y_6,  &y_4y_3 -y_2y_5, &y_2y_3-y_1y_5.
 \end{array}
 \end{equation}

Denote by $J$ the two-sided ideal  $J= (\Re\cup \Re_1)$ of $ \textbf{k}\langle Y
\rangle$. A direct computation shows that the set
$\Re \cup \Re_1$ is a (quadratic) Gr\"{o}bner basis of $J$. The $2$-
Veronese subalgebra $\cA^{(2)}$ of $\cA=\cA_X$ is isomorphic to
the quotient $\textbf{k}\langle Y \rangle/ J,$ hence it is a PBW algebra.

\begin{ex}[]\label{ex:n=2}
Let $n=2$, consider the solution $(X, r)$,  where
\[
\begin{array}{ll}
X= \{x_1, x_2\}, &\\
r(x_2, x_2) =  (x_1, x_1) & r(x_1, x_1) =  (x_2, x_2) \\
r(x_2, x_1) =  (x_2, x_1) & r(x_1, x_2) =  (x_1, x_2) .
\end{array}
 \]
This is a permutation solution $(X, f, r)$, where $f$ is the transposition $f=(x_1 x_2)$.
One has
 \[
\begin{array}{l}
A(\textbf{k},X,r) = \textbf{k}\langle x_1, x_2 \rangle /(\Re_{\cA})
\;\;\text{where}\\
\Re_{\cA}
 = \{x_2x_2 -x_1x_1\}.
\end{array}
\]
\end{ex}
The set $\Re_{\cA}$ is not a Gr\"{o}bner basis of $I= (\Re_{\cA})$ with respect to the deg-lex ordering induced by any
of the choices $x_1 < x_2,$ or $x_2< x_1$.
We keep the convention $x_1 < x_2$ and apply standard computation to find that the reduced
Gr\"{o}bner basis  of $I$ (with respect to the deg-lex ordering) is
\[G = \{f_1 = x_2x_2 -x_1x_1, f_2 = x_2x_1x_1 -x_1x_1 x_2\}.\]
Then
\[
\cN = \cN(I)= \{x_1^{\alpha} (x_2x_1)^{\beta}x_2^{\varepsilon}\mid \varepsilon
\in\{0, 1\} \; \text{and}\;  \alpha, \beta \in  \N_0   \}.
\]

It is easy to find an explicit presentation of the $2$-Veronese $\cA^{(2)}$ in terms
of generators and quadratic relations.
 $\cA^{(2)}$  is generated by the set of normal monomials of length $2$:
 \[\cN_2 = \{w_1 =x_1x_1, w_2= x_1x_2, w_3= x_2x_1\}.\]

One has
\[
\cN_4 = \{x_1^{4}, \; x_1^3x_2,\;  x_1^2x_2x_1, \; x_1x_2x_1x_2, \;x_2x_1x_2x_1
\}.
\]

Next we determine the normalised $d$-Veronese solution $(\cN_2, \rho)$, where
$\rho(a, b) = (\Nor ({}^ab), \Nor (a^b))$.
One has
\[{}^{(x_ix_j)}x_k = x_k, \quad x_k^{(x_ix_j)} = x_k,\quad \text{for all}\; i, j,
k \in \{1, 2\} \]
\[{}^{w_i}{w_j}=w_j, \; w_j^{w_i}=w_j, \; \forall i, j \in \{1, 2, 3\}.\]

Thus $(\cN_2, \rho)$ is the trivial solution on the set $\cN_2$:
\[\rho(w_j, w_i) = (w_i, w_j), 1 \leq i,j \leq 3.\]
In this case the three  fixed points are normal monomials:
 \[
\begin{array}{ll}
 \Fcal=& \{w_1w_1 =(x_1x_1)(x_1x_1)\in \cN_4, \; w_2w_2 =(x_1x_2)(x_1x_2)\in
 \cN_4,\; w_3w_3 =(x_2x_1) (x_2x_1) \in \cN_4  \}.
 \end{array}
 \]

The set of relations is $\cR = \cR_a \cup \cR_b$.
Here $\cR_a$ consists of the relations:
\begin{equation}
\begin{array}{lcl}
g_{32}= w_3w_2-w_2w_3,   & \text{equivalently}, &
(x_2x_1)(x_1x_2)=(x_1x_2)(x_2x_1) \notin \cN_4\\
g_{31}= w_3w_1-w_1w_3,  &\text{equivalently}, & (x_2x_1)(x_1x_1)=(x_1x_1)(x_2x_1)
\in \cN_4\\
g_{21}  = w_2w_1-w_1w_2,  & \text{equivalently},&
(x_1x_2)(x_1x_1)=(x_1x_1)(x_1x_2) \in \cN_4.
\end{array}
\end{equation}
There is only one relation in $\cR_b$, it gives the "normalisation" of
$w_2w_3=(x_1x_2)(x_2x_1) \notin \cN_4$.
One has
\[
\Nor (w_2w_3)=\Nor (x_1x_2)(x_2x_1) = \Nor (x_1(x_2x_2)x_1) = x_1x_1x_1x_1
=w_1w_1,
\]
hence
\begin{equation}
\label{eq:R_b2}
\cR_b=\{g_{23}= w_2w_3 - w_1w_1\}.
\end{equation}
It follows that
\[
\begin{array}{l}
\cA^{(2)} \simeq  \textbf{k}\langle w_1, w_2, w_3 \rangle /(\cR)
\;\text{where}\\
\cR = \{w_3w_2-w_2w_3, \; w_3w_1 - w_1w_3,\; w_2w_1-w_1w_2,\; w_2w_3 - w_1w_1 \}.
\end{array}
\]
In our notation the second set $\cR_1$ consisting of equivalent relation is:
\[\cR_1 = \{w_3w_2-w_1w_1, \; w_3w_1 - w_1w_3,\; w_2w_1-w_1w_2,\; w_2w_3 -
w_1w_1\},\]
and $
\cA^{(2)} \simeq  \textbf{k}\langle w_1, w_2, w_3 \rangle /(\cR_1).
$
It is easy to see that that the set $\cR$ is a (minimal) Gr\"{o}bner basis of the two
sided ideal $I = (\cR)$ of $\textbf{k}\langle w_1, w_2 w_3 \rangle$,
 w.r.t. the degree-lexicographic order on $\langle w_1, w_2, w_3\rangle$,
while the set $\cR_1$ is the reduced Gr\"{o}bner basis  of the ideal $I$.
Thus the $2$-Veronese subalgebra $\cA^{(2)}$ in this example is
a PBW algebra.
As expected, the $2$-Veronese $\cA^{(2)}$ is a commutative algebra isomorphic to $
\textbf{k}[w_1, w_2, w_3]/( w_2w_3 - w_1w_1)$.

The associated $2$-Veronese solution of YBE $(Y, r_Y)$ is the trivial solution
on the set $Y = \{y_1, y_2, y_3\}$,
$r_Y(y_j, y_i) = (y_i, y_j)$ for all $1 \leq i, j \leq 3$.
The corresponding Yang-Baxter algebra $\cA_Y$ is
\[\cA_Y = (\textbf{k}, Y, r_Y)= \textbf{k}\langle y_1, y_2, y_3\rangle
/(y_3y_2-y_2y_3,  y_3y_1-y_1y_3, y_2y_1-y_1y_2) \simeq \textbf{k}[ y_1, y_2,
y_3].\]
Obviously, $\cA_Y$ is PBW.
The Veronese map,
\[v_{n,2} : \cA_Y \rightarrow  \cA_X\]
is the algebra homomorphism extending the assignment $y_1 \mapsto w_1, \ y_2
\mapsto w_2, y_3  \mapsto w_3.$
Its image is the $d$-Veronese $\cA^{(2)}$, and its kernel $\mathfrak{K}$ is generated by the
polynomial $y_2y_3 - y_1y_1.$

\end{document}